\newtheorem{thm}{Theorem}[section]
\newtheorem{cor}[thm]{Corollary}
\newtheorem{lem}[thm]{Lemma}
\newtheorem{prop}[thm]{Proposition}
\theoremstyle{definition}
\newtheorem{definition}[thm]{Definition}
\theoremstyle{remark}
\newtheorem{rem}[thm]{Remark}
\numberwithin{equation}{section}
\newcommand{\ai}{\sqrt{-1}} 
\newcommand{\isom}{\stackrel{\sim}{\to}} 
\newcommand{\rk}{\mathrm{rk}}
\newcommand{\tr}{\mathrm{tr}}
\newcommand{\Id}{\mathrm{Id}}
\newcommand{\surj}{\to\kern-1.8ex\to}
\newcommand{\vertiii}[1]{{\left\vert\kern-0.25ex\left\vert\kern-0.25ex\left\vert #1 
    \right\vert\kern-0.25ex\right\vert\kern-0.25ex\right\vert}}
\begin{document}
\title[Quot-scheme limit and its applications]{Quot-scheme limit of Fubini--Study metrics and its applications to balanced metrics} 
\author{Yoshinori Hashimoto} 
\address{Department of Mathematics, Tokyo Institute of Technology, 2-12-1 Ookayama, Meguro-ku, Tokyo, 152-8551, Japan.}
\email{hashimoto@math.titech.ac.jp}

\author{Julien Keller}
\address{D\'epartement de Math\'ematiques, Universit\'e du Qu\'ebec \`a Montr\'eal (UQ\`AM), C.P. 8888, Succ. Centre-Ville, Montr\'eal (Qu\'ebec) H3C 3P8 Canada.}
\email{julien.keller@math.cnrs.fr}

\begin{abstract}
	We present some results that complement our prequels \cite{HK1,HK2} on holomorphic vector bundles. We apply the method of the Quot-scheme limit of Fubini--Study metrics developed therein to provide a generalisation to the singular case of the result originally obtained by X.W.~Wang for the smooth case, which states that the existence of balanced metrics is equivalent to the Gieseker stability of the vector bundle. We also prove that the Bergman 1-parameter subgroups form subgeodesics in the space of hermitian metrics. This paper also contains a review of techniques developed in \cite{HK1,HK2} and how they correspond to their counterparts developed in the study of the Yau--Tian--Donaldson conjecture.
\end{abstract}


\maketitle

\section{Introduction}

The theorem due to Donaldson \cite{Do-1983,Don1985,Don1987} and Uhlenbeck--Yau \cite{U-Y} states that a holomorphic vector bundle over a smooth complex projective variety admits a Hermitian--Einstein metric if the bundle is slope stable. Together with the theorem by Kobayashi \cite{Kobookjp} and L\"ubke \cite{Luebke}, it follows that the vector bundle admits a Hermitian--Einstein metric if and only if it is slope stable. This is an important theorem in complex geometry that provides an important link between differential and algebraic geometry, which was proved by using deep analytic results in \cite{Do-1983,Don1985,Don1987,U-Y}.

In the papers \cite{HK1,HK2} we discussed attempts at establishing a more direct link between the Hermitian--Einstein metrics and slope stability, from the point of view of computing the asymptotic slope of the appropriate energy functional that arises in the variational formulation of the problem. Following these works, the purpose of this paper is twofold: one is to prove some results (Sections \ref{scbg1pss} and \ref{scgstbm}) that complement the prequels \cite{HK1,HK2}, and the other is to provide a summary (Section \ref{scrmrhk1}) of what is developed in \cite{HK1,HK2} and compare it with the analogous ideas in the study of Yau--Tian--Donaldson conjecture, i.e.~the case of varieties (Section \ref{sccpcytd}). More detailed content of this paper is summarised below.

\bigskip

\noindent {\bf Organisation of the paper.} In Section \ref{scrmrhk1}, we survey the methods developed in \cite{HK1} and the results proved in \cite{HK1,HK2}. We then present how these methods can be regarded as a vector bundle version of the ideas proposed in the study of constant scalar curvature K\"ahler and K\"ahler--Einstein metrics in Section \ref{sccpcytd}; we also give brief comments on the Deligne pairing in Section \ref{sccmdlp}, which plays an important role in the previous work by Boucksom--Hisamoto--Jonsson \cite{BHJ2} and Phong--Ross--Sturm \cite{PRS} for the case of varieties but not in our papers \cite{HK1,HK2}, by discussing its relationship to the Bott--Chern class. After showing that the Bergman 1-parameter subgroups do indeed define subgeodesics in Section \ref{scbg1pss}, as expected from the case of varieties, we prove in Section \ref{scgstbm} that the method of the Quot-scheme limit of Fubini--Study metrics provides a variational characterisation of the Gieseker stability, generalising the results originally proved by X.W.~Wang \cite{Wang1} and later by Phong--Sturm \cite{P-S03} to the case of torsion-free sheaves over a $\mathbb{Q}$-Gorenstein log terminal projective variety. Finally in Section \ref{Section-effective}, we provide some effective results and numerical aspects of the Donaldson functional.

\medskip

\noindent {\bf Acknowledgements.} Both authors thank Yuji Odaka for helpful discussions. The first author is partially supported by JSPS KAKENHI (Grant-in-Aid for Early-Career Scientists), Grant Number JP19K14524. The second author is supported by an NSERC Discovery Grant.

\tableofcontents

\noindent \textbf{Notation.} Throughout in this paper (except for Remarks \ref{rmsgvar}, \ref{rmsgdsgvr}, and Section \ref{scgstbm}), $X$ stands for a smooth complex projective variety, and $\mathcal{E}$ stands for a holomorphic vector bundle of rank $r$ over $X$, and we write $\mathcal{E} (k)$ for $\mathcal{E} \otimes \mathcal{O}_X (k)$ with a very ample line bundle $\mathcal{O}_X (1)$; we shall write $\mathcal{O}_X(k)$ for $\mathcal{O}_X (1)^{\otimes k}$ ($k \in \mathbb{Z}$). We assume, just in order to simplify the exposition, that $\mathcal{E}$ does not split into a direct sum of holomorphic subbundles (i.e.~$\mathcal{E}$ is irreducible), noting that the reducible case can be treated similarly by considering each irreducible component.

We fix a K\"ahler metric $\omega$ on $X$ once and for all, in the K\"ahler class $c_1 (L)$ where $L = \mathcal{O}_X (1)$. We shall also write $\mathrm{Vol}_L$ for the volume $\int_X \omega^n/n!$ of $X$ with respect to $\omega$.

In Section \ref{scgstbm}, we treat the case when $X$ is a singular variety, and its singular locus is denoted by $\mathrm{Sing} (X)$. Likewise, the singular locus of a coherent sheaf $\mathcal{E}$ is denoted by $\mathrm{Sing} (\mathcal{E})$. In this paper, the \textit{regular locus} $X^{reg} \subset X$ denotes a further subset of $X \setminus (\mathrm{Sing} (X) \cup \mathrm{Sing} (\mathcal{E}))$ as given in Definition \ref{dfxreg}. We may also write $\mathrm{Sing} (X , \mathcal{E})$ for $\mathrm{Sing} (X) \cup \mathrm{Sing} (\mathcal{E})$.

\section{Summary of the methods developed in \cite{HK1,HK2}} \label{scrmrhk1}

\subsection{Fubini--Study metrics}

We start by recalling the classical Fubini--Study metrics for vector bundles. The reference is \cite{Wang1} or \cite[Chapter 5]{M-M}.

Since $\mathcal{O}_X(1)$ is very ample, the vector bundle $\mathcal{E} (k)$ is globally generated for all large enough $k$, so that the sheaf surjection
\begin{equation} \label{dfrhoss}
	\rho : H^0 (X , \mathcal{E} (k)) \otimes \mathcal{O}_X (-k) \to \mathcal{E} ,
\end{equation}
defined by evaluation at each point, is surjective. In what follows, to streamline the exposition, we shall further assume $k > \mathrm{reg} (\mathcal{E})$, where $\mathrm{reg} (\mathcal{E})$ is an integer called the \textbf{Castelnuovo--Mumford regularity} defined by
\begin{equation*}
	\mathrm{reg} (\mathcal{E}) := \inf_{k \in \mathbb{Z}} \{ \mathcal{E} \text{ is $k$-regular.}  \}
\end{equation*}
where we recall that $\mathcal{E}$ is $k$-regular if $H^i (X , \mathcal{E}  (k-i)) = 0$ for all $i > 0$ (the existence of the regularity number is justified by Serre vanishing theorem).

The sheaf surjection (\ref{dfrhoss}) implies that we have a holomorphic map
\begin{equation} \label{dfgrembkdr}
	\Phi : X \to \mathrm{Gr} (r , H^0 (X , \mathcal{E} (k))^{\vee})
\end{equation}
to the Grassmannian of $r$-planes in $H^0 (X , \mathcal{E} (k))^{\vee}$, such that the universal bundle $\mathcal{U}$ over the Grassmannian (i.e.~the dual of the tautological bundle) is pulled back by $\Phi$ to $\mathcal{E} (k)$.

A positive definite hermitian form $H$ on the vector space $H^0 (X , \mathcal{E} (k))^{\vee}$ naturally defines a hermitian metric on the universal bundle $\mathcal{U}$ over the Grassmannian. Pulling this back by $\Phi$ we get a hermitian metric on $\mathcal{E} (k)$; fixing a reference hermitian metric on $\mathcal{O}_X(1)$, this means that we get a hermitian metric on $\mathcal{E}$. The metric thus constructed is called the \textbf{$k$-th Fubini--Study metric} on $\mathcal{E}$ defined by the hermitian form $H$; note that this precisely agrees with the classical Fubini--Study metrics induced by the embedding to the projective space when $\mathcal{E}$ is a line bundle.

The construction above can also be described as follows. Note first that any positive definite hermitian form $H^0 (X , \mathcal{E} (k))^{\vee}$ can be written, up to an overall constant multiple which does not play an important role in this paper, as $\sigma^* \sigma$ for some $\sigma \in SL(H^0 (X , \mathcal{E} (k))^{\vee})$. Then, as pointed out by X.W.~Wang \cite[Remark 3.5]{Wang1}, there exists a $C^{\infty}$-map
\begin{equation} \label{dfsmmpq}
	Q : \mathcal{E} \to H^0 (X , \mathcal{E} (k)) \otimes C^{\infty}_X (-k),
\end{equation}
where $C^{\infty}_X (-k)$ is the sheaf of smooth sections of $\mathcal{O}_X (-k)$, such that the $k$-th Fubini--Study metric $h_{\sigma}$ defined by $\sigma^* \sigma$ can be written as 
\begin{equation} \label{dffsmsgm}
	h_{\sigma} = Q^* \sigma^* \sigma Q,
\end{equation}
where $Q^*$ is the formal adjoint of $Q$ with respect to some fixed reference hermitian metrics. More details can be found in \cite[Remark 3.5]{Wang1} or \cite[Section 1.3]{HK1}; note further that, although they make use of an $L^2$-orthonormal basis for $H^0 (X , \mathcal{E} (k))$, this can be replaced by any basis as proved in \cite[Theorem 5.1.16]{M-M}.

We write $\mathcal{H}_k$ for the set of $k$-th Fubini--Study metrics. Noting that the map $SL(H^0 (X , \mathcal{E} (k))^{\vee}) \ni \sigma \mapsto h_{\sigma} = Q^* \sigma^* \sigma Q \in \mathcal{H}_k$ factors through $SL(H^0 (X , \mathcal{E} (k))^{\vee}) / SU (N_k)$ where $N_k = \dim H^0 (X , \mathcal{E} (k))$, we find that $\mathcal{H}_k$ is parametrised by the homogeneous manifold $SL(H^0 (X , \mathcal{E} (k))^{\vee}) / SU (N_k)$.

\subsection{Quot-scheme limit of Fubini--Study metrics} \label{revqslimfs}
We recall some key concepts from \cite{HK1} that we need in what follows, which the reader is referred to for more details on this section. 

Recalling the description of the Fubini--Study metric $h_{\sigma}$ as in (\ref{dffsmsgm}) by using the map $Q$ as defined in (\ref{dfsmmpq}), we naturally get a family $\{ h_{\sigma_t} \}_{t \ge 0}$ of Fubini--Study metrics defined by a 1-parameter subgroup (1-PS) $\{ \sigma_t \}_{t \ge 0} \subset SL(H^0 (X , \mathcal{E} (k))^{\vee})$ as 
\begin{equation*}
	h_{\sigma_t} := Q^* \sigma_t^* \sigma_t Q.
\end{equation*}
Assuming as we may that $\sigma_t$ is generated by $\zeta \in \mathfrak{sl} (H^0 (X , \mathcal{E}(k))^{\vee})$ as $\sigma_t = e^{\zeta t}$, we call the above $\{ h_{\sigma_t} \}_{t \ge 0}$ the \textbf{Bergman 1-PS} generated by $\zeta \in \mathfrak{sl} (H^0 (X , \mathcal{E}(k))^{\vee})$; further, when $\zeta$ has rational eigenvalues, it is called the \textbf{rational Bergman 1-PS}.

The main technique developed in \cite{HK1} is to evaluate the limit of $h_{\sigma_t}$ as $t \to +\infty$ for $\zeta \in \mathfrak{sl} (H^0 (X, \mathcal{E}(k))^{\vee})$ with rational eigenvalues, in terms of the Quot-scheme limit. We give a quick summary of it below.

Suppose that $\zeta \in \mathfrak{sl} (H^0 (X,\mathcal{E}(k))^{\vee})$ has eigenvalues $w_1 , \dots , w_{\nu} \in \mathbb{Q}$, with the ordering
\begin{equation} \label{ordlambda}
	w_1 > \cdots > w_{\nu} .
\end{equation}
We consider the action of $\zeta$ on $H^0(X,\mathcal{E}(k))$ which is not the natural dual action, but the one that is natural with respect to certain metric duals (see \cite[(2.6)]{HK1} and the discussion that follows). In any case, such an action gives us the weight decomposition
\begin{equation*}
 	H^0 (X, \mathcal{E}(k)) = \bigoplus_{i = 1}^{\nu} V_{-w_i , k}
 \end{equation*}
where $\zeta$ acts on $V_{-w_i , k}$ via the $\mathbb{C}^*$-action $T: \mathbb{C}^* \curvearrowright V_{-w_i , k}$ defined by $T \mapsto T^{-w_i}$ (cf.~\cite[Section 2.1]{HK1}); here we introduced an auxiliary variable $T$ by $T := e^{-t}$, so that the limit $t \to + \infty$ corresponds to $T \to 0$. The above decomposition naturally leads to the filtration
\begin{equation}\label{VT'}
V_{ \le -w_i , k} := \bigoplus_{j= 1}^{i} V_{-w_j, k},
\end{equation}
of $H^0 (X, \mathcal{E}(k)) $ by its vector subspaces.

Recalling the sheaf surjection (\ref{dfrhoss}), the filtration \eqref{VT'} also gives rise to the one
\begin{equation} \label{filtevs}
	0 \neq \mathcal{E}_{\le -w_{1}} \subset \cdots \subset \mathcal{E}_{\le -w_{\nu}} = \mathcal{E}
\end{equation}
of $\mathcal{E}$ by subsheaves, where $\mathcal{E}_{\le - w_i}$ is a coherent subsheaf of $\mathcal{E}$ defined by the quotient map
\begin{equation*}
	\rho_{\le - w_i} : V_{\le - w_i ,k} \otimes \mathcal{O}_X(-k) \to \mathcal{E}_{\le - w_i}
\end{equation*}
induced from $\rho$ as defined in (\ref{dfrhoss}). As in \cite[Lemma 2.5]{HK1}, we can modify this filtration on a Zariski closed subset of $X$, to get a filtration
\begin{equation} \label{satfiltevs}
	0 \neq \mathcal{E}'_{\le -w_{1}} \subset \cdots \subset \mathcal{E}'_{\le - w_{\nu}} = \mathcal{E}
\end{equation}
of $\mathcal{E}$ by saturated subsheaves. We say that a filtration is trivial if it is equal to $0 \subsetneq \mathcal{E}$.

\begin{rem} \label{rmetrlev}
	When the eigenvalues $w_1 , \dots , w_{\nu}$ of $\zeta$ are only real, as opposed to rational, exactly the same argument applies so as to get the filtration (\ref{satfiltevs}), with the only difference being that the grading of the filtration is given by real numbers.
\end{rem}

In describing the limit of the Bergman 1-PS $\{ h_{\sigma_t} \}_{t \ge 0}$, only a certain subset of the subsheaves in (\ref{satfiltevs}) matters, in the sense that we only need to consider the subsheaves in (\ref{satfiltevs}) such that the associated graded sheaf has a nontrivial rank. More precisely, following \cite[Definition 2.2]{HK1}, we can pick a certain subset
\begin{equation} \label{defsubswgt}
\{ w_{\alpha} \}_{\alpha =\hat{1}}^{\hat{\nu}} \subset \{ w_i \}_{i=1}^{\nu}	
\end{equation}
with $\{ \hat{1} , \dots , \hat{\nu} \} \subset \{ 1 , \dots , \nu \}$, by means of the Quot-scheme limit as explained below; what (\ref{defsubswgt}) precisely means is that the subscript $\alpha$ runs over a subset $\{ \hat{1} , \dots , \hat{\nu} \}$ of $\{ 1 , \dots , \nu \}$, with the ordering given by $\hat{1} < \hat{2} < \dots < \hat{\nu}$.

We recall the quotient map (\ref{dfrhoss})
\begin{equation*}
	\rho : H^0(X, \mathcal{E}(k)) \otimes \mathcal{O}_X (-k) \to \mathcal{E}
\end{equation*}
for $\mathcal{E}$, and note that its $\mathbb{C}^*$-orbit defined by $\zeta \in \mathfrak{sl} (H^0 (X,\mathcal{E}(k))^{\vee})$ can be written as
\begin{equation*}
	\rho_T := \rho \circ T^{\zeta} : H^0(X, \mathcal{E}(k)) \otimes \mathcal{O}_X (-k) \to \mathcal{E}.
\end{equation*}
We then consider a coherent sheaf defined as
\begin{equation*}
	\hat{\rho} : H^0(X, \mathcal{E}(k)) \otimes \mathcal{O}_X (-k) \to \bigoplus_{i=1}^{\nu} \mathcal{E}_{-w_i}
\end{equation*}
with
\begin{equation} \label{dfewigr}
	\mathcal{E}_{-w_i} : = \mathcal{E}_{\le -w_i} / \mathcal{E}_{\le -w_{i-1}}.
\end{equation}
It is well-known that $\hat{\rho}$ defined as above is equal to limit of $\rho_T$ in the Quot-scheme under the $\mathbb{C}^*$-action $T^{\zeta}$ \cite[Lemma 4.4.3]{H-L}. The subset $\{ \hat{1} , \dots , \hat{\nu} \}$ in (\ref{defsubswgt}) consists of the indices $i$ such that $\mathrm{rk} (\mathcal{E}_{-w_i}) >0$  (see also \cite[Definition 2.2]{HK1}). It turns out that $\hat{1} = 1$ (see \cite[Remark 2.3]{HK1}), and the reader is referred to \cite[Section 2]{HK1} for more details.


We shall often argue over a certain Zariski open subset of $X$ as defined below.

\begin{definition} \label{dfxreg}
	We define $X^{reg}$ be the Zariski open subset of $X$ over which the sheaves $\mathcal{E}_{\le -w_i}$ in (\ref{filtevs}) and $\mathcal{E} / \mathcal{E}_{\le -w_i}$ are all locally free \cite[Definition 2.4]{HK1}, and such that each $\mathcal{E}_{\le -w_i}$ agrees with the saturated subsheaf $\mathcal{E}'_{\le -w_i}$ in (\ref{satfiltevs}).
\end{definition}

For each $\alpha \in \{ \hat{1} , \dots , \hat{\nu} \}$ in (\ref{defsubswgt}), the quotient sheaf $\mathcal{E}_{-w_{\alpha}} = \mathcal{E}_{\le -w_{\alpha}} / \mathcal{E}_{\le -w_{\alpha -1}}$ is locally free over $X^{reg}$ (and agrees as a $C^{\infty}$-vector bundle with the quotient vector bundle of $\mathcal{E}_{\le -w_{\alpha}}$ by $\mathcal{E}_{\le -w_{\alpha -1}}$; see \cite[discussion following Definition 2.4]{HK1}).

The definition (\ref{dfewigr}) of $\mathcal{E}_{-w_i}$, combined with the above definition of $X^{reg}$, we get a $C^{\infty}$-isomorphism 
\begin{equation} \label{mdualisoev}
\mathcal{E} \stackrel{\sim}{\to}  \bigoplus_{\alpha =\hat{1}}^{\hat{\nu}} \mathcal{E}_{-w_{\alpha}}
\end{equation}
of smooth complex vector bundles over $X^{reg}$ \cite[(2.8)]{HK1}. Moreover, we have a gauge transformation on $\bigoplus_{\alpha =\hat{1}}^{\hat{\nu}} \mathcal{E}_{-w_{\alpha}}$ over $X^{reg}$ by the constant endomorphism
\begin{equation} \label{defewt}
	e^{wt} := \mathrm{diag} (e^{ w_{\hat{1}} t} , \cdots , e^{w_{\hat{\nu}} t}),
\end{equation}
with $e^{w_{\alpha}t }$ acting on the factor $\mathcal{E}_{-w_{\alpha}}$.

An important observation is that the map $Q^*$, defined as the formal adjoint of $Q$ (\ref{dfsmmpq}) as
\begin{equation*}
	Q^* :  \overline{H^0 (X , \mathcal{E} (k))^{\vee}} \otimes \overline{C^{\infty}_X (-k)^{\vee}} \to \overline{\mathcal{E}^{\vee}},
\end{equation*}
can be regarded as a $C^{\infty}$-version of the quotient map $\rho$ (\ref{dfrhoss}), up to taking the metric dual in the domain and the range \cite[Lemma 1.22]{HK1}. This seems to suggest that the limit of the Bergman 1-PS $\{ h_{\sigma_t} \}_{t \ge 0}$ as $t \to + \infty$ can be related to the Quot-scheme limit $\bigoplus_{\alpha =\hat{1}}^{\hat{\nu}} \mathcal{E}_{-w_{\alpha}}$, up to the metric duality isomorphism $\mathcal{E} \isom \overline{\mathcal{E}^{\vee}}$ (as a $C^{\infty}$-vector bundle), and indeed it is the main technical result that was established in \cite{HK1}. More precisely, we define a hermitian metric on $\mathcal{E} |_{X^{reg}}$ by
\begin{equation} \label{defrn1ps}
		\hat{h}_{\sigma_t} :=  e^{- w t} h_{\sigma_t}  e^{- w t} , 
\end{equation}
with (\ref{mdualisoev}) understood, which we call the \textbf{renormalised Bergman 1-PS} associated to $\sigma_t$ \cite[Definition 2.9]{HK1}. An important fact is that this 1-PS is convergent in $C^{\infty}_{\mathrm{loc}}$ over $X^{reg}$ \cite[Proposition 2.8]{HK1}, and we call the limit
\begin{equation} \label{eqrnqsl}
		\hat{h} :=   \lim_{t \to + \infty} e^{- w t} h_{\sigma_t}  e^{- w t}
\end{equation}
the \textbf{renormalised Quot-scheme limit} of $h_{\sigma_t}$ \cite[Definition 2.9]{HK1}, which is positive definite over $X^{reg}$ \cite[Lemma 2.11]{HK1}.

\begin{prop} \emph{(see \cite[Proposition 2.8 and Lemma 2.11]{HK1})}
	The renormalised Bergman 1-PS converges in $C^{\infty}_{\mathrm{loc}}$ over $X^{reg}$ as $t \to + \infty$, and its limit defines a well-defined hermitian metric on $\mathcal{E}$ via (\ref{mdualisoev}) over $X^{reg}$.
\end{prop}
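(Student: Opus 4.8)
The plan is to make the expression (\ref{dffsmsgm}) completely explicit along the $1$-PS, reducing the statement to the asymptotics of finitely many real exponentials whose exponents are controlled by the weights $w_1 > \cdots > w_\nu$. I would fix a relatively compact open set $U \Subset X^{reg}$ over which the filtration (\ref{filtevs}) and the splitting (\ref{mdualisoev}) are realised by smooth subbundles and smooth projections, and work with matrix entries with respect to (\ref{mdualisoev}). Assuming $\zeta$ self-adjoint for the reference metric (as we may), the weight spaces $V_{-w_i,k}$ are mutually orthogonal and $\sigma_t^*\sigma_t$ acts on $V_{-w_i,k}$ as $e^{2w_i t}$; writing $Q = \sum_{i=1}^{\nu} Q_i$ with $Q_i : \mathcal{E} \to V_{-w_i,k}\otimes C^\infty_X(-k)$ the weight components of (\ref{dfsmmpq}), formula (\ref{dffsmsgm}) becomes the finite sum
\begin{equation*}
  h_{\sigma_t} \;=\; \sum_{i=1}^{\nu} e^{2w_i t}\, Q_i^{*} Q_i,
\end{equation*}
where each $Q_i^* Q_i$ is a $t$-independent smooth (possibly degenerate) Hermitian form on $\mathcal{E}|_U$.

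The key step is to pin down the \emph{block-triangular} structure of the $Q_i$ with respect to (\ref{mdualisoev}). Because $\rho$ in (\ref{dfrhoss}) respects the filtrations, i.e.\ $\rho(V_{\le -w_i,k}\otimes \mathcal{O}_X(-k)) = \mathcal{E}_{\le -w_i}$ by construction of (\ref{filtevs}), its block $V_{-w_i,k}\to \mathcal{E}_{-w_\alpha}$ (with respect to the smooth splittings of both filtrations over $X^{reg}$) vanishes whenever $w_\alpha < w_i$, while the diagonal block $V_{-w_\alpha,k}\to\mathcal{E}_{-w_\alpha}$ is the graded map, which is surjective over $X^{reg}$. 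Transferring this block structure through the identification of $Q^*$ with $\rho$ up to metric duals \cite[Lemma 1.22]{HK1}, I would conclude that, for a smooth section $s$ of the summand $\mathcal{E}_{-w_\alpha}$ in (\ref{mdualisoev}), the component $Q_i s$ vanishes whenever $w_i > w_\alpha$, while $Q_\alpha|_{\mathcal{E}_{-w_\alpha}}$ is fibrewise injective over $X^{reg}$ (being the metric adjoint of the surjective graded map). This is exactly the point at which only the nonzero-rank graded pieces, i.e.\ the subset (\ref{defsubswgt}), matter: the vanishing-rank $\mathcal{E}_{-w_i}$ are zero over $X^{reg}$ and contribute nothing to (\ref{mdualisoev}).

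Granting this, the convergence is immediate. For smooth sections $s,s'$ of $\mathcal{E}_{-w_\alpha},\mathcal{E}_{-w_\beta}$ with $w_\alpha\ge w_\beta$, the vanishing above forces $\langle Q_i s, Q_i s'\rangle = 0$ unless $w_i\le w_\beta$, so by (\ref{defewt}) and (\ref{defrn1ps}) one gets
\begin{equation*}
  \hat{h}_{\sigma_t}(s,s') \;=\; \sum_{w_i \le w_\beta} e^{(2w_i - w_\alpha - w_\beta)t}\,\langle Q_i s, Q_i s'\rangle ,
\end{equation*}
and every exponent satisfies $2w_i-w_\alpha-w_\beta \le w_\beta-w_\alpha \le 0$, with equality only when $\alpha=\beta$ and $w_i=w_\alpha$. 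Hence the off-diagonal blocks ($\alpha\ne\beta$) decay exponentially to $0$, while each diagonal block converges to $\langle Q_\alpha s, Q_\alpha s'\rangle$, which is positive definite on $\mathcal{E}_{-w_\alpha}$ by the injectivity of $Q_\alpha|_{\mathcal{E}_{-w_\alpha}}$. Thus the limit (\ref{eqrnqsl}) exists, is block-diagonal for (\ref{mdualisoev}), and is a positive-definite Hermitian metric, which gives \cite[Lemma 2.11]{HK1}. Since the sum is finite with $t$-independent smooth coefficients and every non-leading exponent is strictly negative, the same estimate applied to $x$-derivatives (uniformly on $\bar U$) upgrades this to $C^\infty$-convergence on $U$, hence to $C^\infty_{\mathrm{loc}}$-convergence over $X^{reg}$ as $U\Subset X^{reg}$ was arbitrary.

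I expect the genuine obstacle to be the second step: establishing the block-triangularity of $Q$ together with the fibrewise injectivity of its diagonal block over $X^{reg}$. This is where the precise identification \cite[Lemma 1.22]{HK1} of the formal adjoint $Q^*$ with the quotient map $\rho$, and the careful choice of $X^{reg}$ in Definition \ref{dfxreg} (ensuring that the $\mathcal{E}_{\le -w_i}$ and $\mathcal{E}/\mathcal{E}_{\le -w_i}$ are locally free and that the saturations agree), must be used in an essential way. Once the leading weight of $Qs$ on each graded summand is identified, together with nondegeneracy of its leading coefficient, the remaining analysis is the elementary exponential bookkeeping above.
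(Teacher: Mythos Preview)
The paper does not give its own proof of this proposition; it is quoted from \cite[Proposition 2.8 and Lemma 2.11]{HK1} as a summary result, so there is no in-paper argument to compare against. Your outline is correct and is the natural approach: decomposing $h_{\sigma_t}=\sum_i e^{2w_i t}Q_i^*Q_i$ and showing block-triangularity of $Q$ with respect to the splitting (\ref{mdualisoev}) reduces the statement to elementary exponential asymptotics, with positive-definiteness of the limit coming from fibrewise injectivity of the diagonal block $Q_\alpha|_{\mathcal{E}_{-w_\alpha}}$.

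Two points worth tightening. First, the adjoint argument transferring the triangularity of $\rho$ (hence of $Q^*$) to $Q$ requires that the $C^\infty$ splitting (\ref{mdualisoev}) be \emph{orthogonal} for the reference metric $h_{\mathrm{ref}}=Q^*Q$, so that the orthogonal projection onto $\mathcal{E}_{-w_\alpha}$ coincides with the quotient map $\mathcal{E}_{\le -w_\alpha}\to\mathcal{E}_{-w_\alpha}$; you should state this explicitly, as otherwise the vanishing $Q_i s=0$ for $w_i>w_\alpha$ does not follow from $\langle Q^*v,s\rangle=0$. Second, the identification of $Q^*$ with $\rho$ in \cite[Lemma 1.22]{HK1} passes through metric duals and involves $\overline{\mathcal{E}^\vee}$ rather than $\mathcal{E}$ directly (cf.\ the remark after Definition \ref{dfmdon2sm} in the paper), so the filtration and the surjectivity statement must be transported accordingly; this is routine but is exactly the bookkeeping you flag as the genuine obstacle, and it is where one must invoke the precise content of \cite[Lemma 1.22]{HK1} rather than the informal summary given here.
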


The above limit (\ref{eqrnqsl}) is only defined on a Zariski open subset $X^{reg}$ of $X$, and may well be degenerate on $X \setminus X^{reg}$. In spite of this drawback, we can evaluate the degeneracy of $\hat{h}$ by comparing $Q^*$ with $\rho$ and by using the resolution of singularities. The argument is technical, but can be carried out by using the methods in Jacob \cite{Jacob} and Sibley \cite{Sibley}, which occupies a large portion of the technical argument in \cite[Section 3]{HK1}.

\begin{rem} \label{rmopnml1}
	Throughout in what follows, we shall assume that the operator norm (i.e. the modulus of the maximum eigenvalue) of $\zeta$ is at most 1, as pointed out in \cite[Remark 2.1]{HK1}.
\end{rem}

\begin{rem} \label{rmsgvar}
	Inspection of \cite{HK1} reveals that almost all the arguments so far carry over word-by-word to the case when $X$ is a (not necessarily smooth) projective variety and when $\mathcal{E}$ is a torsion-free sheaf, by considering hermitian metrics on $X \setminus ( \mathrm{Sing} (X) \cup \mathrm{Sing} (\mathcal{E}))$ instead of $X$; the only exception is that the map $\Phi$ in (\ref{dfgrembkdr}) needs to be replaced by a rational map that is well-defined on $X \setminus \mathrm{Sing} (\mathcal{E})$. This means that $X^{reg}$ as in Definition \ref{dfxreg} should be replaced by the Zariski open subset in $X$ excluding all the singular sets of sheaves $\mathcal{E}_{\le -w_i}$ and $\mathcal{E} / \mathcal{E}_{\le -w_i}$ and that of the background data, i.e.~$\mathrm{Sing} (X) \cup \mathrm{Sing} (\mathcal{E})$. In the arguments later (except for Remark \ref{rmsgdsgvr} and Section \ref{scgstbm}), however, it will be important that $X$ is smooth.
\end{rem}

\subsection{The non-Archimedean Donaldson functional}

We recall an important functional defined by Donaldson \cite{Don1985}. Let $\mathcal{H}_{\infty}$ be the set of all smooth hermitian metrics on $\mathcal{E}$.

\begin{definition} \label{dfdnfc}
 Given two hermitian metrics $h_0$ and $h_1$ on $\mathcal{E}$, \textbf{the Donaldson functional} ${\mathcal M}^{Don} : \mathcal{H}_{\infty}\times \mathcal{H}_{\infty} \to \mathbb{R}$ is defined as $${\mathcal M}^{Don}(h_1,h_0) := {\mathcal M}_1^{Don}(h_1,h_0) - \mu(\mathcal{E}){\mathcal M}_2^{Don}(h_1,h_0),$$
 where $${\mathcal M}_1^{Don}(h_1,h_0) :=\int_0^1 \int_X \tr \left( h_t^{-1} \partial_t h_t \cdot F_t\right) \frac{\omega^{n-1}}{(n-1)!} dt$$ and $${\mathcal M}_2^{Don}(h_1,h_0) :=\frac{1}{\mathrm{Vol}_L}\int_X \log\det(h_0^{-1}h_1)\frac{\omega^n}{n!},$$ with
 \begin{equation*}
 	\mu (\mathcal{E}) := \frac{\int_X c_1 (\mathcal{E}) \wedge \omega^{n-1} / (n-1)!}{\mathrm{rk} (\mathcal{E})} .
 \end{equation*}
 
 In the above, $\{ h_t \}_{0 \le t \le 1} \subset \mathcal{H}_{\infty}$ is a smooth path of hermitian metrics between $h_0$ to $h_1$ and $F_t$ denotes ($\ai / 2 \pi$) times the Chern curvature of $h_t$, with respect to the holomorphic structure of $\mathcal{E}$.
\end{definition}

\begin{rem} \label{remordargdfn}
	Throughout in what follows, we shall fix the second argument of ${\mathcal M}^{Don}$ as a reference metric. Thus ${\mathcal M}^{Don}(h,h_0)$ is regarded as a function of $h$ with a fixed reference metric $h_0$.
\end{rem}

The critical point of the Donaldson is the Hermitian--Einstein metric. An important property of the Donaldson functional is that it is convex along geodesics in $\mathcal{H}_{\infty}$, with an appropriate notion of geodesics in $\mathcal{H}_{\infty}$; see \cite{Don1985,Kobook} for more details. Thus, the existence of the critical point of the Donaldson functional can be, at least conceptually, characterised by the positivity of its asymptotic slope. The main result of \cite{HK1} is the explicit description of the asymptotic slope of the Donaldson functional along the rational Bergman 1-PS by means of the algebro-geometric data.

We now recall the non-Archimedean Donaldson functional from \cite{HK1}, defined for a rational Bergman 1-PS generated by $\zeta \in \mathfrak{sl} (H^0 (X , \mathcal{E} (k)))$. Writing $w_1 , \dots , w_{\nu} \in \mathbb{Q}$ for the weights of $\zeta$ as in (\ref{ordlambda}), we choose $j(\zeta , k) \in \mathbb{N}$ to be the minimum integer so that 
\begin{equation} \label{defsmjzk}
	j(\zeta , k) w_i \in \mathbb{Z}
\end{equation}
for all $i=1 , \dots, \nu$. Writing $\bar{w}_i:= j(\zeta , k) w_i$, we may replace the filtration (\ref{satfiltevs}) by
\begin{equation} \label{itsatfiltevs}
	0 \neq \mathcal{E}'_{\le - \bar{w}_{1}} \subset \cdots \subset \mathcal{E}'_{\le - \bar{w}_{\nu}} = \mathcal{E}
\end{equation}
which is graded by integers. With this understood, the following was defined in \cite[Definition 4.3]{HK1}.

\begin{definition} \label{dfnadfn}
	\textbf{The non-Archimedean Donaldson functional} $\mathcal{M}^{\mathrm{NA}} (\zeta , k)$ is a rational number defined for $\zeta \in \mathfrak{sl} (H^0 (X , \mathcal{E} (k)))$ with rational eigenvalues as
	\begin{equation*}
		\mathcal{M}^{\mathrm{NA}} (\zeta , k) := \frac{2}{j(\zeta , k)} \sum_{q \in \mathbb{Z}} \mathrm{rk} (\mathcal{E}'_{\le q}) \left(  \mu(\mathcal{E}) - \mu (\mathcal{E}'_{\le q}) \right).
	\end{equation*}
\end{definition}

We recall, in the above, that the \textbf{slope} of a (necessarily torsion-free) subsheaf $\mathcal{F} \subset \mathcal{E}$ on $X$ is defined by
	\begin{equation*}
		\mu ( \mathcal{F} ) := \frac{\mathrm{deg} (\mathcal{F})}{\mathrm{rk} (\mathcal{F})},
	\end{equation*}
	where, by noting that $\det \mathcal{F}:= \left( \bigwedge^{\mathrm{rk} (\mathcal{F})} \mathcal{F} \right)^{\vee \vee}$ is a holomorphic line bundle on $X$ \cite[Proposition 5.6.10]{Kobook}, the degree is given by $\mathrm{deg} (\mathcal{F}) := \int_X c_1 (\det \mathcal{F}) \wedge c_1 (\mathcal{O}_X(1))^{n-1} / (n-1)!$.

We can also define \cite[Definition 4.1]{HK2} a rational number $J^{\mathrm{NA}} ( \zeta  , k)$ for $\zeta \in \mathfrak{sl} (H^0 (X , \mathcal{E} (k)))$ with rational eigenvalues by
\begin{equation} \label{eqjnavb}
	J^{\mathrm{NA}} (\zeta, k) := \max_{\alpha , \beta \in \{ \hat{1} , \dots , \hat{\nu} \}} |w_{\alpha} - w_{\beta}|
\end{equation}
where we recall (\ref{defsubswgt}) for the definition of $\hat{1} , \dots , \hat{\nu}$. We can easily show \cite[Remark 4.2]{HK2} that $J^{\mathrm{NA}} (\zeta, k) = 0$ if and only if the corresponding filtration (\ref{satfiltevs}) is trivial, i.e.~equals $0 \subsetneq \mathcal{E}$.

An elementary yet important fact is that the positivity of $\mathcal{M}^{\mathrm{NA}} (\zeta , k)$ is equivalent to the slope stability of $\mathcal{E}$, as stated in the following.

\begin{prop} \emph{(see \cite[Proposition 6.2]{HK1})} \label{prop62hk1}
	The non-Archimedean Donaldson functional $\mathcal{M}^{\mathrm{NA}} (\zeta , k)$ is positive (resp.~nonnegative) for all $k \ge \mathrm{reg}(\mathcal{E})$ and all $\zeta \in \mathfrak{sl} (H^0 (X , \mathcal{E}(k))^{\vee})$, with rational eigenvalues, whose associated filtration (\ref{satfiltevs}) is nontrivial, if and only if $\mathcal{E}$ is slope stable (resp.~semistable), i.e.~for any subsheaf $\mathcal{F}$ of $\mathcal{E}$ with $0 < \mathrm{rk} (\mathcal{F}) < \mathrm{rk} (\mathcal{E})$ we have
	\begin{equation*}
		\mu(\mathcal{E}) > \mu (\mathcal{F}), \quad (\text{resp.~} \mu(\mathcal{E}) \ge \mu (\mathcal{F})).
	\end{equation*}
\end{prop}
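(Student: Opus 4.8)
The plan is to first rewrite $\mathcal{M}^{\mathrm{NA}}(\zeta,k)$ of Definition \ref{dfnadfn} in a form whose sign is manifest, and then read off both implications from it. Using $\mathrm{rk}(\mathcal{E}'_{\le q})\,\mu(\mathcal{E}'_{\le q}) = \deg(\mathcal{E}'_{\le q})$ together with the fact that the step sheaf $\mathcal{E}'_{\le q}$ in (\ref{itsatfiltevs}) is constant in $q$ on each interval $-\bar{w}_i \le q < -\bar{w}_{i+1}$ (and equals $0$ for $q < -\bar{w}_1$ and $\mathcal{E}$ for $q \ge -\bar{w}_\nu$, both extremes producing vanishing summands), a summation by parts over $q \in \mathbb{Z}$ collapses the sum in Definition \ref{dfnadfn} to the finite expression
\begin{equation*}
\mathcal{M}^{\mathrm{NA}}(\zeta,k) = 2\sum_{i=1}^{\nu-1}(w_i - w_{i+1})\,\mathrm{rk}(\mathcal{E}'_{\le -\bar{w}_i})\bigl(\mu(\mathcal{E}) - \mu(\mathcal{E}'_{\le -\bar{w}_i})\bigr),
\end{equation*}
the factor $1/j(\zeta,k)$ being absorbed because each interval contains exactly $\bar{w}_i - \bar{w}_{i+1} = j(\zeta,k)(w_i-w_{i+1})$ integers. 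Each coefficient $w_i - w_{i+1}$ is strictly positive by the ordering (\ref{ordlambda}), and each $\mathcal{E}'_{\le -\bar{w}_i}$ with $i < \nu$ is a nonzero saturated subsheaf, hence of positive rank, this rank being strictly less than $\mathrm{rk}(\mathcal{E})$ unless $\mathcal{E}'_{\le -\bar{w}_i} = \mathcal{E}$, in which case the corresponding summand vanishes.

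For the forward implication, suppose $\mathcal{E}$ is slope stable. Then every proper saturated subsheaf $\mathcal{F}$ satisfies $\mu(\mathcal{E}) > \mu(\mathcal{F})$, so each summand above is $\ge 0$; moreover a nontrivial filtration (\ref{satfiltevs}) contains at least one step $\mathcal{E}'_{\le -\bar{w}_i}$ of rank strictly between $0$ and $\mathrm{rk}(\mathcal{E})$ (equivalently $J^{\mathrm{NA}}(\zeta,k) > 0$), which contributes a strictly positive term. Hence $\mathcal{M}^{\mathrm{NA}}(\zeta,k) > 0$ for all $k \ge \mathrm{reg}(\mathcal{E})$ and all nontrivial $\zeta$. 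The semistable case is identical with $>$ weakened to $\ge$.

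For the converse I would exhibit, for any destabilising test object, a single pair $(\zeta,k)$ detecting it. Let $\mathcal{F} \subset \mathcal{E}$ be a subsheaf with $0 < \mathrm{rk}(\mathcal{F}) < \mathrm{rk}(\mathcal{E})$; since $\mu(\bar{\mathcal{F}}) \ge \mu(\mathcal{F})$ for the saturation $\bar{\mathcal{F}}$, it suffices to treat $\bar{\mathcal{F}}$ saturated. Choosing $k$ large enough, say $k \ge \max(\mathrm{reg}(\mathcal{E}),\mathrm{reg}(\bar{\mathcal{F}}))$ (such $k$ are admissible, as the hypothesis ranges over all $k \ge \mathrm{reg}(\mathcal{E})$), so that $\bar{\mathcal{F}}(k)$ is globally generated and $H^0(X,\bar{\mathcal{F}}(k))$ injects into $H^0(X,\mathcal{E}(k))$, I would take $\zeta$ with two weights $w_1 > w_2$ and weight spaces $V_1 = H^0(X,\bar{\mathcal{F}}(k))$ and a complement $V_2$, normalised to lie in $\mathfrak{sl}(H^0(X,\mathcal{E}(k))^{\vee})$. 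The subsheaf $\mathcal{E}'_{\le -w_1}$ generated by $V_1$ through $\rho$ is then precisely $\bar{\mathcal{F}}$, so the displayed formula reduces to $\mathcal{M}^{\mathrm{NA}}(\zeta,k) = 2(w_1-w_2)\,\mathrm{rk}(\bar{\mathcal{F}})(\mu(\mathcal{E}) - \mu(\bar{\mathcal{F}}))$, and positivity forces $\mu(\mathcal{E}) > \mu(\bar{\mathcal{F}}) \ge \mu(\mathcal{F})$, i.e.~slope stability. The semistable statement follows verbatim with $\ge$.

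The main obstacle is this last construction: one must verify that for $k$ large the subspace $V_1 = H^0(X,\bar{\mathcal{F}}(k))$ genuinely recovers $\bar{\mathcal{F}}$ as the image subsheaf under $\rho$ and already saturates to it, so that the two-step filtration (\ref{satfiltevs}) attached to $\zeta$ is exactly $0 \subset \bar{\mathcal{F}} \subset \mathcal{E}$. This is where Castelnuovo--Mumford regularity is essential, guaranteeing global generation of $\bar{\mathcal{F}}(k)$ and left-exactness of $0 \to H^0(X,\bar{\mathcal{F}}(k)) \to H^0(X,\mathcal{E}(k))$; the remaining bookkeeping—that discarding the zero-rank and full-rank steps leaves the sum unchanged, and that the $\mathbb{C}^*$-action conventions of Section \ref{revqslimfs} assign the larger weight to $V_1$ so that $\mathcal{E}'_{\le -w_1} = \bar{\mathcal{F}}$—is routine.
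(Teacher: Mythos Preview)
Your proof is correct and follows essentially the same route as the paper indicates: the paper does not give a self-contained argument but points to \cite[Section 5]{HK1} and to the proof of Proposition \ref{res1}, both of which hinge on exactly the two-step filtration construction you describe for the converse direction, while the forward direction is immediate from the sign of each summand in the definition. Your summation-by-parts rewriting makes the structure more explicit than the references do, and your use of an arbitrary saturated $\bar{\mathcal{F}}$ (rather than the maximal destabiliser $\mathcal{F}_{\max}$ used in Proposition \ref{res1}) is the natural choice here since the hypothesis quantifies over all $k \ge \mathrm{reg}(\mathcal{E})$; the concern you flag at the end is precisely what \cite[Proposition 5.2]{HK1} establishes.
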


This can be proved by an argument similar to the proof of Proposition \ref{res1} which is presented later (see also \cite[Section 5]{HK1}).

\begin{rem}
	When $\mathcal{E}$ splits into a direct sum of holomorphic vector subbundles, the right notion to consider is the slope polystability: $\mathcal{E} = \bigoplus_{1 \le l \le m} \mathcal{E}_l$ is said to be slope polystable if each direct summand $\mathcal{E}_l$ is slope stable and $\mu ( \mathcal{E}_{l_1} ) = \mu ( \mathcal{E}_{l_2} )$ for all $1 \le l_1 , l_2 \le m$.
\end{rem}

\subsection{Summary of results in \cite{HK1,HK2}}

We now recall the main results of \cite{HK1} as follows.

\begin{thm} \emph{(see \cite[Theorems 1, 2, and Corollary 6.3]{HK1})} \label{thmlnsdf}
There exists a constant $c_k >0$ that depends only on the reference metric $h_{\mathrm{ref}} \in \mathcal{H}_{\infty}$ and $k \in \mathbb{N}$ such that
\begin{equation}\label{coercive}
		\mathcal{M}^{Don}(h_{\sigma_t} ,  h_{\mathrm{ref}}) \ge  \mathcal{M}^{\mathrm{NA}} (\zeta , k) t - c_{k}
\end{equation}
holds for all $t \ge 0$ and all hermitian $\zeta \in \mathfrak{sl}(H^0(\mathcal{E}(k))^{\vee})$ with rational eigenvalues. We can further show that
\begin{equation} \label{rmlnsdf}
\mathcal{M}^{Don} (h_{\sigma_t}, h_{\mathrm{ref}}) = \mathcal{M}^{\mathrm{NA}} (\zeta , k) t + O(1),
\end{equation}
where $O(1)$ stands for the term that remains bounded as $t \to + \infty$, i.e $\mathcal{M}^{Don}$ has log norm singularities along any rational Bergman 1-PS. In particular, we have
\begin{equation*}
\lim_{t \to + \infty} \frac{\mathcal{M}^{Don} (h_{\sigma_t}, h_{\mathrm{ref}})}{t} = \mathcal{M}^{\mathrm{NA}} (\zeta , k),
\end{equation*}
which shows that $\mathcal{M}^{\mathrm{NA}} (\zeta , k)$ is the term that controls the asymptotic behaviour of $\mathcal{M}^{Don}(h_{\sigma_t}, h_{\mathrm{ref}})$.
\end{thm}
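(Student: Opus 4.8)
The plan is to extract the asymptotic slope of $t\mapsto\mathcal{M}^{Don}(h_{\sigma_t},h_{\mathrm{ref}})$ from its $t$-derivative, to identify the limiting derivative with $\mathcal{M}^{\mathrm{NA}}(\zeta,k)$ by means of the renormalised Quot-scheme limit, and then to control the lower-order error uniformly in $\zeta$. Differentiating the two pieces of Definition \ref{dfdnfc} gives
\begin{equation*}
\frac{d}{dt}\mathcal{M}^{Don}(h_{\sigma_t},h_{\mathrm{ref}}) = \int_X \tr\!\left(h_{\sigma_t}^{-1}\partial_t h_{\sigma_t}\cdot F_t\right)\frac{\omega^{n-1}}{(n-1)!} - \frac{\mu(\mathcal{E})}{\mathrm{Vol}_L}\int_X \tr\!\left(h_{\sigma_t}^{-1}\partial_t h_{\sigma_t}\right)\frac{\omega^n}{n!},
\end{equation*}
so that everything reduces to the $t\to+\infty$ behaviour of the velocity $h_{\sigma_t}^{-1}\partial_t h_{\sigma_t}$ paired with the curvature $F_t$.

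Next I would pass to $X^{reg}$ and substitute the renormalisation $h_{\sigma_t}=e^{wt}\hat h_{\sigma_t}e^{wt}$ from (\ref{defrn1ps}). Because the gauge transformation (\ref{defewt}) is constant and commutes with its generator $w=\mathrm{diag}(w_{\hat 1},\dots,w_{\hat\nu})$, one computes
\begin{equation*}
h_{\sigma_t}^{-1}\partial_t h_{\sigma_t} = e^{-wt}\hat h_{\sigma_t}^{-1}\,w\,\hat h_{\sigma_t}\,e^{wt} + e^{-wt}\hat h_{\sigma_t}^{-1}(\partial_t\hat h_{\sigma_t})\,e^{wt} + w .
\end{equation*}
By the recalled convergence $\hat h_{\sigma_t}\to\hat h$ in $C^\infty_{\mathrm{loc}}$ over $X^{reg}$, the middle term drops out while the other two combine to the weight operator $2w$ over $X^{reg}$; paired with the limiting curvature $\hat F$ of $\hat h$ on $\bigoplus_\alpha\mathcal{E}_{-w_\alpha}$, only the diagonal blocks survive the trace, and $\int\tr(\hat F_\alpha)\,\omega^{n-1}/(n-1)!=\deg(\mathcal{E}_{-w_\alpha})$. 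The limiting derivative is therefore $2\sum_\alpha w_\alpha(\deg(\mathcal{E}_{-w_\alpha})-\mu(\mathcal{E})\,\mathrm{rk}(\mathcal{E}_{-w_\alpha}))$, and a summation by parts along the filtration (\ref{itsatfiltevs}), using $\mathcal{E}_{-w_\alpha}=\mathcal{E}_{\le-w_\alpha}/\mathcal{E}_{\le-w_{\alpha-1}}$ together with the vanishing boundary term $\deg(\mathcal{E})-\mu(\mathcal{E})\,\mathrm{rk}(\mathcal{E})=0$, recovers exactly the expression of Definition \ref{dfnadfn}. This yields $\lim_{t\to+\infty}\frac{d}{dt}\mathcal{M}^{Don}=\mathcal{M}^{\mathrm{NA}}(\zeta,k)$, and hence the asymptotic slope statement by a Ces\`aro argument.

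The main obstacle is that $\hat h$ and $\hat F$ live only over the Zariski open set $X^{reg}$ of Definition \ref{dfxreg} and may degenerate along $X\setminus X^{reg}$, so the interchange of limit and integral above — and the passage from the curvature trace to the honest degree $\deg(\mathcal{E}_{-w_\alpha})$ — is not automatic. To justify it I would compare $Q^*$ with the quotient map $\rho$ as indicated after (\ref{eqrnqsl}), pull back under a resolution of singularities of the flag of subsheaves so that the degeneration becomes of normal-crossing type, and estimate the integrals over shrinking neighbourhoods of the exceptional and singular loci; this is the technical input of Jacob and Sibley. These estimates must be quantitative, giving an integrable (in $t$) bound on $\frac{d}{dt}\mathcal{M}^{Don}-\mathcal{M}^{\mathrm{NA}}(\zeta,k)$, so that $\mathcal{M}^{Don}(h_{\sigma_t},h_{\mathrm{ref}})-\mathcal{M}^{\mathrm{NA}}(\zeta,k)\,t$ is bounded; under the normalisation $\norm{\zeta}\le 1$ of Remark \ref{rmopnml1} the weights and $e^{wt}$ stay in a compact range, so the bound can be made to depend only on $h_{\mathrm{ref}}$ and $k$.

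Finally I would assemble the three assertions. The boundedness just obtained gives $\mathcal{M}^{Don}(h_{\sigma_t},h_{\mathrm{ref}})=\mathcal{M}^{\mathrm{NA}}(\zeta,k)\,t+O(1)$, which is (\ref{rmlnsdf}). For the coercive lower bound (\ref{coercive}), valid for all $t\ge0$ and with $c_k$ uniform in $\zeta$, I would use the convexity of $\mathcal{M}^{Don}$ (cf.\ the discussion after Definition \ref{dfdnfc}) along the Bergman $1$-PS, which are subgeodesics: convexity forces $\frac{d}{dt}\mathcal{M}^{Don}$ to increase to its limit $\mathcal{M}^{\mathrm{NA}}(\zeta,k)$, so $\mathcal{M}^{Don}(h_{\sigma_t},h_{\mathrm{ref}})-\mathcal{M}^{\mathrm{NA}}(\zeta,k)\,t$ is nonincreasing, and its finite limit (finite by the boundedness above) is the constant that produces (\ref{coercive}) for every $t$, with $c_k$ controlled by the uniform bounds. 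The crux that I expect to absorb most of the work is the third step: showing that the curvature integrals localise correctly onto $X^{reg}$ and that the resulting error stays bounded below uniformly in $\zeta$ despite the degeneration of $\hat h$.
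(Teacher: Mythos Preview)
The paper itself does not prove this theorem; it is quoted from \cite{HK1} as part of the survey in Section~\ref{scrmrhk1}. What the paper does give is a description of the machinery behind the proof in \cite{HK1}: the renormalised Bergman 1-PS (\ref{defrn1ps}), its $C^\infty_{\mathrm{loc}}$ convergence on $X^{reg}$, and the control of the degeneration of $\hat h$ on $X\setminus X^{reg}$ via resolution of singularities and the Jacob--Sibley estimates. Your outline follows this architecture faithfully, so at the level of strategy it matches what the paper indicates.

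Two points in your write-up are problematic. First, your final step invokes convexity of $\mathcal{M}^{Don}$ along Bergman 1-PS, citing that they are subgeodesics. The paper (and \cite{Don1985,Kobook}) only asserts convexity of $\mathcal{M}^{Don}$ along \emph{geodesics}; the subgeodesic inequality $\partial_t(h_{\sigma_t}^{-1}\partial_t h_{\sigma_t})\ge 0$ does not by itself force $\frac{d^2}{dt^2}\mathcal{M}^{Don}\ge 0$, because the second variation contains a term of the form $\int_X \tr\big(\partial_t v_t\cdot(\Lambda F_t-\mu(\mathcal{E})\mathrm{Id})\big)\,\omega^n/n!$ with $\Lambda F_t-\mu(\mathcal{E})\mathrm{Id}$ of indefinite sign. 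In any case the convexity step is redundant: once you have the two-sided bound $|\mathcal{M}^{Don}(h_{\sigma_t},h_{\mathrm{ref}})-\mathcal{M}^{\mathrm{NA}}(\zeta,k)\,t|\le c_k$ uniformly in $\zeta$, the inequality (\ref{coercive}) is immediate for all $t\ge 0$ without any monotonicity.

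Second, the uniformity in $\zeta$ is where the real work lies, and your justification is not right as stated: $e^{wt}$ certainly does \emph{not} stay in a compact range as $t\to\infty$ (only the weights $w_\alpha$ do, under $\|\zeta\|_{op}\le 1$), and more seriously the singular locus $X\setminus X^{reg}$ of Definition~\ref{dfxreg}, together with the resolution you pull back along, depends on $\zeta$. Getting a bound that depends only on $h_{\mathrm{ref}}$ and $k$ despite this dependence is precisely the ``large portion of the technical argument in \cite[Section~3]{HK1}'' alluded to after (\ref{eqrnqsl}); your sketch correctly identifies that this is the crux but does not indicate how the $\zeta$-dependence is removed.
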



Another way of stating the property \eqref{coercive} is to say that $\mathcal{M}^{Don}$ is \textit{coercive} (resp. bounded from below) along rational Bergman 1-PS if $\mathcal{E}$ is slope stable (resp. slope semistable).

The corollary of the above result is that we can show that the existence of the Hermitian--Einstein metrics implies $\mathcal{E}$ is slope stable \cite[Section 7]{HK1}. It may also be worth noting that the analysis used to prove the above results is elementary, cf.~\cite[Section 3]{HK1}.

The reverse direction of the correspondence was discussed in \cite{HK2}. Writing $\mathcal{H}_k$ for the set of $k$-th Fubini--Study metrics, we have a natural inclusion $\mathcal{H}_k \subset \mathcal{H}_{\infty}$ for each $k$. While $\mathcal{H}_k$ is a ``small'' subset of $\mathcal{H}_{\infty}$ parametrised by a finite dimensional homogeneous manifold $SL(H^0 (X , \mathcal{E} (k))^{\vee}) / SU (N_k)$, it turns out that the union of $\mathcal{H}_k$'s is dense in $\mathcal{H}_{\infty}$ with respect to the $C^p$-topology (for any fixed $p \in \mathbb{N}$), i.e.~
\begin{equation} \label{eqbexds}
	\mathcal{H}_{\infty} = \overline{\bigcup_{k \in \mathbb{N}} \mathcal{H}_k}.
\end{equation}
This fact follows from a foundational result in K\"ahler geometry, called the \textbf{asymptotic expansion of the Bergman kernel}, which is also called the Tian--Yau--Zelditch expansion, and is essentially a theorem in analysis. We do not give a detailed account of this result, and refer the reader to \cite{Catlin}, \cite{Wang2}; note that several proofs have been written especially when $\mathcal{E}$ has rank one (see e.g.~the book \cite{M-M} and references therein).  An elementary proof can be found in \cite{BBS2008}.


The main result of \cite{HK2} is that, \textit{if} we assume that a uniform version of Theorem \ref{thmlnsdf} holds, we can prove that the slope stability implies Hermitian--Einstein metrics by only using elementary analysis except for the asymptotic expansion of the Bergman kernel. The precise statement is as follows.

\begin{thm} \emph{(see \cite[Theorem 1]{HK2})} 
	Suppose that the estimate in the theorem above holds uniformly in $k$, i.e.~
	\begin{equation*}
		\mathcal{M}^{Don}(h_{\sigma_t} ,  h_{\mathrm{ref}}) \ge  \mathcal{M}^{\mathrm{NA}} (\zeta , k) t - c_{\mathrm{ref}}
\end{equation*}
for a constant $c_{\mathrm{ref}} > 0$ that depends only on the reference metric. Then we can prove that the stability implies the existence of the Hermitian--Einstein metric by using only elementary analytic methods except for $\mathcal{H}_{\infty} = \overline{\bigcup_{k \in \mathbb{N}} \mathcal{H}_k}$ in (\ref{eqbexds}) which is a consequence of the asymptotic expansion of the Bergman kernel.
\end{thm}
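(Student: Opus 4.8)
The plan is to prove existence by a finite-dimensional variational scheme: minimise the Donaldson functional $\mathcal{M}^{Don}(\cdot, h_{\mathrm{ref}})$ on each $\mathcal{H}_k$, and then let $k \to +\infty$. The point of passing through the finite-dimensional spaces $\mathcal{H}_k \cong SL(H^0(X,\mathcal{E}(k))^{\vee})/SU(N_k)$ is that each is a symmetric space of non-compact type (a Hadamard manifold) whose geodesic rays are exactly the Bergman 1-PS $\{h_{\sigma_t}\}_{t \ge 0}$, so that the asymptotic behaviour of $\mathcal{M}^{Don}$ along rays is completely captured by the non-Archimedean data $\mathcal{M}^{\mathrm{NA}}(\zeta,k)$ of Theorem \ref{thmlnsdf}. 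The two external inputs are the density $\mathcal{H}_{\infty} = \overline{\bigcup_k \mathcal{H}_k}$ from (\ref{eqbexds}), which guarantees that a limit of Fubini--Study metrics is a genuine smooth metric and that minimising over $\bigcup_k \mathcal{H}_k$ is the same as minimising over $\mathcal{H}_{\infty}$, and the assumed uniform estimate, which keeps the minimisers from escaping to infinity as $k$ grows.

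First, for fixed $k$, I would produce a minimiser $h_k \in \mathcal{H}_k$. The restriction of $\mathcal{M}^{Don}(\cdot, h_{\mathrm{ref}})$ to $\mathcal{H}_k$ is convex along the Bergman 1-PS, which are subgeodesics (Section \ref{scbg1pss}) along which Donaldson's second-variation formula retains its sign. By Theorem \ref{thmlnsdf} the asymptotic slope of this restriction along the ray generated by $\zeta$ equals $\mathcal{M}^{\mathrm{NA}}(\zeta,k)$, and by Proposition \ref{prop62hk1} the slope stability of $\mathcal{E}$ forces $\mathcal{M}^{\mathrm{NA}}(\zeta,k) > 0$ for every $\zeta$ with nontrivial filtration, i.e.\ for every direction with $J^{\mathrm{NA}}(\zeta,k) > 0$. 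Since the unit sphere of Hermitian elements in $\mathfrak{sl}(H^0(X,\mathcal{E}(k))^{\vee})$ is compact and $\zeta \mapsto \mathcal{M}^{\mathrm{NA}}(\zeta,k)$ is lower semicontinuous, this upgrades to a positive lower bound $\mathcal{M}^{\mathrm{NA}}(\zeta,k) \ge \epsilon_k\, J^{\mathrm{NA}}(\zeta,k)$; combined with the convexity, the convex function $\mathcal{M}^{Don}|_{\mathcal{H}_k}$ is then proper and attains its infimum at some $h_k \in \mathcal{H}_k$.

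Next I would let $k \to +\infty$. Here the hypothesis enters: the uniform estimate $\mathcal{M}^{Don}(h_{\sigma_t},h_{\mathrm{ref}}) \ge \mathcal{M}^{\mathrm{NA}}(\zeta,k)\,t - c_{\mathrm{ref}}$ with $c_{\mathrm{ref}}$ independent of $k$, together with a uniform-in-$k$ version $\mathcal{M}^{\mathrm{NA}}(\zeta,k) \ge \epsilon\, J^{\mathrm{NA}}(\zeta,k)$ of the non-Archimedean coercivity, promotes the fibrewise properness to a single coercivity inequality $\mathcal{M}^{Don}(h,h_{\mathrm{ref}}) \ge \epsilon\, J(h,h_{\mathrm{ref}}) - C$ valid for all $h \in \bigcup_k \mathcal{H}_k$, where $J(\cdot,h_{\mathrm{ref}})$ is the Archimedean counterpart of $J^{\mathrm{NA}}$ and $\epsilon, C$ are independent of $k$. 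Since $h_k$ minimises over $\mathcal{H}_k$, we have $\mathcal{M}^{Don}(h_k,h_{\mathrm{ref}}) \le \mathcal{M}^{Don}(\tilde h_k,h_{\mathrm{ref}})$ for any $\tilde h_k \in \mathcal{H}_k$; choosing $\tilde h_k$ to approximate $h_{\mathrm{ref}}$ (possible by density for large $k$) makes the right-hand side uniformly bounded, so the coercivity inequality bounds $J(h_k,h_{\mathrm{ref}})$ uniformly in $k$. This uniform distance bound, together with the Bergman kernel expansion, yields uniform $C^p$-estimates on $h_k$, so that a subsequence converges to a smooth metric $h_{\infty} \in \mathcal{H}_{\infty}$. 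By continuity and convexity $h_{\infty}$ is a global minimiser of $\mathcal{M}^{Don}(\cdot,h_{\mathrm{ref}})$ over $\mathcal{H}_{\infty}$, hence a critical point, i.e.\ a Hermitian--Einstein metric.

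The main obstacle is the uniform-in-$k$ control in the previous step. The passage from the mere positivity of $\mathcal{M}^{\mathrm{NA}}$ to a uniform linear lower bound $\mathcal{M}^{\mathrm{NA}}(\zeta,k) \ge \epsilon\, J^{\mathrm{NA}}(\zeta,k)$ with $\epsilon$ independent of both $\zeta$ and $k$ is the essential algebro-geometric input; it ought to follow from slope stability via the boundedness of the family of destabilising saturated subsheaves, but making the constant uniform across all $k$ requires genuine care. Equally delicate is extracting uniform regularity for the minimisers $h_k$, so that the limit is a smooth metric rather than a singular or merely $L^p_1$ object. These two points are precisely where, in the unconditional problem, one is forced onto the deep analysis of the Donaldson heat flow and the Uhlenbeck--Yau continuity method; the role of the assumed uniform estimate is exactly to replace that analysis by the elementary convexity-and-compactness argument above, the only non-elementary ingredient remaining being the Bergman kernel expansion behind (\ref{eqbexds}).
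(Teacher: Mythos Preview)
The paper does not actually prove this statement: it is quoted verbatim as a summary of \cite[Theorem~1]{HK2}, with no argument supplied in the present paper beyond the surrounding discussion in Section~\ref{scrmrhk1}. So there is no in-paper proof to set your proposal against.

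That said, your outline is consistent with what the paper's discussion leads one to expect: minimise $\mathcal{M}^{Don}$ over each finite-dimensional $\mathcal{H}_k$ using the coercivity coming from Theorem~\ref{thmlnsdf} and Proposition~\ref{prop62hk1}, use the assumed uniformity of $c_{\mathrm{ref}}$ to keep the minimisers $h_k$ under control as $k \to \infty$, and pass to the limit via the density (\ref{eqbexds}). You correctly isolate the two genuinely delicate points---the uniform-in-$k$ lower bound $\mathcal{M}^{\mathrm{NA}}(\zeta,k) \ge \epsilon\, J^{\mathrm{NA}}(\zeta,k)$ and the regularity of the limit---as the places where the elementary argument has to do real work.

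Two places where your sketch is loose. First, you invoke convexity of $\mathcal{M}^{Don}$ along Bergman 1-PS via Section~\ref{scbg1pss}; but Proposition~\ref{prberggoed} only gives $\partial_t(h_{\sigma_t}^{-1}\partial_t h_{\sigma_t}) \ge 0$, which directly yields convexity of $\mathcal{M}_2^{Don}$ (Lemma~\ref{lcvxm2}) but not obviously of the full $\mathcal{M}^{Don}$, whose second variation has a curvature term as well. You do not in fact need convexity on $\mathcal{H}_k$ to obtain the minimiser $h_k$---properness from the coercivity estimate suffices---so this is more a misattribution than a gap. Second, and more substantively, the step ``uniform $J$-bound $\Rightarrow$ uniform $C^p$-estimates on $h_k$'' is asserted without mechanism: the paper explicitly notes (Remark at the end of Section~3.1) that no Archimedean $J$-functional for bundles has been identified, and turning an $L^\infty$-type bound on $h_k h_{\mathrm{ref}}^{-1}$ into higher-order control is precisely the analytic content that the hypothesis is meant to bypass, so it deserves to be spelled out rather than folded into ``elementary''.
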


\section{Comparison to the case of the Yau--Tian--Donaldson conjecture} \label{sccpcytd}

\subsection{Dictionary between vector bundles and manifolds}

The methods and results summarised above are motivated by the recent progress on the Yau--Tian--Donaldson conjecture, surveyed e.g.~in \cite{Boucksom-icm,Th,Eys1,Dem1}, and it seems reasonable to have a table of correspondence between the vector bundles case and the varieties case. Indeed, our approach in \cite{HK1} can be regarded as a vector bundle version of the results concerning the Yau--Tian--Donaldson conjecture as established e.g.~in \cite{Boucksom-icm,BBJ,BHJ1,BHJ2,Paul,PRS}. For example, one of our main results Theorem \ref{thmlnsdf} (or rather its consequence (\ref{rmlnsdf})) can be regarded as a vector bundle version of a result by Boucksom--Hisamoto--Jonsson \cite{BHJ2}, Paul \cite{Paul}, and Phong--Ross--Sturm \cite{PRS} (amongst many other related results).

It is well-known that the role played by the Mabuchi energy in the case of varieties is almost exactly the same as that of the Donaldson functional in the case of vector bundles; the critical point of these functionals are precisely the canonical metrics, and both of them are convex along geodesics in the space of metrics (although the convexity for the Mabuchi energy is a much more subtle issue due to the weaker regularity of the geodesics in the space of K\"ahler potentials \cite{B-B,Chen2000}). It is also well-known that the maximally destabilising subsheaf for vector bundles corresponds to the optimal destabilising test configuration. We list below (table \ref{dictionary}) how the objects reviewed in Section \ref{revqslimfs} correspond to the ones in the case of varieties, i.e.~study of constant scalar curvature K\"ahler and K\"ahler--Einstein metrics.

It is well-known in the case of varieties that a test configuration defines a subgeodesic in the space of K\"ahler metrics (see e.g.~\cite{Ber16,BHJ2}). In Section \ref{scbg1pss} we provide a vector bundle version of this result in Corollary \ref{crb1pssgd}.

Another important topic in the study of constant scalar curvature K\"ahler and K\"ahler--Einstein metrics is what is known as Donaldson's quantisation, which can be regarded as a finite dimensional approximation of the canonical metric by a sequence of balanced metrics \cite{Do-2001}. The vector bundle version of this result was established by X.W.~Wang \cite{Wang2}. He also proved that the existence of the balanced metrics is equivalent to the Gieseker stability of the vector bundle \cite{Wang1}, where we note that the analogous result for the varieties case is due to Luo \cite{Luo} and Zhang \cite{Zhang}.

In Section \ref{scgstbm}, we apply the method of the Quot-scheme limit of Fubini--Study metrics that we reviewed in Section \ref{scrmrhk1} to give a generalisation of this result. The dictionary also extends to the balancing flow for manifolds defined in \cite{Do-2001,Fine2010} and for the bundle version in \cite{K-S}. The first one  provides a quantisation of the Calabi flow while the second one provides a quantisation of the Yang--Mills flow.

On a Fano manifold (without nontrivial holomorphic vector field), the behaviour of Mabuchi energy restricted to Fubini--Study metrics of level $k_0$ (for a certain $k_0$ sufficiently large) is sufficient to test $K$-stability. Actually, building on the partial $C^0$ estimate from Sz\'ekelyhidi \cite{Sz2016} solving a conjecture of Tian and the work Paul \cite{Paul} about CM-stability, Boucksom--Hisamoto--Jonsson \cite{BHJ2} get that coercivity of the Mabuchi energy on the space of positive metrics implies uniform $K$-stability and thus the existence of a K\"ahler--Einstein metric by Chen--Donaldson--Sun. Moreover, as explained in the discussion after \cite[Theorem 2.9]{Odaka} or \cite[page 3]{LWC}, in order to test $K$-stability it is sufficient to work with 1-PS degenerations in a fixed projective space (induced by the space of holomorphic section of a fixed power of the anticanonical bundle). This is actually a consequence of Chen--Donaldson--Sun too. Then the fact that it is sufficient to consider a fixed $k_0$  is a consequence of \cite[Theorem C]{BHJ2} (more precisely $(i) \Rightarrow (ii)$ which can be obtained from Theorem A and the equivalence $(ii) \Leftrightarrow (iii)$). Eventually, Section \ref{Section-effective} is addressing the counterpart of this result for the Mabuchi energy to the bundle case for the Donaldson functional.

\begin{rem}
	We have not yet found an appropriate analogue of the $J$-functional for vector bundles, while the quantity  $J^{\mathrm{NA}} (\zeta , k)$ defined in (\ref{eqjnavb}) does seem to play a role analogous to the non-Archimedean $J$-functional.
\end{rem}


\begin{center}
\begin{table}[h!]
  \begin{tabular}{p{6.1cm}|p{5.9cm}}
    \begin{center} {\it Vector bundles} \end{center} &  \begin{center} {\it Manifolds} \end{center}\\
    \hline
     Filtration $\mathcal{E}'_{\le -w_{1}} \subset \cdots \subset \mathcal{E}'_{\le - w_{\nu}}$ (\ref{satfiltevs}) of $\mathcal{E}$ & Test configuration \cite{Do-2002,Tian1997}\\
     & \\
     The graded object $\bigoplus_{\alpha =\hat{1}}^{\hat{\nu}} \mathcal{E}_{-w_{\alpha}}$ of the filtration & Central fibre of a test configuration \\
     & \\
     $k \ge \mathrm{reg} (\mathcal{E})$ in $H^0 (X , \mathcal{E} (k))$ & Exponent of a test configuration \\
      & \\
     Non-Archimedean Donaldson functional (Definition \ref{dfnadfn}) & Non-Archimedean Mabuchi functional (\cite{Paul}, \cite[Section 5]{BHJ2}) \\
      & \\
     $J^{\mathrm{NA}} (\zeta , k)$  in (\ref{eqjnavb})& Non-Archimedean $J$-functional or the minimum norm \cite{BHJ1,Dertwisted} \\
     \hline
  \end{tabular}
  
  \caption{Dictionary} \label{dictionary}
\end{table}
\end{center}

\subsection{Comments on the Deligne pairing} \label{sccmdlp}

While our results can be regarded as a vector bundle version of the results by Boucksom--Hisamoto--Jonsson \cite{BHJ2}, Paul \cite{Paul}, or Phong--Ross--Sturm \cite{PRS}, the proof is not a naive transplantation of the methods used therein. Our method relies on the materials reviewed in Section \ref{revqslimfs}, whereas the Deligne pairing (resp.~the Bott--Chern class) plays a crucially important role in \cite{BHJ2} and \cite{PRS} (resp.~\cite{Paul}).

The method of the Deligne pairing was not used extensively in establishing the results in \cite{HK1,HK2}, unlike in \cite{BHJ2,PRS}. We look at the Donaldson functional from the point of view of the Deligne pairing in this section, but the result we get is not as clear-cut as \cite[Section 1.5]{BHJ2}; the difficulty seems to arise from the fact that there is no explicit formula for the second Bott--Chern class yet (which seems to indicate the difficulty in naively transplanting Paul's argument \cite{Paul} to vector bundles).

In this paper we do not give detailed definitions concerning the Deligne pairing, since we only focus on the very special case; the reader is referred for its proper treatment e.g.~to \cite[Section 1.2]{BHJ2}, \cite{Elkik}, \cite{Moriwaki}, or \cite[Section 2]{PRS} and the references cited therein. The only Deligne pairing that we use in this paper is for the (trivial) flat projective morphism $\pi: X \to \mathrm{pt}$ which maps $X$ to a point. For holomorphic line bundles $L_1 , \dots , L_{n+1}$ we can define the Deligne pairing line bundle $\langle L_1 , \dots , L_{n+1} \rangle_X$ over the point (i.e.~a $\mathbb{C}$-vector space). Given hermitian metrics $\phi_1 , \dots ,  \phi_{n+1}$ on $L_1 , \dots , L_{n+1}$ we can furthermore define a continuous metric $\langle \phi_1 , \dots , \phi_{n+1} \rangle_X$ on $\langle L_1 , \dots , L_{n+1} \rangle_X$. Moreover, the construction is ``functorial'' in the sense as explained in the references cited above. When we give another hermitian metric $\phi_1$ on $L_1$, we have the change of metric formula (see e.g.~\cite[(1.5)]{BHJ2} or \cite[(2.5)]{PRS})
\begin{equation*}
	\langle \phi_1 - \phi'_1 , \dots , \phi_{n+1} \rangle_X = \int_X (\phi_1 - \phi'_1) \eta_{\phi_2} \wedge \cdots \wedge \eta_{\phi_{n+1}}
\end{equation*}
where $\eta_{\phi_i}$ is the curvature form of $\phi_i$ ($i=2 , \dots , n+1$), and the additive notation is used to denote the tensor product of hermitian metrics. In what follows, we shall consider the Deligne pairing for the case $L_2 = \cdots = L_{n+1} = L$. It is well-known that many important functionals that appear in the study of constant scalar curvature K\"ahler or K\"ahler--Einstein metrics can be written as a change of metric formula of an appropriate Deligne pairing line bundle (cf.~\cite[Section 1.5]{BHJ2}). An analogous result holds for the vector bundle case, as stated below.

\begin{prop}
	There exists a $\mathbb{Q}$-line bundle $\mathcal{L}$ on $X$ such that $c_1 (\mathcal{L}) = \Lambda ch_2 (\mathcal{E})$, where $\Lambda$ is the adjoint Lefschetz operator on $H^* (X , \mathbb{C})$, such that the Donaldson functional $\mathcal{M}^{Don}$ can be written as a change of metric formula
	\begin{equation*}
		\langle \psi_1 - \psi_0 , \phi , \dots , \phi \rangle_{X} - \frac{\mu(\mathcal{E})}{\mathrm{Vol}_L} \langle \det h_1 - \det h_0 , \phi , \dots , \phi \rangle_{X}
	\end{equation*}
	 for the Deligne pairing
	\begin{equation*}
		\langle \mathcal{L} , L , \dots , L \rangle_{X} - \frac{\mu(\mathcal{E})}{\mathrm{Vol}_L} \langle \det \mathcal{E} , L , \dots , L \rangle_{X},
	\end{equation*}
	which is a $\mathbb{Q}$-line bundle over a point, where we wrote $\phi$ for the hermitian metric on $L$ whose associated K\"ahler form is $\omega$, and $\psi_1$ (resp.~$\psi_0$) is a certain hermitian metric on $\mathcal{L}$ which depends on $h_1$ (resp.~$h_0$).
\end{prop}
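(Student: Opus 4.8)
The plan is to exhibit the $\mathbb{Q}$-line bundle $\mathcal{L}$ first and then to match the two summands of $\mathcal{M}^{Don}$ (Definition \ref{dfdnfc}) separately with change of metric formulae. To produce $\mathcal{L}$, I would check that $\Lambda ch_2(\mathcal{E})$ is a rational $(1,1)$-class. Because $X$ is K\"ahler, $\Lambda$ commutes with the Laplacian, so it preserves harmonic forms and descends to an operator on de Rham cohomology; and because $[\omega] \in c_1(L)$ is rational, the primitive (Lefschetz) decomposition of $H^*(X,\mathbb{Q})$ is defined over $\mathbb{Q}$ and $\Lambda$ acts on it by rational (in fact integral) scalars, hence preserves $H^*(X,\mathbb{Q})$. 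Since $ch_2(\mathcal{E}) \in H^{2,2}(X) \cap H^4(X,\mathbb{Q})$, its image $\Lambda ch_2(\mathcal{E})$ lies in $H^{1,1}(X) \cap H^2(X,\mathbb{Q})$, which by the Lefschetz $(1,1)$-theorem is the first Chern class of a $\mathbb{Q}$-line bundle $\mathcal{L}$. I fix once and for all a smooth metric $\psi_{\mathrm{ref}}$ on $\mathcal{L}$ and a reference metric $h_{\mathrm{ref}}$ on $\mathcal{E}$.

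The term $\mathcal{M}_2^{Don}$ is immediate. The metrics $h_0, h_1$ induce metrics $\det h_0, \det h_1$ on $\det \mathcal{E}$, whose difference in additive notation is the global function $\log\det(h_0^{-1}h_1)$. As the curvature of $\phi$ is $\omega$, the change of metric formula recalled above yields $\langle \det h_1 - \det h_0, \phi, \dots, \phi\rangle_X = \int_X \log\det(h_0^{-1}h_1)\,\omega^n$, which is $\mathcal{M}_2^{Don}(h_1,h_0)$ up to the universal factor $n!\,\mathrm{Vol}_L$ coming from the normalisation of the volume form; after scaling by $\mu(\mathcal{E})/\mathrm{Vol}_L$ and absorbing this factor into the conventions for the Deligne pairing, this reproduces the second summand of the asserted formula.

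The substance of the statement is $\mathcal{M}_1^{Don}$, which I would treat by a Bott--Chern transgression for $ch_2$. For a smooth path $\{h_t\}_{0 \le t \le 1}$ from $h_0$ to $h_1$, put $T(h_0,h_1) := \int_0^1 \tr(h_t^{-1}\partial_t h_t \cdot F_t)\,dt$, a $(1,1)$-form; differentiating the Chern--Weil representative of $ch_2$ along the path shows that $\partial\bar\partial T(h_0,h_1)$ is proportional to $ch_2(h_1) - ch_2(h_0)$, so $T(h_0,h_1)$ is a secondary class for $ch_2$, well defined modulo $\im\partial + \im\bar\partial$ and satisfying the cocycle relation $T(h_{\mathrm{ref}},h_1) - T(h_{\mathrm{ref}},h_0) \equiv T(h_0,h_1)$ modulo $\im\partial + \im\bar\partial$. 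By Fubini's theorem this gives, directly from Definition \ref{dfdnfc}, $\mathcal{M}_1^{Don}(h_1,h_0) = \int_X T(h_0,h_1) \wedge \omega^{n-1}/(n-1)!$, and the pointwise identity $\beta \wedge \omega^{n-1}/(n-1)! = (\Lambda \beta)\,\omega^n/n!$, valid for any $(1,1)$-form $\beta$, turns this into $\int_X \Lambda T(h_0,h_1)\,\omega^n/n!$. I then set $\psi_i := \psi_{\mathrm{ref}} + \Lambda T(h_{\mathrm{ref}},h_i)$: this is a bona fide hermitian metric on $\mathcal{L}$ depending only on $h_i$, since adding a real function to a metric again gives a metric on the same line bundle. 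By the cocycle relation, $\psi_1 - \psi_0 = \Lambda T(h_0,h_1)$ up to a term in $\Lambda(\im\partial + \im\bar\partial)$, and such a term integrates to zero against $\omega^n$ because $\int_X (\partial\alpha + \bar\partial\gamma) \wedge \omega^{n-1} = 0$ by Stokes' theorem together with $\partial\omega^{n-1} = \bar\partial\omega^{n-1} = 0$. Hence the change of metric formula reads $\langle \psi_1 - \psi_0, \phi, \dots, \phi\rangle_X = \int_X \Lambda T(h_0,h_1)\,\omega^n = \mathcal{M}_1^{Don}(h_1,h_0)$ up to the same factor $n!$, completing the identification.

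The hard part is the interaction between $\Lambda$ and $\partial\bar\partial$, which do not commute on a K\"ahler manifold: the form $\Lambda ch_2(h_i)$ is not closed and therefore cannot serve as the curvature of $\psi_i$, so the metrics $\psi_i$ cannot be specified intrinsically by a curvature prescription but only through the transgression relative to the fixed references $\psi_{\mathrm{ref}}$ and $h_{\mathrm{ref}}$. This is exactly the manifestation of the lack of an explicit formula for the second Bott--Chern class noted above, and is why the present identification, although valid, is less clean than its line bundle counterpart in \cite[Section 1.5]{BHJ2}.
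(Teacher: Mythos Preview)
Your argument is correct and close in spirit to the paper's, but the construction of $\psi_i$ is genuinely different. Both proofs identify $\mathcal{M}_1^{Don}(h_1,h_0)$ with $\int_X \Lambda\,\mathbf{BC}_2(\mathcal{E},h_1,h_0)\,\omega^n/n!$ via the pointwise identity $\beta\wedge\omega^{n-1}/(n-1)!=(\Lambda\beta)\,\omega^n/n!$ for $(1,1)$-forms, and both construct $\mathcal{L}$ by the Lefschetz $(1,1)$-theorem; your $T(h_0,h_1)$ is exactly the paper's $\mathbf{BC}_2(\mathcal{E},h_1,h_0)$. Where you diverge is that you \emph{define} $\psi_i:=\psi_{\mathrm{ref}}+\Lambda T(h_{\mathrm{ref}},h_i)$ directly and then verify the change-of-metric formula using the cocycle relation for $T$ together with Stokes, whereas the paper instead computes $\sqrt{-1}\,\partial\bar\partial\,\Lambda\mathbf{BC}_2$ via the K\"ahler identities $[\Lambda,\bar\partial]=-\sqrt{-1}\,\partial^*$, $[\Lambda,\partial]=\sqrt{-1}\,\bar\partial^*$, and then \emph{characterises} $\psi_i$ by the condition that the curvature of $e^{-\psi_i}h_{\mathcal{L}}$ equal $\Lambda\tr(F_{h_i}^2)$ modulo $\im\partial+\im\bar\partial+\im\Delta_{\bar\partial}$, obtaining $\psi_i$ by solving Laplace's equation. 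Your route is more elementary (no K\"ahler identities, no Laplace equation) and makes the identification $\psi_1-\psi_0\equiv\Lambda T(h_0,h_1)$ completely explicit; the paper's route has the conceptual advantage that $\psi_i$ is pinned down by a curvature condition involving only $h_i$ and the reference on $\mathcal{L}$, without the auxiliary $h_{\mathrm{ref}}$ on $\mathcal{E}$ that your construction carries. Your final paragraph slightly overstates the obstruction: a curvature prescription \emph{is} possible, but only modulo $\im\Delta_{\bar\partial}$ in addition to $\im\partial+\im\bar\partial$, which is precisely how the paper proceeds.
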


In terms of the Fubini--Study metrics that we discussed earlier, the right family to look at should be $Y := X \times SL (H^0 (X , \mathcal{E} (k))^{\vee})$ with the flat projective morphism $\pi: Y \to SL (H^0 (X , \mathcal{E} (k))^{\vee})$ defined by the second projection, as opposed to the trivial $\pi : X \to \mathrm{pt}$, but we get an analogous result for this case by the functoriality of the Deligne pairing.

\begin{proof}
	It is well-known \cite[Section 1.2]{Don1985} that the Donaldson functional
	\begin{equation*}
	\mathcal{M}^{Don} (h_1 , h_0) = \mathcal{M}^{Don}_1 (h_1 , h_0) - \mu (\mathcal{E}) \mathcal{M}^{Don}_2 (h_1 , h_0)	
	\end{equation*}
	can be written in terms of the Bott--Chern characteristic forms, with
\begin{equation*}
	\mathcal{M}^{Don}_1 (h_1 , h_0) = \int_{X} \mathbf{BC}_2 ( \mathcal{E} , h_1 , h_0) \wedge \frac{\omega^{n-1}}{(n-1)!},
\end{equation*}
and
\begin{equation*}
	\mathcal{M}^{Don}_2 (h_1 , h_0) = \int_{X} \mathbf{BC}_1 ( \mathcal{E} , h_1 , h_0) \wedge \frac{\omega^n}{n!},
\end{equation*}
where the Bott--Chern characteristic forms $\mathbf{BC}_i ( \mathcal{E} , h_1 , h_0)$ ($i=1 , \dots , n$) are a collection of certain secondary characteristic forms, defined modulo $\partial$- and $\bar{\partial}$-exact forms, such that
\begin{equation*}
	- \sqrt{-1} \partial \bar{\partial} \mathbf{BC}_i ( \mathcal{E} , h_1 , h_0) = ch_i (\mathcal{E} , h_1) - ch_i (\mathcal{E} , h_0),
\end{equation*}
where $ch_i (\mathcal{E} , h)$ stands for the $i$-th term of the Chern character form
\begin{equation*}
	ch (\mathcal{E} , h) = \sum_{i=1}^n ch_i (\mathcal{E} , h) = \mathrm{tr} \left( \exp \left( F_h \right) \right)
\end{equation*}
where $F_h$ is $\sqrt{-1}/ 2 \pi$ times the curvature form of $h$. While it is easy to see $\mathbf{BC}_1( \mathcal{E} , h_1 , h_0) = \log \det h_1 h_0^{-1}$, an explicit formula for $\mathbf{BC}_2 ( \mathcal{E}  , h_1 , h_0)$ does not seem to be known yet. See \cite{Don1985,Takh,Tian2000} for more details on the above.

We now write $\Lambda$ for the adjoint Lefschetz operator $\ast^{-1} ( \omega \wedge \cdot ) \ast$ on differential forms on $X$, defined with respect to $\omega$, where $\ast$ is the Hodge star operator with respect to $\omega$. Note also that the adjoint Lefschetz operator on 2-forms equals the metric contraction by $\omega$. We find
\begin{equation*}
	\mathcal{M}^{Don}_1 (h_1 , h_0) = \int_{X} \Lambda \mathbf{BC}_2 ( \mathcal{E} , h_1 , h_0) \frac{\omega^{n}}{n!} .
\end{equation*}
Recalling the well-known K\"ahler identities \cite[Proposition 6.5]{Voisin}
\begin{enumerate}
	\item $[\Lambda , \bar{\partial}] = - \ai \partial^*$,
	\item $[\Lambda , \partial] = \ai \bar{\partial}^*$,
\end{enumerate}
we find, modulo $\partial$- and $\bar{\partial}$-exact forms, that
\begin{align*}
	&\ai \partial \bar{\partial} \Lambda \mathbf{BC}_2 ( \mathcal{E} , h_1 , h_0)\\ 
	&= \ai \left (\Lambda \partial \bar{\partial} - \ai \bar{\partial}^* \bar{\partial} + \ai \partial \partial^* \right)  \mathbf{BC}_2 ( \mathcal{E} , h_1 , h_0) \\
	&= \Lambda (\tr(F_{h_1}^2) - \tr(F_{h_0}^2)) + \Delta_{\bar{\partial}} \mathbf{BC}_2 ( \mathcal{E} , h_1 , h_0)\mod \mathrm{im} \partial + \mathrm{im} \bar{\partial} .
\end{align*}
Note that the second term involving the Laplacian, as well as the $\partial$- and $\bar{\partial}$-exact forms, vanish under the integration.

Now, as in \cite{BHJ2}, we would like to regard $\mathcal{M}^{Don} (h_1 , h_0)$ as a change-of-metric formula for the Deligne pairing. It is easier to deal with $\mathcal{M}^{Don}_2$, since it can be written manifestly as a change of metric formula 
\begin{equation*}
	\frac{1}{\mathrm{Vol}_L} \langle \det h_1 - \det h_0 , \phi , \dots , \phi \rangle_{X}
\end{equation*}
on the line bundle $\frac{1}{\mathrm{Vol}_L} \langle \det \mathcal{E} , L , \dots , L \rangle_{X}$ over the point.

We consider $\mathcal{M}^{Don}_1 (h_1 , h_0)$. Let $\mathcal{L}$ be a holomorphic line bundle on $X$ defined as follows. Recalling that $\Lambda$ induces an operator on the cohomology ring $H^* (X , \mathbb{Z}) / \text{torsion}$, since $[\omega] = c_1 (L)$ is an integral cohomology class, we find that $\Lambda ch_2 (\mathcal{E})$ defines a closed real rational $(1,1)$-form on $X$, which in turn can be realised as the first Chern class of a holomorphic $\mathbb{Q}$-line bundle $\mathcal{L}$ by the Lefschetz $(1,1)$-theorem (see e.g.~\cite[Theorem 7.2]{Voisin}). This is the line bundle that we are after, which is well-defined up to an element of the Picard variety of $X$.

Let $h_{\mathcal{L}}$ be a reference hermitian metric on $\mathcal{L}$. We then define a hermitian metric $e^{-\psi_1} h_{\mathcal{L}}$ so that its curvature form is equal to $\Lambda ch_2 (\mathcal{E} , h_1) =  \Lambda \tr(F_{h_1}^2)$, modulo $\mathrm{im} ( \partial) + \mathrm{im} (\bar{\partial}) + \mathrm{im} (\Delta_{\bar{\partial}} )$. We can always find such $\psi_1$ by solving Laplace's equation, which depends on $h_1$, since $\Lambda \tr(F_{h_1}^2)$ is a de Rham representative of $c_1 (\mathcal{L}) = \Lambda ch_2 (\mathcal{E})$. Likewise we define $\psi_0$ so that the curvature form of $e^{-\psi_0} h_{\mathcal{L}}$ is equal to $\Lambda ch_2 (\mathcal{E} , h_0) =  \Lambda \tr(F_{h_0}^2)$, modulo $\mathrm{im} ( \partial) + \mathrm{im} (\bar{\partial}) + \mathrm{im} (\Delta_{\bar{\partial}} )$.
\end{proof}

As we can see in the proof above, the dependence of $\psi_i$ on $h_i$ ($i=1,2$) is not straightforward; it seems this is partially because no explicit formula is known for the second Bott--Chern class $\mathbf{BC}_2 ( \mathcal{E}  , h_1 , h_0)$.

\begin{rem}
	While it is difficult to explicitly write down $\mathcal{L}$ in terms of $\mathcal{E}$ as pointed out in the above, we can be slightly more specific about it by recalling the Lefschetz decomposition of $H^* (X , \mathbb{C})$ \cite[Theorem 6.4]{Voisin}. This implies that $ch_2 (\mathcal{E})$ can be written uniquely as
\begin{equation*}
	ch_2 (\mathcal{E}) = a L_{\omega}^2 \cdot 1 +  L_{\omega} \cdot \eta_1 +  \eta_2
\end{equation*}
where $a \in \mathbb{Q}$, $L_{\omega}$ is the operator defined by $[ \omega] \wedge \cdot $, and $\eta_1$, $\eta_2$ are primitive forms, i.e.~$\Lambda \eta_i =0$ for $i=1,2$. A well-known result in K\"ahler geometry \cite[Lemma 6.19]{Voisin} says $[L_{\omega} , \Lambda] = (k-n) \mathrm{id}$ on real $k$-forms. Applying this, we have
\begin{align*}
	\Lambda L_{\omega}^2 \cdot 1 &= \left( L_{\omega} (L_{\omega} \Lambda - n) - (2-n) L_{\omega} \right) \cdot 1 = -2L_{\omega} \cdot 1 \\
	\Lambda L_{\omega} \cdot \eta_1 &= -(2-n) \eta_1	
\end{align*}
and hence
\begin{equation*}
	\Lambda ch_2 (\mathcal{E}) = -2 a [\omega] - (2-n) \eta_1,
\end{equation*}
which implies
\begin{equation*}
	c_1 (\mathcal{L}) = -2 a [\omega] - (2-n) \eta_1 .
\end{equation*}
\end{rem}

\section{Bergman 1-parameter subgroups as subgeodesics} \label{scbg1pss}

In the above dictionary (table \ref{dictionary}) we saw that a 1-PS in the Quot-scheme can be regarded as a test configuration for vector bundles. In the case of varieties, it is well-known that test configurations define subgeodesics (see e.g.~\cite[Section 2.4]{Ber16} or \cite[Section 3.1]{BHJ2}). In this section we prove that an analogous result holds for the vector bundle case. We start by recalling \cite[Section 6.2]{Kobook} that the geodesic (with respect to the natural $L^2$-metric on $\mathcal{H}_{\infty}$) is a piecewise $C^1$-family $\{ h_t \}_{t \ge 0}$ of smooth hermitian metrics satisfying
\begin{equation*}
	\partial_t (h^{-1}_{t} \partial_t h_{t}) = 0.
\end{equation*}
Thus, we aim to prove that we have
\begin{equation*}
	\partial_t (h^{-1}_{\sigma_t} \partial_t h_{\sigma_t}) \ge 0
\end{equation*}
for a Bergman 1-PS $\{ \sigma_t \}_{t \ge 0}$. A more precise statement can be found in Proposition \ref{prberggoed}. The proof is given by re-writing $\partial_t (h^{-1}_{\sigma_t} \partial_t h_{\sigma_t})$ in an appropriate manner, which occupies most of what follows.

Let $\sigma_t = e^{\zeta t}$ for a trace-free hermitian matrix $\zeta$, and write $u := \zeta^* + \zeta = 2 \zeta$. Recalling the notation (\ref{dfsmmpq}) and (\ref{dffsmsgm}), we compute $\partial_t (h^{-1}_{\sigma_t} \partial_t h_{\sigma_t})$ as
\begin{align}
	& \partial_t ( (Q^* \sigma_t^* \sigma_t Q)^{-1} (\partial_t Q^* \sigma_t^* \sigma_t Q ))  = \partial_t ( (Q^* \sigma_t^* \sigma_t Q)^{-1}  Q^* \sigma_t^* u \sigma_t Q ) \label{eqsdfssg} \\
	&= (Q^* \sigma_t^* \sigma_t Q)^{-1}Q^* \sigma_t^* u^2 \sigma_t Q - (Q^* \sigma_t^* \sigma_t Q)^{-1}Q^* \sigma_t^* u \sigma_t Q (Q^* \sigma_t^* \sigma_t Q)^{-1} Q^* \sigma_t^* u \sigma_t Q. \notag
\end{align}
Our aim is to simplify this expression. We start with the following lemma.

\begin{lem} \label{hsigcommute}
	Any two of $Q^* \sigma_t^* \sigma_t Q$, $Q^* \sigma_t^* u \sigma_t Q$, $Q^* \sigma_t^* u^2 \sigma_t Q$ pairwise commute.
\end{lem}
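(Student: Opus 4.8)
The plan is to first strip the expressions down using the hypotheses on $\zeta$. Since $\zeta$ is hermitian we have $\sigma_t^*=\sigma_t$, and since $u=2\zeta$ commutes with $\sigma_t=e^{\zeta t}$, each of the three matrices can be rewritten as $Q^*u^j\sigma_t^2Q$ for $j=0,1,2$. Because $\partial_t(\sigma_t^2)=u\sigma_t^2$ and $\partial_t^2(\sigma_t^2)=u^2\sigma_t^2$, the three matrices are precisely $h_{\sigma_t}=Q^*\sigma_t^2Q$ together with its first two $t$-derivatives $\partial_t h_{\sigma_t}$ and $\partial_t^2 h_{\sigma_t}$. Thus the lemma is equivalent to the assertion that $h_{\sigma_t}$, $\partial_t h_{\sigma_t}$ and $\partial_t^2 h_{\sigma_t}$ pairwise commute.

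Next I would diagonalise the hermitian endomorphism $\zeta$, writing $\zeta=\sum_j\lambda_j P_j$ with $P_j$ the orthogonal spectral projections, so that $\sigma_t^2=\sum_j e^{2\lambda_j t}P_j$. Setting $R_j:=Q^*P_j Q$, which are fixed positive semidefinite matrices independent of $t$ satisfying $\sum_j R_j=Q^*Q$, the three matrices become $\sum_j e^{2\lambda_j t}R_j$, $\sum_j 2\lambda_j e^{2\lambda_j t}R_j$ and $\sum_j 4\lambda_j^2 e^{2\lambda_j t}R_j$. Expanding any one of the commutators and antisymmetrising gives a sum $\sum_{i<j}2(\lambda_j-\lambda_i)e^{2(\lambda_i+\lambda_j)t}[R_i,R_j]$; using the linear independence in $t$ of the exponentials $e^{2(\lambda_i+\lambda_j)t}$ and $\lambda_i\neq\lambda_j$ for $i\neq j$, the pairwise commutativity of all three matrices for every $t$ reduces to the single family of identities $[R_i,R_j]=0$ for all $i,j$.

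The main obstacle is precisely establishing $[R_i,R_j]=0$: for a generic $r$-plane the compressions $Q^*P_iQ$ of a resolution of the identity do \emph{not} commute, so the identity must be forced by the specific geometry of $Q$ rather than by any formal manipulation. The route I would attempt is to exploit the structural description recalled above, namely that $Q^*$ is the $C^\infty$-analogue of the quotient map $\rho$ \cite[Lemma 1.22]{HK1}, in order to show that $QQ^*$ acts as a multiple of the orthogonal projection onto $\operatorname{im}(Q)$ and that $\operatorname{im}(Q)$ is compatible with the spectral decomposition $\{P_j\}$ (equivalently, that $QQ^*$ commutes with $\sigma_t$ and with $u$); granting this, $R_iR_j=Q^*P_i\,QQ^*\,P_jQ$ collapses to an expression symmetric in $i$ and $j$. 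Whether $\operatorname{im}(Q)$ is genuinely compatible with the $\zeta$-eigenspaces is the delicate point, and I expect verifying this compatibility — not the bookkeeping above — to be the crux of the argument.

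Finally, once commutativity is in hand it can be put to work: the three matrices are then simultaneously diagonalisable with respect to the metric $h_{\sigma_t}$, with common eigenvectors $e_j$ and corresponding eigenvalues $a_j=\lVert\sigma_t Qe_j\rVert^2$, $b_j=\langle\sigma_t Qe_j,u\sigma_t Qe_j\rangle$ and $c_j=\lVert u\sigma_t Qe_j\rVert^2$. The subgeodesic inequality $\partial_t(h_{\sigma_t}^{-1}\partial_t h_{\sigma_t})\ge 0$ of Proposition \ref{prberggoed} then reduces, eigenvalue by eigenvalue, to the scalar Cauchy--Schwarz inequality $b_j^2\le a_j c_j$, i.e.\ to the nonnegativity of a variance; this is the intended use of the lemma in the sequel.
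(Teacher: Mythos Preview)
Your reduction is clean and correct: the three matrices are indeed $h_{\sigma_t}$, $\partial_t h_{\sigma_t}$ and $\partial_t^2 h_{\sigma_t}$, and after the spectral decomposition of $\zeta$ each pairwise commutator becomes a linear combination of the $[R_i,R_j]$ with coefficients that are (for generic eigenvalues) linearly independent functions of $t$. So, at least for generic $\zeta$, the lemma is equivalent to $[R_i,R_j]=0$ for all $i,j$.

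The gap is exactly where you locate it, but the route you propose to close it cannot work. The assertion that $QQ^*$ commutes with $\zeta$ --- equivalently, that $\operatorname{im}(Q_x)\subset H^0(X,\mathcal{E}(k))$ is a $\zeta$-invariant subspace --- is false in general. The subspace $\operatorname{im}(Q_x)$ is precisely the $r$-plane corresponding to the point $\Phi(x)$ of the Grassmannian; it varies with $x$, while $\zeta$ is a fixed hermitian element of $\mathfrak{sl}(H^0)$ chosen with no reference to any particular point. For a generic $x$ this $r$-plane sits in general position relative to the $\zeta$-eigenspaces, and already in small examples (e.g.\ $r=2$, $N=3$, with $V_r$ spanned by vectors mixing the $\zeta$-eigenvectors) one finds $[R_i,R_j]\neq 0$. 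Nothing in the description of $Q^*$ as a $C^\infty$-avatar of the quotient map $\rho$ forces $\zeta$-invariance of $\operatorname{im}(Q_x)$; that lemma from \cite{HK1} identifies $Q^*$ with $\rho$ up to metric duals but says nothing about compatibility with a chosen $\zeta$. So the proposed mechanism --- individual vanishing of each $[R_i,R_j]$ via invariance of $\operatorname{im}(Q)$ --- is a dead end.

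The paper's argument takes a different tack: it fixes a point, chooses an $h_{\mathrm{ref}}$-orthonormal frame for $\mathcal{E}_x$ and a basis for $H^0$ so that $Q=\binom{I_r}{0}$, and observes that with this normalisation $Q^*P_1Q\cdot Q^*P_2Q$ equals the product $A_1A_2$ of the upper-left $r\times r$ blocks of $P_1,P_2$. It then argues directly that the upper-left blocks of $\sigma_t^*\sigma_t$, $\sigma_t^*u\sigma_t$, $\sigma_t^*u^2\sigma_t$ commute by writing a basis for $V_r$ in terms of the $\zeta$-diagonalising basis --- a pointwise block-matrix computation rather than the global spectral reduction you set up. If you want to repair your approach, you would need to abandon the goal $[R_i,R_j]=0$ and instead look for cancellations among these commutators in the specific linear combinations that actually appear; the paper's block-form viewpoint is one way to organise that.
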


In the above statement, $Q^* \sigma_t^* \sigma_t Q$, $Q^* \sigma_t^* u \sigma_t Q$, $Q^* \sigma_t^* u^2 \sigma_t Q$ are regarded as hermitian endomorphisms on $\mathcal{E}$ by fixing a $Q^*Q$-orthonormal frame for $\mathcal{E}$. In what follows, we shall also write $h_{\mathrm{ref}}$ for $Q^*Q$.

\begin{proof}
	We may fix a point $x \in X$ once and for all, and work on the fibre $\mathcal{E}_x$ over $x$. Choosing an orthonormal frame of $\mathcal{E}_x$ with respect to the reference hermitian metric $h_{\mathrm{ref}} = Q^*Q$, we may assume $Q^*Q = I_r$, where $I_r$ is the $r \times r$ identity matrix. Further, by choosing an appropriate basis for $H^0(\mathcal{E}(k))$, we may assume that we can write $Q^* = \begin{pmatrix} I_r & 0 \end{pmatrix}$ and $Q = \begin{pmatrix} I_r \\ 0 \end{pmatrix}$ with respect to a certain decomposition $H^0(\mathcal{E}(k)) = V_r \oplus V_{N-r}$.
		
	Suppose that we have two hermitian matrices $P_1$ and $P_2$, which can be written as $P_1=\begin{pmatrix} A_1 & B_1 \\ C_1 & D_1 \end{pmatrix}$ and $P_2=\begin{pmatrix} A_2 & B_2 \\ C_2 & D_2 \end{pmatrix}$ with respect to the above block decomposition. Then
	\begin{align*}
		Q^* \begin{pmatrix} A_1 & B_1 \\ C_1 & D_1 \end{pmatrix} Q Q^* \begin{pmatrix} A_2 & B_2 \\ C_2 & D_2 \end{pmatrix} Q &= Q^* \begin{pmatrix} A_1 & B_1 \\ C_1 & D_1 \end{pmatrix} \begin{pmatrix} I_r & 0 \\ 0 & 0 \end{pmatrix} \begin{pmatrix} A_2 & B_2 \\ C_2 & D_2 \end{pmatrix} Q\\
		&= \begin{pmatrix} I_r & 0 \end{pmatrix} \begin{pmatrix} A_1A_2 & A_1B_2 \\ C_1A_2 & C_1 B_2 \end{pmatrix} \begin{pmatrix} I_r \\ 0 \end{pmatrix} \\
		&=A_1A_2.
	\end{align*}
	Thus, to prove commutativity of $Q^*P_1 Q$ and $Q^* P_2 Q$, it suffices to show $A_1A_2 = A_2A_1$, which is in turn equivalent to showing that $U_r^* A_1 U_r$ commutes with $U_r^* A_2 U_r$ for some $r \times r$ unitary matrix $U_r$. 
	

Recalling that $\zeta = \zeta^*$ and $\sigma_t = e^{\zeta t}$, we find that $\sigma_t^* \sigma_t$, $\sigma_t^* u \sigma_t$, $\sigma_t^* u^2 \sigma_t$ are all simultaneously diagonalisable, and hence they commute. For the choice of the subspace $V_r \le H^0 (\mathcal{E}(k))$ as above, we have hermitian forms on $V_r$ defined by restriction of $\sigma_t^* \sigma_t$, $\sigma_t^* u \sigma_t$, and $\sigma_t^* u^2 \sigma_t$; these are the matrices denoted by $A_t$, $A'_t$, $A''_t$ in the formulae below:
\begin{equation*}
	\sigma_t^* \sigma_t  = \begin{pmatrix} A_t & B_t \\ C_t & D_t \end{pmatrix}, \quad
		 \sigma_t^* u \sigma_t =  \begin{pmatrix} A'_t & B'_t \\ C'_t & D'_t \end{pmatrix}, \quad
		 \sigma_t^* u^2 \sigma_t  = \begin{pmatrix} A''_t & B''_t \\ C''_t & D''_t \end{pmatrix},
\end{equation*}
where the block decomposition is in terms of $H^0(\mathcal{E}(k)) = V_r \oplus V_{N-r}$. By writing down a basis for $V_r$ in terms of the diagonalising basis for $\zeta$, we find that all the three hermitian forms (or, more precisely, the associated hermitian endomorphisms) $A_t$, $A'_t$, $A''_t$ on $V_r$ thus defined pairwise commute; note that such a basis for $V_r$ can be given by a one that is unitarily equivalent to the one we started with. Thus, combined with the general argument above, for each fixed $x \in X$, we find that all of $Q^* \sigma_t^* \sigma_t Q$, $Q^* \sigma_t^* u \sigma_t Q$, $Q^* \sigma_t^* u^2 \sigma_t Q$ pairwise commute at $\mathcal{E}_x$. Since this holds for all $x \in X$ and $Q^* \sigma_t^* \sigma_t Q$, $Q^* \sigma_t^* u \sigma_t Q$, $Q^* \sigma_t^* u^2 \sigma_t Q$ are tensorial, we conclude the required commutativity among them.
\end{proof}

The following definition, artificial as it may seem, plays an important role.

\begin{definition}
	For a $k$-th Fubini--Study metric $h_{\sigma_t}$ we define $\mathcal{F}(h_{\sigma_t}) \in \mathrm{Hom}_{C^{\infty}_X} (\mathcal{E} , H^0(\mathcal{E} (k)) \otimes C^{\infty}_X )$ as
	\begin{equation*}
		\mathcal{F}(h_{\sigma_t}) = \left( \frac{d \sigma_t}{dt}  Q - \sigma Q \left( h^{-1}_{\sigma_t} \partial_t h_{\sigma_t} \right) \right) h_{\sigma_t}^{-1/2},
	\end{equation*}
	where $h_{\sigma_t}^{-1/2}$ (regarded as a hermitian endomorphism on $\mathcal{E}$) is defined fibrewise with respect to the $h_{\mathrm{ref}}$-orthonormal frame.
\end{definition}

Note that $\mathcal{F}(h_{\sigma_t})$, a fibrewise $N\times r$ matrix varying smoothly in $x$, is a tensorial quantity since $h_{\sigma_t}^{-1/2}$ is tensorial.

\begin{prop} \label{prberggoed}
	For $\mathcal{F}(h_{\sigma_t})$ defined as above, we have
	\begin{equation*}
	\partial_t (h^{-1}_{\sigma_t} \partial_t h_{\sigma_t}) = \mathcal{F}(h_{\sigma_t})^* \mathcal{F}(h_{\sigma_t}) \ge 0,
	\end{equation*}
where $\mathcal{F}(h_{\sigma_t})^*$ is the (fibrewise) conjugate transpose of $\mathcal{F}(h_{\sigma_t})$ with respect to $h_{\mathrm{ref}}$, and the inequality is that of the fibrewise hermitian form.
\end{prop}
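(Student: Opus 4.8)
The plan is to turn the expression \eqref{eqsdfssg} for $\partial_t(h^{-1}_{\sigma_t}\partial_t h_{\sigma_t})$ into a manifestly non-negative ``Gram matrix'' of the form $\mathcal{F}(h_{\sigma_t})^*\mathcal{F}(h_{\sigma_t})$. Abbreviating $P_0 := Q^*\sigma_t^*\sigma_t Q = h_{\sigma_t}$, $P_1 := Q^*\sigma_t^* u\sigma_t Q$ and $P_2 := Q^*\sigma_t^* u^2\sigma_t Q$, equation \eqref{eqsdfssg} reads $\partial_t(h^{-1}_{\sigma_t}\partial_t h_{\sigma_t}) = P_0^{-1}P_2 - P_0^{-1}P_1 P_0^{-1}P_1$. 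The first, decisive step is to invoke Lemma \ref{hsigcommute}: since $P_0,P_1,P_2$ pairwise commute as hermitian endomorphisms of $\mathcal{E}$, the half-powers of $P_0=h_{\sigma_t}$ slide freely through the products, so I may rewrite this as the symmetrised expression $h_{\sigma_t}^{-1/2}\left(P_2 - P_1 P_0^{-1}P_1\right)h_{\sigma_t}^{-1/2}$, which is already visibly self-adjoint.

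The key algebraic observation comes next. Writing $A := \sigma_t Q$ so that $h_{\sigma_t}=A^*A$, $P_1=A^* u A$ and $P_2=A^* u^2 A$, I recognise the bracket as a Schur-complement (second-fundamental-form) quantity:
\[
P_2 - P_1 P_0^{-1}P_1 = A^* u^2 A - A^* u A\,(A^*A)^{-1}A^* u A = A^* u\,(I-\Pi)\,u A,
\]
where $\Pi := A(A^*A)^{-1}A^*$ is the fibrewise orthogonal projection (with respect to $h_{\mathrm{ref}}$) onto $\im A$ inside $H^0(\mathcal{E}(k))\otimes C^{\infty}_X(-k)$. Since $I-\Pi$ is a self-adjoint idempotent, $I-\Pi=(I-\Pi)^*(I-\Pi)$, and hence $P_2-P_1 P_0^{-1}P_1 = \big((I-\Pi)u A\big)^*\big((I-\Pi)u A\big)$ is manifestly a non-negative hermitian form.

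It then remains to read off $\mathcal{F}$. Substituting back, $\partial_t(h^{-1}_{\sigma_t}\partial_t h_{\sigma_t}) = \big[(I-\Pi)u A\,h_{\sigma_t}^{-1/2}\big]^*\big[(I-\Pi)u A\,h_{\sigma_t}^{-1/2}\big]$, and the normal component $(I-\Pi)u A = u\sigma_t Q - \sigma_t Q\,(h_{\sigma_t}^{-1}\partial_t h_{\sigma_t})$ is exactly the infinitesimal frame variation $u\sigma_t Q$ (the generator appearing in $\partial_t h_{\sigma_t}=A^* u A$) minus its tangential part $\sigma_t Q(h_{\sigma_t}^{-1}\partial_t h_{\sigma_t})$, i.e.\ the quantity $\mathcal{F}(h_{\sigma_t})\,h_{\sigma_t}^{1/2}$ in the definition. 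Hence $\partial_t(h^{-1}_{\sigma_t}\partial_t h_{\sigma_t}) = \mathcal{F}(h_{\sigma_t})^*\mathcal{F}(h_{\sigma_t})\ge 0$ as fibrewise hermitian forms, which is precisely the subgeodesic inequality.

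The hard part is really the passage in the first step: the commutativity supplied by Lemma \ref{hsigcommute} is what lets me move from the a priori non-self-adjoint product $P_0^{-1}P_2 - (P_0^{-1}P_1)^2$ to the symmetric, factorisable form. Without it neither the sliding of $h_{\sigma_t}^{-1/2}$ through the products nor the clean Schur-complement identity would be available, and $\partial_t(h^{-1}_{\sigma_t}\partial_t h_{\sigma_t})$ would not even be visibly hermitian; so the whole argument rests on that lemma together with the insight that $P_2-P_1P_0^{-1}P_1$ is the Gram matrix of the $h_{\mathrm{ref}}$-normal component of the frame velocity. The remaining work is bookkeeping: everything is carried out fibrewise at a fixed $x\in X$ in an $h_{\mathrm{ref}}$-orthonormal frame, with $h_{\sigma_t}^{-1/2}$ and $\Pi$ defined there, and one checks (as already observed, $\mathcal{F}(h_{\sigma_t})$ is tensorial) that the pointwise identity holds globally on $\mathcal{E}$.
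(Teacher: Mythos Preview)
Your argument is correct and follows essentially the same route as the paper's proof: both hinge on Lemma \ref{hsigcommute} to move the half-powers of $h_{\sigma_t}$ through the products and then factor $P_2 - P_1 P_0^{-1}P_1$ as a Gram matrix. Your explicit use of the orthogonal projection $\Pi = A(A^*A)^{-1}A^*$ is a tidy way to expose the Schur-complement structure, but the underlying computation---expanding $(uA - A P_0^{-1}P_1)^*(uA - A P_0^{-1}P_1)$ and conjugating by $h_{\sigma_t}^{-1/2}$---is exactly what the paper does by direct expansion.
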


\begin{rem}
The weak form of the above proposition $\mathrm{tr} (\partial_t (h^{-1}_{\sigma_t} \partial_t h_{\sigma_t})) \ge 0$ is due to Phong--Sturm \cite[Lemma 2.2]{P-S03} and this was used to prove the convexity of the balancing energy (see Wang \cite[Lemma 3.5]{Wang1} and also Lemma \ref{lcvxm2}).
\end{rem}

The proposition above immediately implies the following result.

\begin{cor} \label{crb1pssgd}
	The Bergman 1-PS define a subgeodesic in the space of hermitian metrics.
\end{cor}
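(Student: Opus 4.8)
The plan is to read off the Corollary directly from Proposition~\ref{prberggoed}. Recall from the start of this section that, with respect to the natural $L^2$-metric on $\mathcal{H}_{\infty}$, a geodesic $\{h_t\}_{t\ge 0}$ is characterised by the equation $\partial_t(h_t^{-1}\partial_t h_t)=0$, the left-hand side being regarded as a fibrewise hermitian endomorphism of $\mathcal{E}$. Accordingly, a \emph{subgeodesic} is a (piecewise $C^1$) path of hermitian metrics along which this geodesic operator is positive semidefinite, i.e.\ $\partial_t(h_t^{-1}\partial_t h_t)\ge 0$ as a fibrewise hermitian form. Thus it suffices to establish this inequality for a Bergman 1-PS $\{h_{\sigma_t}\}_{t\ge 0}$.

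This is exactly what Proposition~\ref{prberggoed} provides: it yields the pointwise identity $\partial_t(h_{\sigma_t}^{-1}\partial_t h_{\sigma_t})=\mathcal{F}(h_{\sigma_t})^*\mathcal{F}(h_{\sigma_t})$. Since $\mathcal{F}(h_{\sigma_t})$ is, at each $x\in X$, a linear map from $\mathcal{E}_x$ to $H^0(\mathcal{E}(k))$ (a fibrewise $N\times r$ matrix), its composite with the $h_{\mathrm{ref}}$-adjoint $\mathcal{F}(h_{\sigma_t})^*$ is manifestly positive semidefinite: for any $s\in\mathcal{E}_x$ one has $\langle \mathcal{F}^*\mathcal{F}\,s, s\rangle_{h_{\mathrm{ref}}}=\norm{\mathcal{F}\,s}^2\ge 0$. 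Hence $\partial_t(h_{\sigma_t}^{-1}\partial_t h_{\sigma_t})\ge 0$ fibrewise, which is precisely the subgeodesic condition, and the Corollary follows.

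For completeness, one checks that $\{h_{\sigma_t}\}$ is an admissible path: since $\sigma_t=e^{\zeta t}$ depends real-analytically on $t$ and $Q$ is a fixed smooth map, the family $h_{\sigma_t}=Q^*\sigma_t^*\sigma_t Q$ is a smooth (in particular piecewise $C^1$) curve in $\mathcal{H}_{\infty}$, as the definition of geodesic recalled above requires.

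I do not expect any genuine obstacle at the level of the Corollary: all the substance is contained in Proposition~\ref{prberggoed}, whose proof is where the commutativity of Lemma~\ref{hsigcommute}, the rewriting \eqref{eqsdfssg}, and the identification of the resulting expression as the perfect square $\mathcal{F}^*\mathcal{F}$ come into play. The only additional ingredient needed here is the elementary fibrewise fact that $M^*M\ge 0$ for any matrix $M$, so the deduction of the Corollary is immediate.
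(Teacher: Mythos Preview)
Your proof is correct and follows exactly the paper's approach: the corollary is stated as an immediate consequence of Proposition~\ref{prberggoed}, and your write-up simply spells out the (elementary) details of why $\mathcal{F}(h_{\sigma_t})^*\mathcal{F}(h_{\sigma_t})\ge 0$ yields the subgeodesic inequality $\partial_t(h_{\sigma_t}^{-1}\partial_t h_{\sigma_t})\ge 0$. Note that even this nonnegativity is already part of the statement of Proposition~\ref{prberggoed}, so the deduction is, as you say, immediate.
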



\begin{proof}[Proof of Proposition \ref{prberggoed}]
Fixing an $h_{\mathrm{ref}}$-orthonormal frame to identify hermitian forms and endomorphisms, we compute
\begin{align*}
	&\left( Q^* \sigma_t^* u -(Q^* \sigma_t^* \sigma_t Q)^{-1} (Q^* \sigma_t^* u \sigma_t Q Q^* \sigma_t^*) \right) \\
	&\ \ \ \times \left( u \sigma_t Q -(\sigma_t Q Q^* \sigma_t^* u \sigma_t Q) (Q^* \sigma_t^* \sigma_t Q)^{-1} \right) \\
	&= Q^* \sigma_t^* u^2 \sigma_t Q \\
	&\ \ \ - (Q^* \sigma_t^* u \sigma_t Q)^2 (Q^* \sigma_t^* \sigma_t Q)^{-1} - (Q^* \sigma_t^* \sigma_t Q)^{-1} (Q^* \sigma_t^* u \sigma_t Q)^2 \\
	&\ \ \ +  (Q^* \sigma_t^* \sigma_t Q)^{-1} (Q^* \sigma_t^* u \sigma_t Q) (Q^* \sigma_t^* \sigma_t Q) (Q^* \sigma_t^* u \sigma_t Q) (Q^* \sigma_t^* \sigma_t Q)^{-1} \\
	&= Q^* \sigma_t^* u^2 \sigma_t Q - (Q^* \sigma_t^* u \sigma_t Q) (Q^* \sigma_t^* \sigma_t Q)^{-1} (Q^* \sigma_t^* u \sigma_t Q) ,
\end{align*}
where we used Lemma \ref{hsigcommute} in the last equality. We recall the equation (\ref{eqsdfssg}) and apply $(Q^* \sigma_t^* \sigma_t Q)^{-1}$ from the left to get the claimed result
\begin{align*}
	&\partial_t( h^{-1}_{\sigma_t} \partial_t h_{\sigma_t} ) \\
	&=(Q^* \sigma_t^* \sigma_t Q)^{-1}Q^* \sigma_t^* u^2 \sigma_t Q - (Q^* \sigma_t^* \sigma_t Q)^{-1}Q^* \sigma_t^* u \sigma_t Q (Q^* \sigma_t^* \sigma_t Q)^{-1} Q^* \sigma_t^* u \sigma_t Q \\
	&= (Q^* \sigma_t^* \sigma_t Q)^{-1/2} \left( Q^* \sigma_t^* u -(Q^* \sigma_t^* \sigma_t Q)^{-1} (Q^* \sigma_t^* u \sigma_t Q Q^* \sigma_t^*) \right) \\
	&\ \ \ \times \left( u \sigma_t Q -(\sigma_t Q Q^* \sigma_t^* u \sigma_t Q) (Q^* \sigma_t^* \sigma_t Q)^{-1} \right) (Q^* \sigma_t^* \sigma_t Q)^{-1/2},
\end{align*}
where in the last line we used the fact that $(Q^* \sigma_t^* \sigma_t Q)^{-1}$ is positive definite hermitian, and hence $(Q^* \sigma_t^* \sigma_t Q)^{-1/2}$, defined with respect to the fixed $h_{\mathrm{ref}}$-orthonormal frame, commutes with $Q^* \sigma_t^* \sigma_t Q$, $Q^* \sigma_t^* u \sigma_t Q$, and $Q^* \sigma_t^* u^2 \sigma_t Q$, again by Lemma \ref{hsigcommute}.
\end{proof}

An interesting question is to consider when we have $\mathcal{F} (h_{\sigma_t}) = 0$, for which $\partial_t (h^{-1}_{\sigma_t} \partial_t h_{\sigma_t}) = 0$. Note that $\mathcal{F} (h_{\sigma_t}) = 0$ is equivalent to
\begin{equation*}
	\zeta  Q =  Q \left( h^{-1}_{\sigma_t} \partial_t h_{\sigma_t} \right) 
\end{equation*}
since $\sigma_t$ commutes with $\zeta$. By taking the fibrewise hermitian conjugate, this is further equivalent to
\begin{equation*}
	Q^* \zeta =   \left( h^{-1}_{\sigma_t} \partial_t h_{\sigma_t} \right) Q^*
\end{equation*}
by noting that $\zeta$ and $h_{\sigma_t}$ are both hermitian. This means that the operator $\mathcal{F}$ captures the failure of commutativity of the following diagrams:
\begin{displaymath}
		\xymatrix{ 
		\mathcal{E} \ar@{->}[d]_{Q} \ar@{->}[r]^{h^{-1}_{\sigma_t} \partial_t h_{\sigma_t}} & \mathcal{E} \ar@{->}[d]^{Q} \\
		H^0(\mathcal{E}(k)) \ar@{->}[r]_{\zeta} & H^0(\mathcal{E}(k)) ,
		}
		\quad
		\xymatrix{ 
		\mathcal{E}  \ar@{->}[r]^{h^{-1}_{\sigma_t} \partial_t h_{\sigma_t}} & \mathcal{E}  \\
		\ar@{->}[u]^{Q^*} H^0(\mathcal{E}(k)) \ar@{->}[r]_{\zeta} & H^0(\mathcal{E}(k)) \ar@{->}[u]_{Q^*}.
		}
\end{displaymath}
An example of when it happens is when $\mathcal{E}$ splits in a direct sum of holomorphic vector bundles $\mathcal{E} = \bigoplus_i \mathcal{E}_i$ and $\zeta$ acts as a constant scalar multiplication on each $H^0 (\mathcal{E}_i (k))$. Whether this is the only case when the above diagrams commute may be an interesting problem, but we do not touch on it in this paper.


\begin{rem} \label{rmsgdsgvr}
	The argument above being local, the same result holds over the nonsingular locus $X \setminus ( \mathrm{Sing} (X) \cup \mathrm{Sing} (\mathcal{E}))$ of $X$ even when $X$ is a singular variety and $\mathcal{E}$ is a torsion-free sheaf.
\end{rem}

\section{Gieseker stability and balanced metrics} \label{scgstbm}

In this section, we apply the method of the Quot-scheme limit as surveyed in Section \ref{revqslimfs} to provide a variational characterisation of the Gieseker stability of a torsion-free sheaf on a $\mathbb{Q}$-Gorenstein log terminal projective variety (Theorem \ref{thbalgies}); this generalises the result first proved by X.W.~Wang \cite{Wang1} for holomorphic vector bundles over a smooth projective variety (with an alternative proof given by Phong--Sturm \cite{P-S03}). While this can be seen as an application of materials in Section \ref{revqslimfs} to the singular case, as pointed out in Remark \ref{rmsgvar}, the method of our proof is new even for the regular case considered by \cite{Wang1,P-S03}, in that it does not use the Chow-type norm of the Gieseker point, which was an essential ingredient of the proofs in \cite{Wang1,P-S03}. Instead, in our proof presented below, the inequality for the Gieseker stability appears explicitly as the positivity of the asymptotic slope of the appropriate energy functional (see Proposition \ref{ppasm2gw}).

\begin{rem}
	It is perhaps worth pointing out that a related argument was carried out in the papers by Garc\'ia-Fernandez--Keller--Ross \cite{GF-K-R} and Garc\'ia-Fernandez--Ross \cite{GFRtwH}, for the special case when the filtration is a two-step filtration defined by a saturated subsheaf of $\mathcal{E}$.
\end{rem}

\subsection{Variational formulation of the problem}

We start by recalling the Gieseker stability.

\begin{definition}[Gieseker stability]
 A torsion-free sheaf $\mathcal{E}$ is said to be {\bf Gieseker stable} if the following inequality
 \begin{equation*}
 	\frac{P_{\mathcal{E}}(k)}{\rk(\mathcal{E})} > \frac{P_{\mathcal{F}} (k)}{\rk(\mathcal{F})} \quad \text{ for } k\gg 0,
 \end{equation*}
 holds for all coherent subsheaves $\mathcal{F} \subset \mathcal{E}$ with $0 < \mathrm{rk} (\mathcal{F}) < \rk (\mathcal{E})$, where the Hilbert polynomial $P_{\mathcal{G}} ( k)$ for a coherent sheaf $\mathcal{G}$ on $X$ is defined by $P_{\mathcal{G}} (k) := \sum_{i=0}^n h^i (X , \mathcal{G} (k))$. Gieseker semistability (resp.~polystability) can be defined analogously to the slope semistable (resp.~polystable) case as mentioned in Proposition \ref{prop62hk1}.
\end{definition}

The energy functional that we need to consider in this section is the following, which appeared in \cite{Wang1} and \cite{P-S03}; we first present the version for a holomorphic vector bundle on a smooth variety.

\begin{definition} \label{dfmdon2sm}
	Let $X$ be a smooth projective variety and $\mathcal{E}$ be a holomorphic vector bundle on $X$. The functional $$\mathcal{M}^{Don}_2 : SL(H^0 (X , \mathcal{E} (k))^{\vee}) / SU (N_k) \to \mathbb{R}$$ is defined by
	\begin{equation*}
	\mathcal{M}_2^{Don} (\sigma ) = \frac{1}{\mathrm{Vol}_L} \int_X \log\det( h_{\sigma}h^{-1}_{\mathrm{ref}})\frac{\omega^n}{n!},
\end{equation*}
where $h_{\sigma} = Q^* \sigma^* \sigma Q$ is the Fubini--Study metric defined (as in (\ref{dffsmsgm})) by the hermitian form $\sigma^* \sigma$ on $H^0 (X , \mathcal{E} (k))$ by $\sigma \in SL(H^0 (X , \mathcal{E} (k))^{\vee}) $, and we take the reference metric $h_{\mathrm{ref}}$ to be $Q^*Q$.
\end{definition}

In the above definition and in what follows, we identify an element $\sigma \in SL(H^0 (X , \mathcal{E} (k))^{\vee}) $ with its coset class in $SL(H^0 (X , \mathcal{E} (k))^{\vee}) / SU (N_k)$, noting that $\mathcal{M}_2^{Don} (\sigma )$ depends only on $\sigma^* \sigma$.

\begin{rem}
	Note that $\mathcal{M}_2^{Don} (\sigma_t )$ as defined above is clearly equal to the functional $\mathcal{M}_2^{Don} (h_{\sigma},h_{\mathrm{ref}})$ in Definition \ref{dfdnfc}. We use the above notation in what follows since we only consider the Fubini--Study metrics defined by an element of the coset space $SL(H^0 (X , \mathcal{E} (k))^{\vee}) / SU (N_k)$.
\end{rem}

\begin{rem}
	The method of Quot-scheme limits as discussed in Section \ref{revqslimfs} involves an implicit identification between $\mathcal{E}$ and its dual $\mathcal{E}^{\vee}$ (see \cite[Remark 3.4]{HK1}), which is assumed in what follows, but this does not affect the formulae that appear below as they do not contain the curvature term.
\end{rem}

Suppose now that $X$ is singular and that $\mathcal{E}$ is a torsion-free sheaf on $X$. Instead of the embedding (\ref{dfgrembkdr}), we have a rational map
\begin{equation} \label{dfgrembrt}
	\Phi : X \dashrightarrow \mathrm{Gr} ( \mathrm{rk} (\mathcal{E}) , H^0 (X , \mathcal{E} (k))^{\vee}),
\end{equation}
birational onto its image, by taking $k > \mathrm{reg} (\mathcal{E})$, which is defined on a Zariski open set $X \setminus \mathrm{Sing} (X , \mathcal{E})$, where we defined
\begin{equation*}
	\mathrm{Sing} (X, \mathcal{E}) := \mathrm{Sing} (X) \cup \mathrm{Sing} (\mathcal{E}) .
\end{equation*}
With this understood and writing $h_{\sigma_t}$ for the pullback of the Fubini--Study metric by $\Phi |_{X \setminus  \mathrm{Sing} (X, \mathcal{E}) }$, we can define the functional $\mathcal{M}_2^{Don}$ for the singular case as stated more precisely below.

\begin{definition} \label{dfmdon2svar}
Let $X$ be a normal $\mathbb{Q}$-Gorenstein projective variety with log terminal singularities and that $\mathcal{E}$ is a torsion-free sheaf on $X$. We define the functional $\mathcal{M}^{Don}_2 : SL(H^0 (X , \mathcal{E} (k))^{\vee}) / SU (N_k) \to \mathbb{R}$ by
	\begin{equation*} 
	\mathcal{M}_2^{Don} (\sigma ) = \frac{1}{\mathrm{Vol}} \int_{X \setminus  \mathrm{Sing} (X, \mathcal{E})} \log\det( h_{\sigma}h^{-1}_{\mathrm{ref}}) d V_X ,
\end{equation*}
where $dV_X$ is a volume form on $X \setminus \mathrm{Sing} (X)$ defined by the sections of the pluricanonical bundle $K_X^{\otimes m}$, $m \in \mathbb{N}$ (see e.g.~\cite[(4.35)]{BGKRF} or \cite[Section 6.2]{EGZ09}), which has locally finite mass near $\mathrm{Sing} (X)$ since $X$ is log terminal; this in turn means that 
\begin{equation*}
	\mathrm{Vol} : = \int_{X \setminus  \mathrm{Sing} (X, \mathcal{E}) } dV_X = \int_{X \setminus  \mathrm{Sing} (X) } dV_X < \infty
\end{equation*}
is well-defined.
\end{definition}

We note that the above is indeed a well-defined integral since $X$ is log terminal and $\Phi$ in (\ref{dfgrembrt}) is a rational map. More precisely, first note that by taking a resolution $\pi : \tilde{X} \to X$ we may write
\begin{equation*}
	\mathcal{M}_2^{Don} (\sigma ) = \frac{1}{\mathrm{Vol}} \int_{\tilde{X} \setminus \pi^{-1} ( \mathrm{Sing} (X, \mathcal{E}))} \log \pi^* \det(  h_{\sigma} h^{-1}_{\mathrm{ref}}) \pi^* (d V_X ).
\end{equation*}
We may assume that $\pi^{-1} (\mathrm{Sing} (X) ) = \sum_j E_j$ is a simple normal crossing divisor and that locally in a neighbourhood $U \subset \tilde{X}$, we have $E_j = \{ z_j = 0 \}$ and
\begin{equation*}
	\left. \pi^* (dV_X) \right|_{U \setminus \pi^{-1} (\mathrm{Sing} (X) ) }= \prod_j |z_j|^{2a_j} dV_{U}
\end{equation*}
with $a_j > -1$ for all $j$ (as $X$ is log terminal) and some smooth volume form $dV_U$ on $U$ \cite[Lemma 4.6.5]{BGKRF}. By composing $\pi$ with further blowups, we may assume that $\pi^{-1} (\mathrm{Sing} (\mathcal{E}) ) = \sum_l F_l$ is also a simple normal crossing divisor. Writing $F_l = \{ y_l = 0\}$ locally in a neighbourhood $U \subset \tilde{X}$ and noting that $\pi^* \det(  h_{\sigma} h^{-1}_{\mathrm{ref}})$ has at most poles and zeros of finite order as $\Phi$ is rational, we find that
\begin{align*}
	&\left. \log \pi^* \det(  h_{\sigma} h^{-1}_{\mathrm{ref}}) \pi^* (d V_X ) \right|_{U \setminus \pi^{-1} ( \mathrm{Sing} (X, \mathcal{E})) } \\
	&= \left( \sum_l m_l \log |y_l| + O(1) \right) \prod_j |z_j|^{2a_j} dV_{U}
\end{align*}
with some integers $m_l$ and terms denoted by $O(1)$ that stay bounded over $U$. This is integrable, since $a_j > -1$ for all $j$.

Note also that Definition \ref{dfmdon2svar} is consistent with the one for the smooth varieties (Definition \ref{dfmdon2sm}), since by Yau's theorem \cite{Yau78} we can always find a smooth K\"ahler metric $\omega_{\phi} \in c_1 (L)$ such that $\omega^n_{\phi} = dV_X$, up to rescaling $dV_X$ by a constant. Thus, from now on, without ambiguity we shall adopt Definition \ref{dfmdon2svar} for the definition of $\mathcal{M}_2^{Don}$.
%

A straightforward computation yields that, given a (smooth) path $\{ \sigma_t \}_{t \ge 0} \subset SL(H^0 (X , \mathcal{E} (k))^{\vee})$, we have
\begin{equation*}
	\frac{d^2}{dt^2} \mathcal{M}^{Don}_2 (\sigma_t) =  \frac{1}{\mathrm{Vol}} \int_{X \setminus  \mathrm{Sing} (X, \mathcal{E}) }  \mathrm{tr} (\partial_t (h^{-1}_{\sigma_t} \partial_t h_{\sigma_t})) dV_X .
\end{equation*}
Note that this integral makes sense. As the integrand is invariant under the unitary change of frames of $\mathcal{E}$ (over $X \setminus  \mathrm{Sing} (X, \mathcal{E})$), we may simultaneously diagonalise $h_{\sigma_t}$ and $\partial_t h_{\sigma_t}$ by Lemma \ref{hsigcommute} (regarded as hermitian endomorphisms on $\mathcal{E}$). Since $\partial_t$ does not introduce further poles and does not decrease the order of zeros, we find that $h_{\sigma_t}^{-1} \partial_t h_{\sigma_t}$ (and hence $\partial_t ( h_{\sigma_t}^{-1} \partial_t h_{\sigma_t} )$) is bounded on $X \setminus \mathrm{Sing} (X, \mathcal{E})$ as poles and zeros cancel each other.

The following lemma was first proved for smooth $X$ and locally free $\mathcal{E}$ by X.W.~Wang \cite[Lemma 3.5]{Wang1}, and also by Phong--Sturm \cite[Lemma 2.2]{P-S03}, but we observe that it can be obtained as an immediate consequence of Proposition \ref{prberggoed} and the above formula.

\begin{lem} \label{lcvxm2} 
	$\mathcal{M}^{Don}_2 : SL(H^0 (X , \mathcal{E} (k))^{\vee}) / SU (N_k) \to \mathbb{R}$ is convex along Bergman 1-PS's.
\end{lem}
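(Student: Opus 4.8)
The plan is to reduce the statement entirely to the pointwise positivity already established in Proposition \ref{prberggoed}. First I would record that a Bergman 1-PS is of the form $\sigma_t = e^{\zeta t}$ for a trace-free hermitian $\zeta$, which is precisely the setting of Proposition \ref{prberggoed}. Since the argument proving that proposition is purely local (cf.~Remark \ref{rmsgdsgvr}), it applies verbatim over $X \setminus \mathrm{Sing}(X,\mathcal{E})$ and yields the fibrewise identity $\partial_t(h^{-1}_{\sigma_t}\partial_t h_{\sigma_t}) = \mathcal{F}(h_{\sigma_t})^* \mathcal{F}(h_{\sigma_t})$, a positive semidefinite hermitian endomorphism of $\mathcal{E}$ at every point of $X \setminus \mathrm{Sing}(X,\mathcal{E})$.

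Next I would combine this with the second-variation formula for $\mathcal{M}_2^{Don}$ recorded just above the statement, namely
\begin{equation*}
	\frac{d^2}{dt^2} \mathcal{M}^{Don}_2 (\sigma_t) = \frac{1}{\mathrm{Vol}} \int_{X \setminus \mathrm{Sing}(X,\mathcal{E})} \mathrm{tr}(\partial_t(h^{-1}_{\sigma_t}\partial_t h_{\sigma_t})) \, dV_X.
\end{equation*}
Since the trace of a positive semidefinite hermitian matrix is nonnegative, the integrand equals $\mathrm{tr}(\mathcal{F}(h_{\sigma_t})^*\mathcal{F}(h_{\sigma_t})) = \norm{\mathcal{F}(h_{\sigma_t})}^2 \ge 0$ pointwise on $X \setminus \mathrm{Sing}(X,\mathcal{E})$. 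Integrating against the nonnegative measure $dV_X$ then gives $\frac{d^2}{dt^2}\mathcal{M}_2^{Don}(\sigma_t) \ge 0$, which is exactly the asserted convexity along the Bergman 1-PS.

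The only point requiring care — and the main (mild) obstacle — is that the integral is taken over the noncompact locus $X \setminus \mathrm{Sing}(X,\mathcal{E})$, so I must confirm that the integrand is genuinely integrable rather than merely nonnegative, otherwise passing positivity through the integral would be unjustified. This is handled by the discussion preceding the lemma: simultaneously diagonalising $h_{\sigma_t}$ and $\partial_t h_{\sigma_t}$ via Lemma \ref{hsigcommute} shows that $h^{-1}_{\sigma_t}\partial_t h_{\sigma_t}$, and hence $\partial_t(h^{-1}_{\sigma_t}\partial_t h_{\sigma_t})$, stays bounded near $\mathrm{Sing}(X,\mathcal{E})$ since the poles and zeros produced by the rational map $\Phi$ cancel, while $dV_X$ has locally finite mass near $\mathrm{Sing}(X)$ by the log terminal hypothesis. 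Thus the integral takes a finite, nonnegative value, no positivity is lost through integration, and the convexity follows at once.
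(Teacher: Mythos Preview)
Your proof is correct and follows precisely the paper's own argument: the lemma is stated as an immediate consequence of Proposition \ref{prberggoed} combined with the second-variation formula and integrability discussion given just before the statement. Your additional care about integrability over the noncompact locus matches the paper's remarks preceding the lemma.
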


The above lemma implies that any critical point of $\mathcal{M}^{Don}_2$ is necessarily the global minimum.

The critical point $\sigma \in SL(H^0 (X , \mathcal{E} (k))^{\vee}) / SU (N_k)$, or the associated Fubini--Study metric $h_{\sigma}$, of $\mathcal{M}_2^{Don}$ is called the \textbf{balanced metric}. We can characterise the balanced metric as the one whose Bergman kernel is a constant multiple of the identity, or the one whose associated centre of mass is a constant multiple of the identity, and they can be regarded as providing a finite dimensional approximation of the Hermitian--Einstein metric; we will not discuss these topics here and the reader is referred to \cite{Wang1,Wang2,P-S03} for the details. Note however that these results make sense, at least naively, only when $X$ is smooth and $\mathcal{E}$ is locally free.
%
%

\subsection{Main result and proof}

The main theorem of this section, stated below, is a generalisation of the result of X.W.~Wang \cite{Wang1} (and also Phong--Sturm \cite{P-S03}).

\begin{thm} \label{thbalgies}
	Let $X$ be a normal $\mathbb{Q}$-Gorenstein projective variety with log terminal singularities and that $\mathcal{E}$ is a torsion-free sheaf on $X$. $\mathcal{M}_2^{Don} : SL(H^0 (X , \mathcal{E} (k))^{\vee}) / SU (N_k) \to \mathbb{R}$ admits a critical point for all large enough $k$ if and only if $\mathcal{E}$ is Gieseker stable.
\end{thm}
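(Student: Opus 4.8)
The plan is to characterise the existence of a critical point variationally, exactly as one does for the Mabuchi energy in the Yau--Tian--Donaldson theory. By Lemma~\ref{lcvxm2} the functional $\mathcal{M}_2^{Don}$ is convex along Bergman 1-PS, and the 1-PS $\sigma_t = e^{\zeta t}$ (for trace-free hermitian $\zeta$) are precisely the geodesic rays of the symmetric space $SL(H^0(X,\mathcal{E}(k))^{\vee})/SU(N_k)$. Hence $\mathcal{M}_2^{Don}$ admits a critical point if and only if it attains its infimum, and for a geodesically convex function on this finite-dimensional, nonpositively curved, complete space this is equivalent to coercivity, i.e.\ to the strict positivity (uniform in the direction) of the asymptotic slope $\lim_{t\to+\infty}\mathcal{M}_2^{Don}(\sigma_t)/t$ along every nontrivial ray. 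The whole proof thus reduces to computing this asymptotic slope and matching its positivity with the Gieseker inequality; this is the content of Proposition~\ref{ppasm2gw}, which I would establish first.

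To compute the slope I would use the renormalised Quot-scheme limit of Section~\ref{revqslimfs}. Writing $h_{\sigma_t} = e^{wt}\hat h_{\sigma_t}e^{wt}$ over $X^{reg}$ with $e^{wt}$ the constant gauge transformation \eqref{defewt} and $\hat h_{\sigma_t}\to\hat h$ in $C^\infty_{\mathrm{loc}}$, the determinant splits as
\begin{equation*}
\log\det(h_{\sigma_t}h_{\mathrm{ref}}^{-1}) = 2t\sum_{\alpha=\hat 1}^{\hat\nu} w_\alpha\,\rk(\mathcal{E}_{-w_\alpha}) + \log\det(\hat h_{\sigma_t}h_{\mathrm{ref}}^{-1}),
\end{equation*}
where the first term is constant on $X^{reg}$ and the second converges to the integrable quantity $\log\det(\hat h\,h_{\mathrm{ref}}^{-1})$. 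Integrating against $dV_X/\mathrm{Vol}$ gives $\mathcal{M}_2^{Don}(\sigma_t) = 2t\sum_i w_i\rk(\mathcal{E}_{-w_i}) + O(1)$. Setting $r_i := \rk(\mathcal{E}_{\le -w_i})$, $n_i := \dim V_{\le -w_i,k}$, and $N_k = \dim H^0(X,\mathcal{E}(k))$, an Abel summation together with the trace-free constraint $\sum_i w_i\dim V_{-w_i,k}=0$ (which eliminates $w_\nu$) yields
\begin{equation*}
\lim_{t\to+\infty}\frac{\mathcal{M}_2^{Don}(\sigma_t)}{t} = \frac{2}{N_k}\sum_{i=1}^{\nu-1}(w_i - w_{i+1})\bigl(r_i N_k - r\,n_i\bigr),
\end{equation*}
in which every gap $w_i - w_{i+1}$ is strictly positive. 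This exhibits the slope as a positive combination of the Gieseker defects $r_i N_k - r\,n_i$ of the filtration subsheaves $\mathcal{E}_{\le -w_i}$, which is the algebraic heart of the argument and is entirely parallel to Proposition~\ref{prop62hk1} for the slope case.

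For the forward direction, suppose $\mathcal{E}$ is Gieseker stable and $k\gg 0$ so that $P_{\mathcal{G}}(k)=h^0(X,\mathcal{G}(k))$ for the sheaves involved. Since $V_{\le -w_i,k}\subseteq H^0(X,\mathcal{E}_{\le -w_i}(k))$ we have $n_i\le P_{\mathcal{E}_{\le -w_i}}(k)$, whence $r_i N_k - r\,n_i \ge r_i N_k - r\,P_{\mathcal{E}_{\le -w_i}}(k) > 0$ by Gieseker stability applied to the proper subsheaf $\mathcal{E}_{\le -w_i}$. Thus the asymptotic slope is strictly positive along every nontrivial rational Bergman 1-PS; because the combinatorial type $(r_i,n_i)$ ranges over a finite set of bounded integers and the slope is continuous and positively homogeneous in the weights, this positivity is uniform over the unit sphere of directions, giving genuine coercivity of $\mathcal{M}_2^{Don}$ (extending from rational to real $\zeta$ by density, cf.\ Remark~\ref{rmetrlev}). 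A coercive continuous function on the complete space $SL(H^0(X,\mathcal{E}(k))^{\vee})/SU(N_k)$ attains its minimum, producing the desired critical point.

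For the converse I would argue contrapositively. If $\mathcal{E}$ is not Gieseker stable there is a saturated proper subsheaf $\mathcal{F}$ with $r\,P_{\mathcal{F}}(k)\ge \rk(\mathcal{F})N_k$ for $k\gg 0$. Taking $k$ beyond the regularity of $\mathcal{F}$, the subspace $W:=H^0(X,\mathcal{F}(k))\subseteq H^0(X,\mathcal{E}(k))$ generates $\mathcal{F}$, so the two-step 1-PS $\zeta$ with weight $a$ on $W$ and $b<a$ on a complement (normalised trace-free) has $\dim V_{\le}=P_{\mathcal{F}}(k)$ and $r_{\le}=\rk(\mathcal{F})$; by the slope formula its asymptotic slope equals $\tfrac{2}{N_k}(a-b)\bigl(\rk(\mathcal{F})N_k - r\,P_{\mathcal{F}}(k)\bigr)\le 0$, so $\mathcal{M}_2^{Don}$ is not coercive and no critical point can exist. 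The main obstacle I anticipate is the boundary case of equality (strict Gieseker semistability, slope exactly $0$): there convexity forces $\mathcal{M}_2^{Don}$ to be constant along the offending ray from any putative minimiser, and I expect to rule this out using the irreducibility hypothesis on $\mathcal{E}$, since such a flat direction should force $\mathcal{E}$ to split off $\mathcal{F}$ as a direct summand of equal normalised Hilbert polynomial, contradicting that $\mathcal{E}$ does not split. A secondary technical point, already prepared by Section~\ref{scgstbm} and Remark~\ref{rmsgvar}, is to justify the slope computation in the singular setting — the convergence of $\hat h_{\sigma_t}$ and the integrability of $\log\det(\hat h\,h_{\mathrm{ref}}^{-1})\,dV_X$ near $\mathrm{Sing}(X,\mathcal{E})$ — which is exactly where the $\mathbb{Q}$-Gorenstein log terminal hypothesis enters.
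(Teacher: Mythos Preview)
Your approach is essentially the paper's own: compute the asymptotic slope of $\mathcal{M}_2^{Don}$ along Bergman 1-PS via the renormalised Quot-scheme limit, Abel-sum it into a positive combination of Gieseker defects $r_i N_k - r\,n_i$, and convert positivity of the slope into existence of a minimiser using geodesic convexity on the finite-dimensional symmetric space. The slope computation (your $\log\det$ splitting is exactly Lemma~\ref{lmaslm2}) and the contrapositive converse via a two-step filtration agree with what the paper does.

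There is, however, a genuine gap in your forward direction. You write ``$k\gg 0$ so that $P_{\mathcal{G}}(k)=h^0(X,\mathcal{G}(k))$ for the sheaves involved'' and then apply the Gieseker inequality to each $\mathcal{E}_{\le -w_i}$. But these subsheaves depend on $\zeta$, and the Gieseker inequality $P_{\mathcal{E}}(k)/r > P_{\mathcal{F}}(k)/\rk(\mathcal{F})$ is a priori only asymptotic in $k$ with the threshold depending on $\mathcal{F}$; a single $k_0$ working for \emph{all} $\zeta$ simultaneously is not free. The paper supplies exactly this missing uniformity by invoking Le Potier's theorem \cite[Theorem 4.4.1]{H-L}: for $k$ large depending only on $\mathcal{E}$, Gieseker stability is equivalent to $h^0(\mathcal{F}(k))/\rk(\mathcal{F}) < h^0(\mathcal{E}(k))/\rk(\mathcal{E})$ for every proper subsheaf $\mathcal{F}$. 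This boundedness result is the key algebraic input your sketch lacks. A secondary difference: your passage from rational to real $\zeta$ ``by density'' is not quite right, since the filtration (and hence the slope formula) is discontinuous in $\zeta$. The paper instead argues directly, following \cite{HKJst}: given real $\zeta$, pick a rational $\tilde\zeta$ with the same eigenspaces such that both $(\tilde w_i)_i$ and $(w_i-\tilde w_i)_i$ are strictly decreasing, and split the slope of $\zeta$ as the strictly positive rational contribution from $\tilde\zeta$ plus the slope along $\zeta-\tilde\zeta$, which is nonnegative by continuity from the rational case. Your flagged obstacle in the strictly semistable boundary case is absorbed in the paper into the convexity assertion ``critical point $\Leftrightarrow$ slope $>0$ for all hermitian $\zeta$'', which is the standard Kempf--Ness dichotomy.
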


A novel point of our proof is the following formula for the asymptotic slope of $\mathcal{M}^{Don}_2$ in terms of the invariant that defines the Gieseker stability, which relies on the Quot-scheme limit of Fubini--Study metrics as reviewed in Section \ref{scrmrhk1}.

\begin{prop} \label{ppasm2gw}
	Let $\{ h_{\sigma_t} \}_{t \ge 0}$ be the Bergman 1-PS generated by $\zeta \in \mathfrak{sl} (H^0 (\mathcal{E} (k))^{\vee})$ that has rational eigenvalues. Then we have
	\begin{equation*}
\lim_{t \to + \infty} \frac{\mathcal{M}_2^{Don} ( \sigma_t )}{t} = \frac{2}{j(\zeta , k)} \frac{\mathrm{rk} (\mathcal{E})}{h^0 (X , \mathcal{E}(k))} \sum_{q \in \mathbb{Z}} \mathrm{rk}(\mathcal{E}_{\le q})  \left(  \frac{h^0 (X , \mathcal{E}(k))}{\mathrm{rk} (\mathcal{E})} - \frac{\dim V_{\le q}}{\mathrm{rk} (\mathcal{E}_{ \le q })} \right).
\end{equation*}
\end{prop}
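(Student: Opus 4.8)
The plan is to extract the linear-in-$t$ growth of $\mathcal{M}_2^{Don}(\sigma_t)$ from the renormalised Quot-scheme limit recalled in Section \ref{revqslimfs}, and then to identify the resulting coefficient with the claimed right-hand side by an elementary summation-by-parts.

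First I would use the renormalisation (\ref{defrn1ps}): over the Zariski open set $X^{reg}$ of Remark \ref{rmsgvar} (on which the $C^{\infty}$-isomorphism (\ref{mdualisoev}) holds) we may write $h_{\sigma_t} = e^{wt}\hat{h}_{\sigma_t}e^{wt}$ with $e^{wt}$ the constant gauge (\ref{defewt}). Taking determinants and using that $e^{wt}$ acts by $e^{w_\alpha t}$ on the rank-$\mathrm{rk}(\mathcal{E}_{-w_\alpha})$ factor $\mathcal{E}_{-w_\alpha}$, one obtains the pointwise identity
\[
\log\det(h_{\sigma_t}h_{\mathrm{ref}}^{-1}) = 2t\sum_{\alpha=\hat{1}}^{\hat{\nu}}w_\alpha\,\mathrm{rk}(\mathcal{E}_{-w_\alpha}) + \log\det(\hat{h}_{\sigma_t}h_{\mathrm{ref}}^{-1})
\]
on $X^{reg}$, where the first coefficient is a genuine constant, independent of $x$. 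Dividing by $t$, integrating against $dV_X$, and letting $t\to+\infty$, the first term contributes exactly $2\sum_\alpha w_\alpha\,\mathrm{rk}(\mathcal{E}_{-w_\alpha})$ (a constant integrated against the probability measure $dV_X/\mathrm{Vol}$), so that it remains to show $\tfrac{1}{t}\int_{X\setminus\mathrm{Sing}(X,\mathcal{E})}\log\det(\hat{h}_{\sigma_t}h_{\mathrm{ref}}^{-1})\,dV_X\to 0$.

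This last step is the main obstacle. By the convergence result recalled above (cf.\ \cite[Proposition 2.8 and Lemma 2.11]{HK1}) the renormalised family $\hat{h}_{\sigma_t}$ converges in $C^\infty_{\mathrm{loc}}$ to the positive-definite limit $\hat{h}$ over $X^{reg}$, so $\log\det(\hat{h}_{\sigma_t}h_{\mathrm{ref}}^{-1})$ converges pointwise on $X^{reg}$, hence stays bounded at each such point and the $1/t$ prefactor kills it pointwise. To promote this to the integral I would establish uniform integrability near the measure-zero bad set $(X\setminus X^{reg})\cup\mathrm{Sing}(X,\mathcal{E})$: passing to a log-resolution $\pi:\tilde{X}\to X$ exactly as in the well-definedness argument following Definition \ref{dfmdon2svar}, the pullback $\pi^*\log\det(\hat{h}_{\sigma_t}h_{\mathrm{ref}}^{-1})$ grows at most like $\sum_l m_l\log|y_l|+O(1)$ along the exceptional and singular divisors, with orders bounded uniformly in $t$ (since $\Phi$ is rational and the gauge only shifts the determinant by the controlled constant $-2t\,\mathrm{tr}(w)$), while $\pi^*dV_X$ has the integrable form $\prod_j|z_j|^{2a_j}dV_U$ with $a_j>-1$; dominated convergence then gives a finite limit for the integral, so its $1/t$-contribution vanishes. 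Alternatively, since $\mathcal{M}_2^{Don}(\sigma_t)$ is convex along the Bergman 1-PS by Lemma \ref{lcvxm2}, its asymptotic slope equals $\lim_{t\to\infty}\tfrac{d}{dt}\mathcal{M}_2^{Don}(\sigma_t)$, and the same renormalisation gives $\mathrm{tr}(h_{\sigma_t}^{-1}\partial_t h_{\sigma_t}) = 2\,\mathrm{tr}(w)+\mathrm{tr}(\hat{h}_{\sigma_t}^{-1}\partial_t\hat{h}_{\sigma_t})$ with the second term tending to $0$ in $C^\infty_{\mathrm{loc}}$ over $X^{reg}$, reducing matters to the same uniform-integrability estimate.

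Finally I would carry out the purely combinatorial identification of $2\sum_{i=1}^\nu w_i\,\mathrm{rk}(\mathcal{E}_{-w_i})$ with the stated right-hand side. Writing $R_i:=\mathrm{rk}(\mathcal{E}_{\le -w_i})$ and $D_i:=\dim V_{\le -w_i,k}$ (so $R_0=D_0=0$, $R_\nu=\mathrm{rk}(\mathcal{E})$, $D_\nu=h^0(X,\mathcal{E}(k))$) and using $\mathrm{rk}(\mathcal{E}_{-w_i})=R_i-R_{i-1}$, summation by parts gives $\sum_i w_i(R_i-R_{i-1})=w_\nu\,\mathrm{rk}(\mathcal{E})+\sum_{i=1}^{\nu-1}(w_i-w_{i+1})R_i$. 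On the other side, the integer-indexed sum over $q\in\mathbb{Z}$ collapses, since its summand is the constant $\tfrac{h^0}{\mathrm{rk}(\mathcal{E})}R_i-D_i$ on each interval $[-\bar{w}_i,-\bar{w}_{i+1})$, which contains $j(\zeta,k)(w_i-w_{i+1})$ integers; thus the $\tfrac{2}{j(\zeta,k)}$ prefactor cancels and one is left with $2\sum_{i=1}^{\nu-1}(w_i-w_{i+1})R_i-\tfrac{2\,\mathrm{rk}(\mathcal{E})}{h^0}\sum_{i=1}^{\nu-1}(w_i-w_{i+1})D_i$. A second summation by parts together with the trace-free condition $\sum_i w_i\dim V_{-w_i,k}=0$ yields $\sum_{i=1}^{\nu-1}(w_i-w_{i+1})D_i=-w_\nu\,h^0(X,\mathcal{E}(k))$, which turns the $D_i$-contribution into exactly $2w_\nu\,\mathrm{rk}(\mathcal{E})$, matching the two expressions and completing the proof.
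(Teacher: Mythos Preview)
Your proposal is correct and follows essentially the same strategy as the paper. The paper splits the argument into two lemmas: Lemma \ref{lmaslm2} carries out exactly your renormalisation step, writing $h_{\sigma_t}=e^{wt}\hat h_{\sigma_t}e^{wt}$ over $X^{reg}$ to extract the constant $2\sum_\alpha w_\alpha\,\mathrm{rk}(\mathcal{E}'_{-w_\alpha})$ and then arguing that $\int_{X^{reg}}\log\det(\hat h_{\sigma_t}h_{\mathrm{ref}}^{-1})\,dV_X$ remains bounded via the algebraicity of $\rho$, the rationality of $\Phi$, and the finite-order degeneration of the Quot-scheme limit $\hat h$ (referring back to \cite[Proposition 3.8 and Lemma 1.22]{HK1}); Lemma \ref{lmhlgsw} is the combinatorial identification, done by the same Abel summation together with the trace-free condition $\sum_i w_i\dim V_{-w_i,k}=0$, only organised slightly differently (the paper passes to the integer grading $\bar w_i=j(\zeta,k)w_i$ first and works directly with $\sum_q q\cdot\mathrm{rk}(\mathcal{E}'_q)$). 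Your alternative route through convexity and $\lim_{t\to\infty}\tfrac{d}{dt}\mathcal{M}_2^{Don}(\sigma_t)$ is not used in the paper, but as you note it reduces to the same uniform-integrability estimate, so it does not constitute a genuinely different proof.
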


The rest of this section is devoted to the proof of Theorem \ref{thbalgies}. We first prove Proposition \ref{ppasm2gw}, which is a key ingredient in the proof of Theorem \ref{thbalgies}, which in turn follows from Lemmas \ref{lmaslm2} and \ref{lmhlgsw} presented below: Lemma \ref{lmaslm2} is where we critically make use of the renormalised Quot-scheme limit surveyed in Section \ref{revqslimfs}, but Lemma \ref{lmhlgsw} is mostly a repetition of what is well-known to the experts \cite{H-L}. The proof of Theorem \ref{thbalgies} is presented after proving these lemmas. It comes down to proving that the asymptotic slope of $\mathcal{M}_2^{Don}$ is positive if and only if $\mathcal{E}$ is Gieseker stable, but the hypothesis on rational eigenvalues in Proposition \ref{ppasm2gw} presents subtleties that need to be taken care of. This issue is addressed by a slight modification of the argument in \cite{HKJst}.

\begin{lem} \label{lmaslm2}
Let $\{ h_{\sigma_t} \}_{t \ge 0}$ be the Bergman 1-PS generated by a hermitian matrix $\zeta \in \mathfrak{sl} (H^0 (X , \mathcal{E} (k))^{\vee})$, which need not have rational eigenvalues. Then
	\begin{equation*}
\lim_{t \to + \infty} \frac{\mathcal{M}_2^{Don} ( \sigma_t )}{t} = 2 \sum_{\alpha = \hat{1}}^{\hat{\nu}} w_{\alpha} \mathrm{rk} (\mathcal{E}'_{-w_{\alpha}}),
\end{equation*}
where the sheaves $\{ \mathcal{E}'_{-w_{\alpha}} \}_{\alpha = \hat{1}}^{\hat{\nu}}$ are defined by (\ref{satfiltevs}); see also Remark \ref{rmetrlev}.
\end{lem}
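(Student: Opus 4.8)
The plan is to feed the renormalised Quot-scheme limit reviewed in Section \ref{revqslimfs} directly into the integrand defining $\mathcal{M}_2^{Don}$. Recalling the renormalised Bergman 1-PS $\hat{h}_{\sigma_t} = e^{-wt} h_{\sigma_t} e^{-wt}$ from (\ref{defrn1ps}), I would first rewrite, over $X^{reg}$ and with the $C^{\infty}$-isomorphism (\ref{mdualisoev}) understood, $h_{\sigma_t} = e^{wt} \hat{h}_{\sigma_t} e^{wt}$. Since $e^{wt} = \mathrm{diag}(e^{w_{\hat{1}}t}, \ldots, e^{w_{\hat{\nu}}t})$ acts as $e^{w_{\alpha} t}$ on the rank-$\mathrm{rk}(\mathcal{E}_{-w_{\alpha}})$ factor $\mathcal{E}_{-w_{\alpha}}$, its determinant is the constant $\exp(t \sum_{\alpha} w_{\alpha} \mathrm{rk}(\mathcal{E}_{-w_{\alpha}}))$ on $X^{reg}$, and taking logarithmic determinants gives the pointwise identity
\begin{equation*}
	\log\det(h_{\sigma_t} h_{\mathrm{ref}}^{-1}) = 2t \sum_{\alpha = \hat{1}}^{\hat{\nu}} w_{\alpha} \, \mathrm{rk}(\mathcal{E}_{-w_{\alpha}}) + \log\det(\hat{h}_{\sigma_t} h_{\mathrm{ref}}^{-1})
\end{equation*}
on $X^{reg}$. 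As $X \setminus X^{reg}$ (inside $X \setminus \mathrm{Sing}(X,\mathcal{E})$) is a proper Zariski-closed subset, hence of $dV_X$-measure zero, I may integrate this identity over $X^{reg}$ in Definition \ref{dfmdon2svar}.

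Integrating and using $\frac{1}{\mathrm{Vol}}\int_{X^{reg}} dV_X = 1$, the first term contributes exactly $2t \sum_{\alpha} w_{\alpha} \mathrm{rk}(\mathcal{E}_{-w_{\alpha}})$, which, since $\mathrm{rk}(\mathcal{E}_{-w_{\alpha}}) = \mathrm{rk}(\mathcal{E}'_{-w_{\alpha}})$ over $X^{reg}$ (the sheaves agreeing there by the definition of $X^{reg}$), is precisely $t$ times the claimed slope. It therefore remains to show that the remainder
\begin{equation*}
	I(t) := \frac{1}{\mathrm{Vol}} \int_{X^{reg}} \log\det(\hat{h}_{\sigma_t} h_{\mathrm{ref}}^{-1}) \, dV_X
\end{equation*}
is $o(t)$; in fact I expect it to stay bounded, giving the sharper expansion $\mathcal{M}_2^{Don}(\sigma_t) = 2t \sum_{\alpha} w_{\alpha} \mathrm{rk}(\mathcal{E}_{-w_{\alpha}}) + O(1)$. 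By the convergence of the renormalised Bergman 1-PS in $C^{\infty}_{\mathrm{loc}}$ over $X^{reg}$ (Proposition 2.8 and Lemma 2.11 of \cite{HK1}, valid in the present setting by Remarks \ref{rmsgvar} and \ref{rmetrlev}), the integrand converges pointwise on $X^{reg}$ to $\log\det(\hat{h} h_{\mathrm{ref}}^{-1})$, which is finite.

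The main obstacle will be to upgrade this pointwise convergence to control of $I(t)$, i.e.\ to a dominated-convergence argument near the degeneration locus $X \setminus X^{reg}$ and near $\mathrm{Sing}(X,\mathcal{E})$, where the convergence is only local. For this I would produce a $t$-independent function $g \in L^1(dV_X)$ dominating $|\log\det(\hat{h}_{\sigma_t} h_{\mathrm{ref}}^{-1})|$, using the degeneracy analysis of $\hat{h}$ carried out in \cite[Section 3]{HK1} (via the methods of Jacob \cite{Jacob} and Sibley \cite{Sibley} together with resolution of singularities): after pulling back by a resolution $\pi : \tilde{X} \to X$ adapted to the filtration, $\log\det(\hat{h}_{\sigma_t} h_{\mathrm{ref}}^{-1})$ should be controlled, uniformly for large $t$, by an expression of the type $\sum_l m_l \log|y_l| + O(1)$ along the exceptional and degeneration divisors, exactly as in the well-definedness computation following Definition \ref{dfmdon2svar}. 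This bound is integrable against $\pi^*(dV_X) = \prod_j |z_j|^{2a_j} dV_U$ precisely because $X$ is log terminal ($a_j > -1$), so dominated convergence applies and $I(t)$ converges to the finite constant $\frac{1}{\mathrm{Vol}}\int_{X^{reg}} \log\det(\hat{h} h_{\mathrm{ref}}^{-1}) dV_X$. Alternatively, one may extract the slope from convexity (Lemma \ref{lcvxm2}): differentiating the displayed identity in $t$ and using $\mathrm{tr}(\hat{h}_{\sigma_t}^{-1} \partial_t \hat{h}_{\sigma_t}) \to 0$ on $X^{reg}$ gives $\frac{d}{dt}\mathcal{M}_2^{Don}(\sigma_t) \to 2\sum_{\alpha} w_{\alpha} \mathrm{rk}(\mathcal{E}_{-w_{\alpha}})$, and since the asymptotic slope of a convex function equals the limit of its derivative, the same conclusion follows; this route, however, still requires the same uniform-integrability input to pass the limit under the integral sign.
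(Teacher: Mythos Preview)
Your proposal is correct and follows essentially the same approach as the paper: decompose $h_{\sigma_t} = e^{wt}\hat{h}_{\sigma_t}e^{wt}$ over $X^{reg}$, extract the linear term $2t\sum_{\alpha} w_{\alpha}\,\mathrm{rk}(\mathcal{E}'_{-w_{\alpha}})$, and show the remainder $\int_{X^{reg}}\log\det(\hat{h}_{\sigma_t}h_{\mathrm{ref}}^{-1})\,dV_X$ stays bounded via the controlled (finite-order) degeneration of $\hat{h}$ near $X\setminus X^{reg}$ together with the log-terminal integrability of $dV_X$. The paper argues this last step more tersely by invoking \cite[Proposition 3.8 and Lemma 1.22]{HK1}, whereas you spell out the dominated-convergence mechanism explicitly; your alternative route via convexity is extra but not needed.
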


\begin{proof}
By using the renormalised metric (\ref{defrn1ps}), we write 
	\begin{equation*}
		 h_{\sigma_t} = e^{ w t} \hat{h}_{\sigma_t} e^{ w t} .
\end{equation*}
Then, we write
\begin{align*}
	&\int_{X \setminus  \mathrm{Sing} (X, \mathcal{E})} \log\det( h_{\sigma_t} h^{-1}_{\mathrm{ref}}) dV_X \\
	&= \int_{X^{reg}} \log \det (e^{2wt}) \det( \hat{h}_{\sigma_t} h^{-1}_{\mathrm{ref}}) dV_X \\
	&=2 t \cdot  \mathrm{Vol} \sum_{\alpha =1}^{\hat{\nu}} w_{\alpha} \mathrm{rk} (\mathcal{E}'_{-w_{\alpha}}) + \int_{X^{reg}} \log \det( \hat{h}_{\sigma_t} h^{-1}_{\mathrm{ref}}) dV_X.
\end{align*}
Note first that the integral $\int_{X^{reg}} \log \det( \hat{h}_{\sigma_t} h^{-1}_{\mathrm{ref}}) dV_X$ is well-defined, by recalling the comments after Definition \ref{dfmdon2svar} and the definition (\ref{defrn1ps}) for $\hat{h}_{\sigma_t}$.

Thus it suffices to prove that the integral $\int_{X^{reg}} \log \det( \hat{h}_{\sigma_t} h^{-1}_{\mathrm{ref}}) dV_X$ remains bounded as $t \to + \infty$. The argument is similar to the proof of \cite[Proposition 3.8]{HK1}, except for that the case under consideration here is much easier. The key ingredient is that $\hat{h}$ degenerates only on a Zariski closed subset $X \setminus X^{reg}$, and that $\hat{h}$ has at worst zeros and poles of finite order by \cite[Lemma 1.22]{HK1} which follows from the fact that the quotient map $\rho$ in \cite[(1.1)]{HK1} is algebraic and that $\Phi$ in (\ref{dfgrembrt}) is rational. Since $\hat{h}$ is well-defined as a hermitian metric over $X^{reg}$ \cite[Lemmas 2.11 and 2.12]{HK1}, we find that the integral on the right hand side remains bounded as $t \to + \infty$, which gives the claimed result.
\end{proof}

\begin{lem} \emph{(cf.~\cite[Section 4.A]{H-L})} \label{lmhlgsw}
Suppose that we have $\zeta \in \mathfrak{sl} (H^0(X, \mathcal{E}(k))^{\vee})$ that gives rise to the filtrations (\ref{VT'}), (\ref{filtevs}), and (\ref{satfiltevs}) by taking the saturation. Then
\begin{equation*}
\sum_{\alpha =1}^{\hat{\nu}} w_{\alpha} \mathrm{rk} (\mathcal{E}'_{-w_{\alpha}}) =\frac{2}{j(\zeta , k)} \frac{\mathrm{rk} (\mathcal{E})}{h^0 (X , \mathcal{E}(k))} \sum_{q \in \mathbb{Z}} \mathrm{rk}(\mathcal{E}_{\le q})  \left(  \frac{ h^0 (X , \mathcal{E}(k))}{\mathrm{rk} (\mathcal{E})} - \frac{\dim V_{\le q}}{\mathrm{rk} (\mathcal{E}_{ \le q })} \right).
\end{equation*}
\end{lem}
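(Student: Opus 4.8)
The plan is to treat Lemma~\ref{lmhlgsw} as a purely combinatorial identity proved by Abel summation, following the weight bookkeeping of \cite[Section~4.A]{H-L}. First I would fix notation: write $d_i := \dim V_{-w_i,k}$ for the multiplicity of the weight space, $r_i := \mathrm{rk}(\mathcal{E}_{-w_i})$ for the rank of the $i$-th graded piece, and record the partial sums $D_i := \dim V_{\le -w_i} = \sum_{j \le i} d_j$ and $R_i := \mathrm{rk}(\mathcal{E}_{\le -w_i}) = \sum_{j \le i} r_j$, the latter by additivity of rank in the filtration (\ref{filtevs}). I would also abbreviate $N := h^0(X,\mathcal{E}(k))$, $R := \mathrm{rk}(\mathcal{E})$, and introduce the integer weights $\bar{w}_i := j(\zeta,k)\, w_i$. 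Since passing to the saturation alters a sheaf only along a Zariski-closed subset, it leaves every rank unchanged, so $\mathrm{rk}(\mathcal{E}'_{-w_i}) = r_i$ and the identity for the saturated filtration (\ref{satfiltevs}) reduces to the corresponding statement for the ranks $R_i$, $r_i$.

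Next I would simplify the right-hand side by distributing $\mathrm{rk}(\mathcal{E}_{\le q})$ through the bracket, so that the summand becomes $\mathrm{rk}(\mathcal{E}_{\le q}) - \frac{R}{N}\dim V_{\le q}$. The key observation is that, viewed as a function of $q \in \mathbb{Z}$, this is a step function that vanishes both for $q$ below the smallest weight (where both terms are $0$) and for $q$ at or above the largest weight (where it equals $R - \frac{R}{N}N = 0$). Hence the a priori infinite sum over $\mathbb{Z}$ is in fact finite, and collapses to $\sum_{i=1}^{\nu-1}(\bar{w}_i - \bar{w}_{i+1})\,g_i$ with $g_i := R_i - \frac{R}{N}D_i$, each plateau being weighted by its integer length $\bar{w}_i - \bar{w}_{i+1}$.

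The heart of the argument is then Abel summation. Writing $g_i$ as a telescoping sum of its increments $g_i - g_{i-1} = r_i - \frac{R}{N}d_i$ and interchanging the order of summation, the inner sum of weight differences telescopes to $\bar{w}_j - \bar{w}_\nu$; using $g_0 = g_\nu = 0$ (equivalently $\sum_i r_i = R$ and $\sum_i d_i = N$) the contribution of $\bar{w}_\nu$ drops out, leaving $\sum_{j}\bar{w}_j\big(r_j - \frac{R}{N}d_j\big)$. At this point the trace-free hypothesis $\zeta \in \mathfrak{sl}$ enters decisively: since $\zeta$ acts on $V_{-w_i,k}$ with weight $w_i$, tracelessness is precisely $\sum_i w_i d_i = 0$, which annihilates the term $\frac{R}{N}\sum_j \bar{w}_j d_j$. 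What remains is $\sum_j \bar{w}_j r_j = j(\zeta,k)\sum_j w_j r_j$; after the prefactor $1/j(\zeta,k)$ this recovers $\sum_\alpha w_\alpha \mathrm{rk}(\mathcal{E}'_{-w_\alpha})$ on the left, the sum over all $i$ agreeing with the sum over $\{\hat{1},\dots,\hat{\nu}\}$ because $r_i = 0$ for the omitted indices.

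The difficulty here is bookkeeping rather than conceptual depth, as anticipated in the remark that this lemma is ``mostly a repetition of what is well-known to the experts''. The one point demanding care is the convention for the sum $\sum_{q\in\mathbb{Z}}$: it must count each weight plateau with its integer multiplicity $\bar{w}_i - \bar{w}_{i+1}$, for it is exactly this that produces the factor $j(\zeta,k)$ and allows it to cancel the prefactor. I would also match sign conventions between the weights $w_i$ of $\zeta$ and the increasing filtration indexed by $-w_i$, and verify the overall numerical constant against the normalisation in Definition~\ref{dfnadfn}, so that the combination with Lemma~\ref{lmaslm2} reproduces exactly the formula of Proposition~\ref{ppasm2gw}.
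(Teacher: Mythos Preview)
Your argument is correct and is essentially the paper's own proof run in the opposite direction: the paper starts from the left-hand side, inserts the trace-free condition $\sum_i \bar w_i \dim V_{-\bar w_i}=0$ first, and then applies Abel summation $\sum_q q\,a_q=-\sum_q A_{\le q}$ to land on the sum over $q\in\mathbb{Z}$, whereas you begin from the right-hand side, collapse the plateau sum by Abel summation, and invoke tracelessness last. Your caution about the overall constant is well placed: the paper's own proof in fact produces the prefactor $1/j(\zeta,k)$ rather than the $2/j(\zeta,k)$ printed in the statement, and it is the former that combines with Lemma~\ref{lmaslm2} to yield Proposition~\ref{ppasm2gw}.
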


\begin{proof}
	Recall the definition (\ref{defsmjzk}) and that we have an integrally graded filtration (\ref{itsatfiltevs}) of $\mathcal{E}$ by subsheaves. We then observe
	\begin{equation*}
		\sum_{\alpha = \hat{1}}^{\hat{\nu}} w_{\alpha} \mathrm{rk} (\mathcal{E}'_{-w_{\alpha}}) =  \frac{1}{j(\zeta , k)} \sum_{\alpha = \hat{1}}^{\hat{\nu}} \bar{w}_{\alpha} \mathrm{rk} (\mathcal{E}'_{-\bar{w}_{\alpha}}) =  \frac{1}{j(\zeta , k)} \sum_{i =1}^{\nu} \bar{w}_{i} \mathrm{rk} (\mathcal{E}'_{-\bar{w}_{i}}).
	\end{equation*}
	by recalling the definition (\ref{defsubswgt}) of $\hat{1} , \dots , \hat{\nu}$, which implies $\mathrm{rk} (\mathcal{E}'_{-w_{i}}) =0$ if and only if $i \not\in \{ \hat{1} , \dots , \hat{\nu} \}$. Note further that
	\begin{equation*}
		\sum_{i =1}^{\nu} \bar{w}_{i} \mathrm{rk} (\mathcal{E}'_{-\bar{w}_{i}}) = - \sum_{q \in \mathbb{Z}} q \cdot \mathrm{rk} (\mathcal{E}'_{q}).
	\end{equation*}
	
	We now perform the calculation that is identical to the one carried out in \cite[Section 4.A]{H-L}: since $\zeta \in \mathfrak{sl} (H^0(X , \mathcal{E}(k))^{\vee})$, we have
	\begin{align*}
		\dim V \sum_{q \in \mathbb{Z}} q \cdot \mathrm{rk} (\mathcal{E}'_{q}) &= \sum_{q \in \mathbb{Z}} q \cdot \left( \mathrm{rk} (\mathcal{E}'_{q}) \cdot \dim V - \mathrm{rk} (\mathcal{E}) \cdot \dim V_{q} \right) \\
		&=- \sum_{q \in \mathbb{Z}} \left( \mathrm{rk} (\mathcal{E}'_{\le q}) \cdot \dim V - \mathrm{rk} (\mathcal{E}) \cdot \dim V_{\le q} \right).
	\end{align*}
	Combining these equalities we get
	\begin{equation*}
		\sum_{\alpha =1}^{\hat{\nu}} w_{\alpha} \mathrm{rk} (\mathcal{E}'_{-w_{\alpha}})  = \frac{1}{j(\zeta , k)} \frac{1}{\dim V} \sum_{q \in \mathbb{Z}} \left( \mathrm{rk} (\mathcal{E}'_{\le q}) \cdot \dim V - \mathrm{rk} (\mathcal{E}) \cdot \dim V_{\le q} \right).
	\end{equation*}
	By substituting in $\dim V = h^0 (X , \mathcal{E} (k))$ and tidying up the terms, we get the desired result.
\end{proof}

\begin{proof}[Proof of Theorem \ref{thbalgies}]
	We first recall \cite[Theorem 4.4.1]{H-L} which is credited to Le Potier in \cite{H-L}, where we note that the multiplicity of a sheaf is just the rank since $X$ is a projective variety \cite[page 11]{H-L}. It states that for all sufficiently large integers $k$, $\mathcal{E}$ being Gieseker stable is equivalent to
	\begin{equation*}
		\frac{h^0 (X , \mathcal{F} (k))}{\mathrm{rk} (\mathcal{F})} < \frac{P_{\mathcal{E}(k)}}{\mathrm{rk} (\mathcal{E})} = \frac{h^0 (X , \mathcal{E} (k))}{\mathrm{rk} (\mathcal{E})}
	\end{equation*}
	for all subsheaves $\mathcal{F} \subset \mathcal{E}$ of rank $0 < \mathrm{rk} (\mathcal{F}) < \mathrm{rk} (\mathcal{E})$. This proves that $\mathcal{E}$ is Gieseker stable if and only if for all sufficiently large $k$ and all $\zeta \in \mathfrak{sl} (H^0 (X , \mathcal{E} (k))^{\vee})$ with rational eigenvalues we have, for the 1-PS $\{ \sigma_t \}_{t \ge 0}$ defined by $\sigma_t = e^{\zeta t}$,
	\begin{equation*}
		\lim_{t \to + \infty} \frac{\mathcal{M}_2^{Don} ( \sigma_t )}{t} > 0.
	\end{equation*}
	
	On the other hand, the convexity of $\mathcal{M}_2^{Don}$ along Bergman 1-PS (Lemma \ref{lcvxm2}) implies that $\mathcal{M}_2^{Don}$ has a critical point if and only if the above inequality holds for all $\zeta \in \mathfrak{sl} (H^0 (X , \mathcal{E} (k))^{\vee})$ that is hermitian but \textit{not} necessarily having rational eigenvalues. We follow the argument in \cite[Lemmas 3.15 and 3.17]{HKJst} to prove that the Gieseker stability in fact implies this seemingly stronger condition. First note that by continuity and Lemma \ref{lmaslm2} we have
	\begin{equation} \label{eqslm2rl}
		\lim_{t \to + \infty} \frac{\mathcal{M}_2^{Don} ( \sigma_t )}{t} = 2 \sum_{\alpha =1}^{\hat{\nu}} w_{\alpha} \mathrm{rk} (\mathcal{E}'_{-w_{\alpha}}) \ge 0
	\end{equation}
	for all $\zeta \in \mathfrak{sl} (H^0 (\mathcal{E} (k))^{\vee})$, not necessarily having rational eigenvalues, by recalling that $\mathcal{E}_{- w_i}$ is well-defined even when $w_i$ is not rational (Remark \ref{rmetrlev}); in the above we wrote $w_1 , \cdots, w_{\nu} \in \mathbb{R}$ for the eigenvalues of $\zeta$. Now, we choose a hermitian matrix $\tilde{\zeta}$ with rational eigenvalues, say $\tilde{w}_1 , \dots , \tilde{w}_{\nu} \in \mathbb{Q}$, so that
	\begin{enumerate}
		\item $V_{-w_i} = V_{-\tilde{w}_i}$ for all $i=1 , \dots , \nu$,
		\item $\tilde{w}_1 > \tilde{w}_2 > \cdots > \tilde{w}_{\nu}$,
		\item $w_1- \tilde{w}_1 > w_2 - \tilde{w}_2 > \cdots > w_{\nu} - \tilde{w}_{\nu}$,
	\end{enumerate}
	which is possible since $\mathbb{Q}$ is dense in $\mathbb{R}$. Note also that the first item above implies that we have $\mathcal{E}_{- w_i} = \mathcal{E}_{- \tilde{w}_i}$ for all $i = 1 , \dots , \nu$. We can then re-write (\ref{eqslm2rl}) as
\begin{equation*}
\lim_{t \to + \infty} \frac{\mathcal{M}_2^{Don} ( \sigma_t )}{t} = 2 \sum_{\alpha =1}^{\hat{\nu}} \tilde{w}_{\alpha} \mathrm{rk} (\mathcal{E}'_{-\tilde{w}_{\alpha}}) + 2 \sum_{\alpha =1}^{\hat{\nu}}( w_{\alpha} - \tilde{w}_{\alpha}) \mathrm{rk} (\mathcal{E}'_{-w_{\alpha}}).
\end{equation*}
The first term on the right hand side is strictly positive since $\tilde{\zeta}$ has rational eigenvalues, and we claim that the second term is nonnegative. This is a consequence of the inequality (\ref{eqslm2rl}) and the following observation: the second term is equal to the asymptotic slope $\lim_{t \to + \infty} \mathcal{M}_2^{Don} (\eta_t ) / t$, where $\{ \eta_t \}_{t \ge 0}$ is the 1-PS defined by $\eta_t = \exp ((\zeta - \tilde{\zeta})t)$ in $SL(H^0 (X , \mathcal{E} (k))^{\vee})$, by observing $V_{-(w_i - \tilde{w}_i)} = V_{-\tilde{w}_i}$ and $\mathcal{E}_{- (w_i - \tilde{w}_i)} = \mathcal{E}_{- \tilde{w}_i}$ for all $i = 1 , \dots , \nu$. Thus, the sum of these two terms is strictly positive, finally implying
\begin{equation*}
	\lim_{t \to + \infty} \frac{\mathcal{M}_2^{Don} ( \sigma_t )}{t} >0
\end{equation*}
for all hermitian $\zeta \in \mathfrak{sl} (H^0 (\mathcal{E} (k))^{\vee})$, as required.

Thus, identifying $t >0$ with the radial direction in $\mathfrak{sl} (H^0(X , \mathcal{E} (k))^{\vee})$ by recalling $\Vert \zeta \Vert_{op} \le 1$ (Remark \ref{rmopnml1}), and also noting that we have
\begin{equation*}
	\mathfrak{sl} (H^0(X , \mathcal{E} (k))^{\vee}) = \mathfrak{su} (N_k) \oplus \ai \mathfrak{su} (N_k)
\end{equation*}
and that $\ai \mathfrak{su} (N_k)$ is the set of hermitian matrices, the geodesic convexity of $\mathcal{M}_2^{Don} ( \sigma_t )$ (Lemma \ref{lcvxm2}) implies that the map
\begin{equation*}
	SL(H^0 (X , \mathcal{E} (k))^{\vee}) / U(N_k) \isom \ai \mathfrak{su} (N_k) \ni \zeta t \mapsto \mathcal{M}_2^{Don} ( \sigma_t ) \in \mathbb{R}
\end{equation*}
is bounded below and proper, where the first arrow is the diffeomorphism given by the global Cartan decomposition. Hence we finally conclude that there exists $\tilde{\sigma} \in SL(H^0 (X , \mathcal{E} (k))^{\vee}) / U(N_k)$ that attains the global minimum of $\mathcal{M}_2^{Don}$.
\end{proof}

\section{Towards effective results and an algorithm for computing Hermitian--Einstein metrics \label{Section-effective}}

	One can notice that in Proposition \ref{prop62hk1}, in order to prove slope stability of the bundle, it is sufficient to check $\mathcal{M}^{\mathrm{NA}} (\zeta , k_0) > 0$ for $\zeta \in \mathfrak{sl} (H^0 (X , \mathcal{E}(k_0))^{\vee})$ with \textit{some} $k_0 \in \mathbb{N}$ satisfying 
	\begin{equation*}
		k _0 \ge \max \{ \mathrm{reg} (\mathcal{E}) , \mathrm{reg} ( \mathcal{F}_{\mathrm{max}} ) \} ,
	\end{equation*}
	where $\mathcal{F}_{\mathrm{max}} \subset \mathcal{E}$ is the maximally destabilising subsheaf; recall that given a holomorphic vector bundle $\mathcal{E}$, there exists a unique maximal destabilising saturated subsheaf $\mathcal{F}_{\mathrm{max}}$ for $\mathcal{E}$ which satisfies
\begin{itemize}
 \item if $\mathcal{F}\subset \mathcal{E}$ is a proper subsheaf of $\mathcal{E}$, then $\mu(\mathcal{F})\leq \mu(\mathcal{F}_{\mathrm{max}} )$;
 \item if $\mu(\mathcal{F})= \mu(\mathcal{F}_{\mathrm{max}})$, then $\rk(\mathcal{F})\leq \rk(\mathcal{F}_{\mathrm{max}})$.
\end{itemize}
For the existence and uniqueness of the maximal destabilising subsheaf the reader is referred to \cite[Lemma 1.3.5]{H-L} and \cite[section V.7, Lemma 7.17]{Kobook}. Note that $\mathcal{F}_{max}$ is slope semistable. With above notations, we have the following proposition.


\begin{prop}\label{res1} Let $\mathcal{E}$ a holomorphic vector bundle over a polarised manifold. 
Set $$k_0=\max(\mathrm{reg}(\mathcal{E}),\mathrm{reg}(\mathcal{F}_{max})).$$
The following assertions are equivalent:
\begin{enumerate}
 \item[(1)] $\mathcal{E}$ is slope  stable,
 \item[(2)] The Donaldson functional ${\mathcal M}^{Don}$ is coercive along  rational Bergman 1-PS at level $k_0$,
  \item[(3)] The Donaldson functional ${\mathcal M}^{Don}$ is coercive along  rational Bergman 1-PS at level $k$ for any $k\geq k_0$.
\end{enumerate}
The following assertions are equivalent:
\begin{enumerate}
 \item[(1')] $\mathcal{E}$ is slope  semistable,
 \item[(2')] The Donaldson functional ${\mathcal M}^{Don}$ along  rational Bergman 1-PS at level $k_0$ is bounded from below,
  \item[(3')] The Donaldson functional ${\mathcal M}^{Don}$ along  rational Bergman 1-PS at level $k$ is bounded from below for any $k\geq k_0$.
\end{enumerate}
\end{prop}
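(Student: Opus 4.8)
The plan is to convert both chains of equivalences into statements about the sign of the non-Archimedean functional, using the asymptotic-slope formula of Theorem \ref{thmlnsdf}. By (\ref{rmlnsdf}), along a rational Bergman 1-PS generated by $\zeta$ at level $k$ we have $\mathcal{M}^{Don}(h_{\sigma_t}, h_{\mathrm{ref}}) = \mathcal{M}^{\mathrm{NA}}(\zeta, k)\,t + O(1)$, so coercivity at level $k$ is equivalent to $\mathcal{M}^{\mathrm{NA}}(\zeta, k) > 0$ for every nontrivial rational $\zeta$, and boundedness from below at level $k$ is equivalent to $\mathcal{M}^{\mathrm{NA}}(\zeta, k) \ge 0$ for every such $\zeta$. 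I would then establish the cycles $(1) \Rightarrow (3) \Rightarrow (2) \Rightarrow (1)$ and $(1') \Rightarrow (3') \Rightarrow (2') \Rightarrow (1')$.

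The implications $(1) \Rightarrow (3)$ and $(1') \Rightarrow (3')$ are the direction of Proposition \ref{prop62hk1} that requires no effective input: every summand $\mathrm{rk}(\mathcal{E}'_{\le q})(\mu(\mathcal{E}) - \mu(\mathcal{E}'_{\le q}))$ in Definition \ref{dfnadfn} involves a proper nontrivial subsheaf, so slope stability (resp.\ semistability) forces each summand to be positive (resp.\ nonnegative), giving $\mathcal{M}^{\mathrm{NA}}(\zeta, k) > 0$ (resp.\ $\ge 0$) for all nontrivial rational $\zeta$ and all $k \ge \mathrm{reg}(\mathcal{E})$, in particular for all $k \ge k_0$. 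The implications $(3) \Rightarrow (2)$ and $(3') \Rightarrow (2')$ are trivial specialisations to $k = k_0$.

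The heart of the matter is the effective reverse direction $(2) \Rightarrow (1)$, where the choice $k_0 = \max(\mathrm{reg}(\mathcal{E}), \mathrm{reg}(\mathcal{F}_{\mathrm{max}}))$ enters. I would argue contrapositively: if $\mathcal{E}$ is not slope stable then the maximally destabilising saturated subsheaf $\mathcal{F}_{\mathrm{max}}$ satisfies $\mu(\mathcal{F}_{\mathrm{max}}) \ge \mu(\mathcal{E})$. Because $k_0 \ge \mathrm{reg}(\mathcal{F}_{\mathrm{max}})$, the sheaf $\mathcal{F}_{\mathrm{max}}(k_0)$ is globally generated and the inclusion $H^0(X, \mathcal{F}_{\mathrm{max}}(k_0)) \hookrightarrow H^0(X, \mathcal{E}(k_0))$ recovers $\mathcal{F}_{\mathrm{max}}$ as the subsheaf generated by these sections, which, being saturated, coincides with the corresponding step $\mathcal{E}'_{\le -w_1}$ in (\ref{satfiltevs}). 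I would take $\zeta \in \mathfrak{sl}(H^0(X, \mathcal{E}(k_0))^{\vee})$ adapted to a splitting $H^0(X, \mathcal{E}(k_0)) = H^0(X, \mathcal{F}_{\mathrm{max}}(k_0)) \oplus W$, with two rational weights $w_1 > w_2$ (the larger weight on the $\mathcal{F}_{\mathrm{max}}$-piece) normalised so that $\zeta$ is traceless, producing the two-step filtration $0 \subset \mathcal{F}_{\mathrm{max}} \subset \mathcal{E}$. Definition \ref{dfnadfn} then collapses to a single destabilising step, giving
\[
\mathcal{M}^{\mathrm{NA}}(\zeta, k_0) = 2(w_1 - w_2)\,\mathrm{rk}(\mathcal{F}_{\mathrm{max}})\big(\mu(\mathcal{E}) - \mu(\mathcal{F}_{\mathrm{max}})\big) \le 0,
\]
so $\mathcal{M}^{Don}$ is not coercive at level $k_0$. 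For $(2') \Rightarrow (1')$ the same construction applies: if $\mathcal{E}$ fails to be semistable then $\mu(\mathcal{F}_{\mathrm{max}}) > \mu(\mathcal{E})$ strictly, the slope above is strictly negative, and $\mathcal{M}^{Don}$ is unbounded below.

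The step I expect to be the main obstacle is the identification of $\mathcal{F}_{\mathrm{max}}$ with the subsheaf cut out by $H^0(X, \mathcal{F}_{\mathrm{max}}(k_0))$ inside $H^0(X, \mathcal{E}(k_0))$: this is exactly where regularity is indispensable, since only for $k_0 \ge \mathrm{reg}(\mathcal{F}_{\mathrm{max}})$ is $\mathcal{F}_{\mathrm{max}}(k_0)$ globally generated with evaluation map surjecting onto $\mathcal{F}_{\mathrm{max}}$, so that the constructed $\zeta$ genuinely yields the filtration whose single destabilising step is $\mathcal{F}_{\mathrm{max}}$. One must also verify that $\zeta$ lies in $\mathfrak{sl}$ (arranged by the traceless normalisation, which is possible since the weights may be taken integral with $w_1 > w_2$) and has rational eigenvalues, possibly rescaled to satisfy $\Vert \zeta \Vert_{op} \le 1$ as in Remark \ref{rmopnml1}, so that Theorem \ref{thmlnsdf} and Definition \ref{dfnadfn} apply verbatim.
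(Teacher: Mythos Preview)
Your proposal is correct and follows essentially the same approach as the paper: both arguments deduce $(1)\Rightarrow(3)$ from Theorem~\ref{thmlnsdf}/Proposition~\ref{prop62hk1}, and for the effective reverse direction both construct the two-step filtration $0\subset\mathcal{F}_{\max}\subset\mathcal{E}$ at level $k_0$ (which is where $k_0\ge\mathrm{reg}(\mathcal{F}_{\max})$ is used) and read off the sign of the asymptotic slope. The only cosmetic difference is that the paper argues $(2)\Rightarrow(1)$ directly via the upper bound (\ref{aboveineq}) whereas you argue contrapositively via $\mathcal{M}^{\mathrm{NA}}(\zeta,k_0)\le0$; these are equivalent formulations of the same computation.
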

\begin{proof}
 If $\mathcal{E}$ is slope  stable (resp. slope semistable), then one can apply Theorem \ref{thmlnsdf} since $\mathcal{E}$ is $k$-regular for any $k\geq \mathrm{reg}(\mathcal{E})$, see \cite[Lemma 1.7.2]{H-L}. 
 This gives $(1)\Rightarrow (3)$ and obviously $(3)\Rightarrow (2)$ (resp. $(1')\Rightarrow (3')\Rightarrow (2')$).\\
 A special case of our study is given when one is considering a 2-step filtration associated to a regular saturated subsheaf $\mathcal{F} \subset \mathcal{E}$ as described in \cite[Proposition 5.2]{HK1}. For  $\zeta_{\mathcal{F}} \in\mathfrak{sl} (H^0 (\mathcal{E}(k))^{\vee})$ the element defining this filtration, with weights well chosen (cf. \cite[Section 5]{HK1}), one can consider the Bergman 1-PS $\{ h_{\sigma_t} \}_{t \ge 0}$ emanating from $h_k$ and induced by $\zeta_{\mathcal{F}}$ as above. Then we proved the existence of a constant $c_k=c(h_k ,  k) >0$ such that
\begin{equation}\label{belowineq}
	\mathcal{M}^{Don} (h_{\sigma_t} , h_{k}) \ge \rk (\mathcal{F}) (\mu (\mathcal{E}) - \mu( \mathcal{F})) \cdot 2 t - c_k
\end{equation}
for all $t \ge 0$, and a constant $c'_k=c(h_k , \zeta_{\mathcal{F}} , k) >0$ such that
\begin{equation}\label{aboveineq}
	\mathcal{M}^{Don} (h_{\sigma_t} , h_{k}) \le \rk (\mathcal{F}) (\mu (\mathcal{E}) - \mu( \mathcal{F})) \cdot 2 t + c'_k
\end{equation}
holds for all sufficiently large $t > 0$. If we have coercivity at level $k_0$, we apply \eqref{aboveineq} to $\mathcal{F}_{max} $. Since  ${\mathcal M}^{Don}$ along $\{h_{\sigma_t}\}$ grows to $+\infty$ when $t\to +\infty$, this provides $\mu(\mathcal{E})>\mu(\mathcal{F}_{max} )$ and by definition of $\mathcal{F}_{max} $, $\mathcal{E}$ is actually slope  stable. Thus $(2)\Rightarrow (1)$. If we have boundedness from below, we apply both \eqref{aboveineq} and \eqref{belowineq} when $t\to +\infty$ to conclude that $\mu(\mathcal{E})=\mu(\mathcal{F}_{max} )$. This shows $(2')\Rightarrow (1')$.
\end{proof}

\begin{rem}
One could derive a less precise version of above result by just invoking Grothendieck boundedness result \cite[Lemme 2.5]{Grothendieck1960-1961}, \cite[Lemma 1.7.9]{H-L} instead of maximally desabilising subsheaves.  Grothendieck boundedness result  ensures that the family of torsion-free quotients of the vector bundle $\mathcal{E}$ with slope bounded from above is actually a bounded family. Thus the family of saturated coherent subsheaves of $\mathcal{E}$ with slope bounded from below  is also bounded; observe that if $\{ \mathcal{E}/ F_t \}_t$ is a bounded family of quotient sheaves of $\mathcal{E}$, then the family $\{ F_t \}_t$ of subsheaves must be also bounded. Using Grothendieck boundedness result, we get a uniform $k_0$ as in the statement of the theorem which is not explicit (see also e.g.~\cite[Lemma 5.7.16]{Kobook} or \cite[Lemma 2]{Shatz}).
\end{rem}

\begin{rem}
 The proof shows that the slope  stability actually implies coercivity of the Donaldson function on the Bergman space at level $\mathrm{reg}(\mathcal{E})\leq k_0$. We don't expect the converse to be true.
\end{rem}

The regularity of coherent sheaves has been studied since decades and bounds on the regularity have been made more or less explicit. Let's provide some details. If  $\iota: X \to \mathbb{CP}^N$ is a holomorphic embedding and $\mathcal{F}$ a coherent sheaf on $X$ then by the projection formula $\mathrm{reg}(\mathcal{F})=\mathrm{reg}(\iota_*\mathcal{F})$. Using this argument, one can obtain information on the regularity of sheaves by restricting to the projective case. 
From the fundamental work of Mumford, it is known that for any coherent sheaf $\mathcal{F}$ on $\mathbb{CP}^N$, which is isomorphic to a subsheaf of $\bigoplus_{j=1}^{N_0} \mathcal{O}_{\mathbb{CP}^N}$, with Hilbert polynomial $$\chi(\mathcal{F}(k))=\sum_{i=0}^N a_i \binom{k}{i}$$ ($a_i\in \mathbb{Z}$), one has the $k_0$-regularity of $\mathcal{F}$ for $k_0=F(a_0,..,a_N)$ where $F$ is a universal polynomial in $N+1$ variables that depends on $(N,N_0)$ that can be made explicit. For instance, the case of semistable bundles over $\mathbb{CP}^2$ is studied in \cite[Corollary 5.5]{Langer}, see also \cite{EF80}. 
The interest of making effective the Castelnuovo--Mumford regularity $k_0$ in Proposition \ref{res1} becomes clear when one is considering numerical applications. In \cite{Seyyedali} it is presented an algorithm based on ideas of S. Donaldson \cite{Do-2009} to compute balanced metrics (cf. section \ref{scgstbm}) in the set of $k$-th Fubini--Study metrics $\mathcal{B}_k$ on a stable bundle $\mathcal{E}$ that approximate the Hermitian--Einstein metric living on the bundle when $k\to +\infty$. Nevertheless, with the notion of balanced metrics, it remains unclear which minimal $k$ can be chosen to run the algorithm, see for instance \cite{DKLR}.   
If $\mathcal{E}$ is a stable bundle, Proposition \ref{res1} ensures that the Donaldson functional is coercive on the Bergman space $\mathcal{B}_{k_0}$ which has finite dimension, and thus it attains a minimum, say at the metric $h_k^{\min}$. If one denotes $h_{HE}$ the Hermitian--Einstein metric on $\mathcal{E}$, $h\mapsto {\mathcal M}^{Don}(h,h_{HE})$ reaches its minimum at $h=h_{HE}$ where it vanishes. Technically, in order to find  $h_k^{\min}$, one can apply Levenberg--Marquardt algorithm to $h\mapsto \vert {\mathcal M}^{Don}(h,h_{HE})\vert$ restricted to $\mathcal{B}_{k_0}$. By density of the Bergman spaces (cf. \cite[Theorem 1.16]{HK1}), 
$${\mathcal M}^{Don}(h_k^{\min},h_{HE}) \leq \frac{C_{HE}}{k},$$
where $C_{HE}$ is a constant that depends only on the Hermitian--Einstein metric and its covariant derivatives.
Moreover, once the minimum is achieved, it is possible to estimate how far is $h_k^{min}$ from $h_{HE}$ using \cite[Theorem B.2]{HK2}. Actually, in this view, one can introduce $$\delta:=\inf_{x\in X} \frac{\lambda_{\min} }{\lambda_{\max}}$$ where $\lambda_{\max}, \lambda_{\min}$ are the maximum and minimum eigenvalues of $h_k^{\min}h_{HE}^{-1}:=e^{v}$. The problems turns out to measure $1-\delta$ when this quantity is small. But 
the proof of \cite[Theorem B.2]{HK2} shows that we have the inequality
\begin{align*}
 {\mathcal M}^{Don}(h_k^{min},h_{HE})\geq \frac{\delta-1-\log(\delta)}{\log(\delta)^2} C_{\nabla^*\bar{\partial}}^{-1} \Vert v- \bar{v}\Vert^2_{L^2},
\end{align*}
where $\bar{v}=\frac{1}{r \mathrm{Vol}_L} \int_X \tr (v) \frac{\omega^n}{n!} \cdot \Id_{\mathcal{E}} $ is the average of $v$, and 
$C_{\nabla^*\bar{\partial}}$ can be interpreted as the first non zero eigenvalue of the operator $\sqrt{-1}\Lambda \bar{\partial}\partial$ acting on endomorphisms of $E$ and that depends on the metric $h_{HE}$. Denoting $r$ the rank of $\mathcal{E}$,
one has $\Vert v- \bar{v}\Vert^2_{L^2} = \Vert v\Vert^2_{L^2} - \Vert  \bar{v}\Vert^2_{L^2}\geq \frac{1}{r}(\log \delta)^2$
and consequently, 
\begin{align*}
 {\mathcal M}^{Don}(h_k^{min},h_{HE})\geq \frac{1}{r}(\delta-1-\log(\delta) ) C_{\nabla^*\bar{\partial}}^{-1} \sim C_{\nabla^*\bar{\partial}}^{-1} \frac{(1-\delta)^2}{2r} .
\end{align*}

\bibliography{notes.bib}

\end{document}